\definecolor{webgreen}{rgb}{0,.5,0}
\definecolor{webbrown}{rgb}{.8,0,0}
\definecolor{emphcolor}{rgb}{0.95,0.95,0.95}
\ifpdf \hypersetup{pdftex,
	pdfstartview=FitH, 
	bookmarksopen=true,
	bookmarksnumbered=true
} \else \hypersetup{dvips} \fi
\newcommand {\ud}{{\rm d}}
\DeclareMathOperator{\hol}{C}
\numberwithin{equation}{section}
\theoremstyle{plain}
\newtheorem{theorem}{Theorem}[section]
\newtheorem{proposition}[theorem]{Proposition}
\newtheorem{lemma}[theorem]{Lemma}
\newtheorem{assump}[theorem]{Assumption}
\theoremstyle{remark}
\newtheorem{remark}[theorem]{Remark}
\newcommand {\R}{\mathbb{R}}
\newcommand {\E}{\mathbb{E}}
\newcommand{\diff}{{\rm d}}
\newcommand{\lev}{L\'{e}vy }
\title{Optimality of refraction strategies for a constrained dividend problem}
\author[M. Junca]{Mauricio Junca$^{(1)}$}
\thanks{$(1)$ Department of Mathematics, Universidad de los Andes, Bogot\'a, Colombia. Email: mj.junca20@uniandes.edu.co.}
\author[H. Moreno-Franco]{Harold Moreno-Franco$^{(2)}$}
\thanks{$(2)$ Laboratory of Stochastic Analysis and its Applications, National Research University Higher School of Economics, Moscow, Russia. Email:hmoreno@hse.ru.}
\author[J.L. P\'erez]{Jos\'e Luis P\'erez$^{(3)}$}
\thanks{$(3)$ Department of Probability and Statistics, Centro de Investigaci\'on en Matem\'aticas A.C.,Guanajuato, Mexico. Email: jluis.garmendia@cimat.mx.}
\author[K. Yamazaki]{Kazutoshi Yamazaki$^{(4)}$}
\thanks{$(4)$ Department of Mathematics, Faculty of Engineering Science, Kansai University, Osaka, Japan. Email: kyamazak@kansai-u.ac.jp.}
\begin{document}
\maketitle	

\begin{abstract}
We consider de Finetti's problem for spectrally one-sided L\'evy risk models with control strategies that are absolutely continuous with respect to the Lebesgue  measure. Furthermore, we consider the version with a constraint on the time of ruin.  To characterize the solution to the aforementioned models, we first solve the optimal dividend problem with a terminal value at ruin and show the optimality of threshold strategies. Next, we introduce the dual Lagrangian problem and show that the complementary slackness conditions are satisfied, characterizing the optimal Lagrange multiplier. Finally, we illustrate our findings with a series of numerical examples.\\
	\noindent \small{\noindent  AMS 2010 Subject Classifications: 60G51, 93E20, 91B30 \\ 
		\textbf{Keywords:} Dividend payment; Optimal control; Ruin time constraint; Spectrally one-sided L\'evy processes; refracted L\'evy processes; scale functions.
	}
	
\end{abstract}

\section{Introduction} 

In de Finetti's optimal dividend problem, the aim is to maximize the total expected discounted dividends accumulated until ruin. Intuitively, as the risk of ruin must be considered, dividends should be paid only when there is sufficient amount of surplus available. 
With this conjecture and under the assumption of stationary increments of the underlying process (in the \lev cases), the optimality of a barrier strategy that pays out any amount above a certain barrier has been pursued in various papers. Because the resulting controlled process becomes a classical reflected process, existing fluctuation theoretical results have been efficiently applied to solve explicitly the problem, at least under suitable conditions. See, among others, Avram et al.\ \cite{AvPaPi07} for the spectrally negative case and Bayraktar et al.\ \cite{BKY} for the spectrally positive case.
 
Despite these important works, there are several disputes about the classical model in the sense that the set of admissible strategies is too large and contains those that are in reality impossible to implement.  In particular, under the barrier strategy that is shown to be optimal, ruin must occur almost surely, and this is rather an undesirable conclusion. For these reasons, in the past decade, several extensions have been considered so as to obtain a more realistic model, by considering more restricted sets of admissible strategies and  modifications to the objective function so as to incorporate more directly the risk of ruin.
  
Motivated by these, in this paper we focus on the model with the absolutely continuous condition on the dividend strategy and additional condition on the time of ruin. We consider both cases driven by spectrally negative and positive \lev processes.
 
Regarding the absolutely continuous condition, it is assumed that the rate at which dividends are paid is bounded. More specifically, the dividend strategy must be absolutely continuous with respect to the Lebesgue measure with its density bounded by a given constant. Analogously to the barrier strategy that is optimal in the classical case, the \emph{threshold strategy} -- that pays dividends at the maximal rate as long as the surplus is above a certain fixed level  -- is optimal in this case. For a spectrally negative L\'evy surplus process, Kyprianou et al.\ \cite{KLP} showed the optimality of the threshold strategy under a completely monotone assumption on the \lev measure. The spectrally positive \lev model has been solved by Yin et al.\ \cite{Yin}. In both cases,  the optimally controlled process becomes the \emph{refracted \lev process} of Kyprianou and Loeffen \cite{KyLo}, and the fluctuation identities for this process can be used efficiently to solve de Finetti's problem under the absolutely continuous condition. 

Following the recent work of Hern\'andez et al. \cite{HJM17}, we study the case in which the longevity feature is added to the problem  by considering a constraint on the time of ruin. The longevity aspect of the firm remained as a separate problem; see \cite{schmidli2002} for a survey on this matter. Despite efforts to integrate both features \cite{Jostein03,ThonAlbr,Grandits}, it was not until very recently  a successful solution to a model that actually accounts for the trade-off between performance and longevity was presented. Hern\'andez and Junca \cite{HJ15} considered de Finetti's problem in the setting of Cram\'er-Lundberg reserves with i.i.d. exponentially distributed jumps adding a constraint to the expected time of ruin of the firm. 

The contribution of this paper is twofold:
\begin{enumerate}
	\item We first solve the optimal dividend problem \emph{with a terminal value at ruin} under the absolutely continuous assumption. We solve this problem for the spectrally negative \lev case under the assumption that the \lev measure has a completely monotone density and also for the general spectrally positive \lev case. In both models we show that a threshold strategy is optimal (see Theorems \ref{L.V.1} and \ref{L.V.2}). The optimal refraction level as well as the value function are concisely expressed in terms of the scale function. Its optimality is confirmed by a verification lemma.
	\item We then use these results to solve the constrained dividend maximization problem over the set of strategies such that the Laplace transform of the ruin time must be bounded by a given constant.  This is an extension of \cite{HJM17} under the absolutely continuous assumption. Theorem \ref{main.1} shows the result when the reserves are modeled by a spectrally negative L\'evy process with a completely monotone L\'evy density and Theorem \ref{main.2} for the general dual model. 
\end{enumerate}

The rest of the paper is organized as follows. In Section \ref{formProblem}, we formulate the problem. In Section \ref{section_scale_functions}, we present an overview of scale functions and some fluctuation identities related to spectrally negative \lev processes and their respective refracted processes. In Section  \ref{optimal_dividend_tv_sn}, we solve the optimal dividend problem with a terminal cost and the absolutely continuous assumption for the case of a spectrally negative L\'evy process with a completely monotone L\'evy density.  In Section \ref{const.1}, we extend the results to solve the constrained dividends problem. In Section \ref {specpos} we solve the same problems for the spectrally positive case. Finally, in Section \ref{numerical_section}, we give some numerical results.
\section{Formulation of the problem}\label{formProblem}

In this section, we formulate the constrained de Finetti's problem driven by a spectrally negative \lev process. The spectrally positive \lev process is its dual and a slight modification is only needed to formulate the spectrally positive case (see Section \ref{specpos}).

\subsection{Spectrally negative \lev processes }
Recall that a spectrally negative L\'evy process is a stochastic process, which has c\`adl\`ag paths and stationary and independent increments such that there are no positive discontinuities. To avoid degenerate cases in the forthcoming discussion, we shall additionally exclude from this definition the case of monotone paths. This means that we are not interested in the case of a deterministic increasing linear drift or the negative of a subordinator. Henceforth we assume that $X=\{X_t: t\ge 0\}$ is a spectrally negative L\'evy process defined on a probability space $(\Omega,\mathcal{F},\mathbb{P})$ with L\'evy triplet given by $(\gamma, \sigma, \Pi)$, where $\gamma \in \R$, $\sigma\ge 0$ and $\Pi$ is a measure concentrated on $(0,\infty)$ satisfying
\begin{equation}\label{L.1}
\int_{(0,\infty)}(1\wedge z^2)\Pi({\rm d}z)<\infty.
\end{equation}

The Laplace exponent of $X$ is given by
\begin{equation} \label{laplace_exponent}
\psi(\lambda)= \log \mathbb{E} \left[ \mathrm{e}^{\lambda X_1} \right] = \gamma \lambda +\frac12\sigma^2\lambda^2 -
\int_{(0,\infty)} \left( 1-\mathrm{e}^{-\lambda z} -\lambda z\mathbf{1}_{\{0<z\leq1\}} \right)\Pi(\mathrm{d}z),
\end{equation}
which is well defined for $\lambda\geq0$. Here $\mathbb{E}$ denotes the expectation with respect to $\mathbb{P}$. The reader is noted that, for convenience, we have arranged the representation of the Laplace exponent in such a way that the support of the L\'evy measure is positive even though the process experiences only negative jumps. As a strong Markov process, we shall endow $X$ with probabilities $\{\mathbb{P}_x : x\in\mathbb{R}\}$ such that under $\mathbb{P}_x$ we have $X_0 = x$ with probability one. Note that $\mathbb{P}_0= \mathbb{P}$ and $\mathbb{E}_0= \mathbb{E}$.

It is well-known that $X$ has paths of bounded variation if and only if $\sigma=0$ and $\displaystyle\int_{(0,1]} z\Pi(\mathrm{d}z)<\infty$. In this case $X$ can be written as
\begin{equation*}
X_t=ct-S_t, \,\, t\geq 0,
\end{equation*}
where $c:=\gamma+\displaystyle\int_{(0,1]}z\Pi(\mathrm{d}z)$ and $\{S_t:t\geq0\}$ is a drift-less subordinator. Note that we must have 
$c>0$, since 
it is assumed that $X$ does not have
monotone paths. 

The process $X$ is a generalization of what is known in the actuarial mathematics literature as the classical Cram\'er-Lundberg risk process. This process is often used to  model the surplus wealth of an insurance company.

\subsection{Admissible strategies}
Let $D=\{ D_t: t \geq 0 \}$ be a dividend strategy, meaning that it is a  non-negative and non-decreasing process adapted to the completed and right continuous filtration $\mathbb{F}:=\{\mathcal{F}_t : t\geq 0\}$ of $X$.  Here, for each fixed $t\geq0$, the quantity $D_t$ represents the cumulative dividends paid out up to time $t$ by the insurance company whose risk process is modeled by $X$. The controlled L\'evy process becomes $U^D=\{U^D_t= X_t - D_t :t\geq 0\}$ and we write
\begin{align*}
\tau^D := \inf\{t>0: U^D_t < 0 \},
\end{align*}
 for the time at which ruin occurs when the dividend payments are taken into account.  

In this work we are interested in adding a constraint to the dividend processes. Specifically, we will only work with absolutely continuous strategies of bounded rate, i.e., 
\begin{equation*}
D_t=\int_0^{t}d(s)\mathrm{d}s,\ t\geq0,
\end{equation*}
such that  the \textit{dividend rate}  $d$ satisfies $0\leq d(t)\leq \delta$, for $t\geq0$, where $\delta>0$ is a ceiling rate. We will denote by $\Theta$ the family of admissible strategies satisfying the conditions mentioned above.

\subsection{Constrained de Finetti's problem and  its dual} \label{subsection_constrained_de_finetti}
The expected net present value under the dividend policy $D\in\Theta$ with discounting at rate $q>0$ and initial capital $x\geq 0$ is given by 
\begin{equation*}
v^{D}(x) = \mathbb{E}_x\left[\int_0^{\tau^{D}} \mathrm{e}^{-qt} \mathrm{d}D_t\right].
\end{equation*}
The dividend problem, originally considered by de Finetti, asks to maximize the expected net present value of dividend payments over the set of strategies $\Theta$. 

Now, as studied in \cite{HJM17}, we are interested in addressing a modification of this problem by adding a restriction to the dividend process $D$, which is given by the following constraint
\begin{equation*}
\mathbb{E}_x\left[e^{-q\tau^D}\right]\leq K,\qquad 0 \leq  K\leq 1\text{ fixed.}
\end{equation*}
Strategies in $\Theta$ satisfying this constraint are called \textit{feasible}, and are called \textit{infeasible} otherwise.

We want to maximize the expected net present value of dividend payments over the set of feasible strategies. That is, we aim to solve the optimization problem, for $x\geq0$ and $0 \leq K \leq 1$,
\begin{equation}\label{F.1}
V(x;K):=\sup_{D\in\Theta}v^D(x),\quad\text{s.t.}\quad \mathbb{E}_x\left[e^{-q\tau^D}\right]\leq K,
\end{equation}
where,  in the case $\mathbb{E}_x [e^{-q\tau^D} ] > K$ for all $D \in \Theta$, we set $V(x;K)= -\infty$ and call the problem  \eqref{F.1} infeasible.

Proceeding as in \cite{HJM17}, we use Lagrange multipliers to reformulate the problem. For $\Lambda\geq0$ we define the function
\begin{equation}\label{lm}
v_{\Lambda}^D(x;K):=v^D(x)+\Lambda\left(K-\mathbb{E}_x\left[e^{-q\tau^D}\right]\right).
\end{equation}
Note that we can write the problem \eqref{F.1} as $V(x;K)=\displaystyle\sup_{D\in\Theta}\displaystyle\inf_{\Lambda\geq0}v_{\Lambda}^D(x;K)$ since any infeasible strategy $D$ will make 
$\displaystyle\inf_{\Lambda\geq0}v_{\Lambda}^D(x;K)=-\infty$, and any feasible strategy $D$ will make
$\displaystyle\inf_{\Lambda\geq0}v_{\Lambda}^D(x;K)=v^D_0 (x;K)=v^D(x)$.
\par The dual problem of \eqref{F.1} is obtained by interchanging the $\sup$ with the $\inf$ in the expression above, yielding an upper bound,
\begin{equation}\label{p_dual}
V(x;K)=\sup_{D\in\Theta}\inf_{\Lambda\geq0}v_{\Lambda}^D(x;K)\leq \inf_{\Lambda\geq0} V_{\Lambda}(x;K),
\end{equation}
where 
\begin{align}
V_{\Lambda}(x;K) := \sup_{D\in\Theta} v_{\Lambda}^D(x;K). \label{p_Lambda}
\end{align}
Therefore, the main goal is to prove that $V(x;K)= \inf_{\Lambda\geq0}\displaystyle V_{\Lambda}(x;K)$ and to find an optimal $\Lambda$ (Lagrange multiplier) with which the infimum is attained. In order to do this we will first solve \eqref{p_Lambda}. 
\par We remark that if we set 
\begin{align}
V_\Lambda(x) :=V_\Lambda(x; 0) \quad \textrm{and} \quad v_\Lambda^D(x) := v_\Lambda^D(x; 0), \quad D \in \Theta, \label{V_Lambda_def}
\end{align}
then $v_\Lambda^D(x; K) = v_\Lambda^D(x) + \Lambda K$ and hence $V_\Lambda(x; K) =V_\Lambda(x) + \Lambda K$. Therefore, solving \eqref{p_Lambda} is equivalent to solving
\begin{equation}\label{p_Lambda_zero}
V_{\Lambda}(x):=\sup_{D\in\Theta}v_{\Lambda}^D(x).
\end{equation}

\section{Review of scale functions}  \label{section_scale_functions}
In this section we review the scale function of spectrally negative \lev processes. First, we define the process $Y=\{Y_t=X_t-\delta t: t\ge 0\}$ with its Laplace exponent 
\begin{align}\label{lap_exp_Y}
\psi_Y(\theta) := \psi(\theta) - \delta \theta, \quad \theta \geq 0. 
\end{align}
We assume here that $Y$ is a spectral negative \lev process and not the negative of a subordinator (see Assumption \ref{assumpdelta}).

Fix $q > 0$. Following the same notations as in \cite{KyLo}, we use $W^{(q)}$ and $\mathbb{W}^{(q)}$ for the scale functions of $X$ and $Y$ respectively.  These are the mappings from $\R$ to $[0, \infty)$ that are zero on the negative half-line, while on the positive half-line they are strictly increasing functions that are defined by their Laplace transforms:
\begin{align} \label{scale_function_laplace}
\begin{split}
\int_0^\infty  \mathrm{e}^{-\theta x} W^{(q)}(x) \diff x &= \frac 1 {\psi(\theta)-q}, \quad \theta > \Phi(q), \\
\int_0^\infty  \mathrm{e}^{-\theta x} \mathbb{W}^{(q)}(x) \diff x &= \frac 1 {\psi_Y(\theta) -q}, \quad \theta > \varphi(q),
\end{split}
\end{align}
where 
\begin{align}\label{def_varphi}
\Phi(q) := \sup \{ \lambda \geq 0: \psi(\lambda) = q\} \quad \textrm{and} \quad \varphi(q) := \sup \{ \lambda \geq 0: \psi_Y(\lambda) = q\}.
\end{align}
By the strict  convexity of $\psi$, we derive the inequality $\varphi(q) > \Phi(q) > 0$.
\par We also define, for $x \in \R$,
\begin{align*}
\overline{W}^{(q)}(x) &:=  \int_0^x W^{(q)}(y) \diff y, \\
Z^{(q)}(x) &:= 1 + q \overline{W}^{(q)}(x),  \\
\overline{Z}^{(q)}(x) &:= \int_0^x Z^{(q)} (z) \diff z = x + q \int_0^x \int_0^z W^{(q)} (w) \diff w \diff z.
\end{align*}
Noting that $W^{(q)}(x) = 0$ for $-\infty < x < 0$, we have
\begin{align*}
\overline{W}^{(q)}(x) = 0, \quad Z^{(q)}(x) = 1  \quad \textrm{and} \quad \overline{Z}^{(q)}(x) = x, \quad x \leq 0.  
\end{align*}
In addition, we define $\overline{\mathbb{W}}^{(q)}$, $\mathbb{Z}^{(q)}$ and $\overline{\mathbb{Z}}^{(q)}$ analogously for $Y$.  The scale functions of $X$ and $Y$ are related, for $x \in \R$,  by the following equality
\begin{align}\label{RLqp}
&\delta\int_0^x\mathbb{W}^{(q)}(x-y)W^{(q)}(y)\ud y=\overline{\mathbb{W}}^{(q)}(x)-\overline{W}^{(q)}(x), 
\end{align} 
which can be proven by showing that the Laplace transforms on both sides are equal. 

Regarding their behaviors as $x \downarrow 0$, we have, as in Lemmas 3.1 and 3.2 of      
\cite{KKR},
\begin{align}\label{eq:Wqp0}
\begin{split}
W^{(q)} (0) &= \left\{ \begin{array}{ll} 0, & \textrm{if $X$ is of unbounded
	variation,} \\ c^{-1}, & \textrm{if $X$ is of bounded variation,}
\end{array} \right.  \\
\mathbb{W}^{(q)} (0) &= \left\{ \begin{array}{ll} 0, & \textrm{if $Y$ is of unbounded
	variation,} \\ (c-\delta)^{-1}, & \textrm{if $Y$ is of bounded variation,}
\end{array} \right. 
\end{split}
\end{align} 
and 
\begin{align} \label{W_zero_derivative}
\begin{split}
W^{(q)\prime} (0+) &:=\lim_{x \downarrow 0}W^{(q) \prime} (x) =
\left\{ \begin{array}{ll}  \dfrac{2}{\sigma^2}, & \textrm{if }\sigma > 0, \\
\infty, & \textrm{if }\sigma = 0 \; \textrm{and} \; \Pi(0, \infty) = \infty, \\
\dfrac{q + \Pi(0, \infty)} {c^2}, &  \textrm{if }\sigma = 0 \; \textrm{and} \; \Pi(0, \infty) < \infty,
\end{array} \right. \\
\mathbb{W}^{(q) \prime }(0+) &:= \lim_{x \downarrow 0}\mathbb{W}^{(q)\prime} (x) =
\left\{ \begin{array}{ll}  \dfrac {2} {\sigma^2}, & \textrm{if }\sigma > 0, \\
\infty, & \textrm{if }\sigma = 0 \; \textrm{and} \; \Pi(0,\infty) = \infty, \\
\dfrac {q + \Pi(0, \infty)} {(c-\delta)^2}, &  \textrm{if }\sigma = 0 \; \textrm{and} \; \Pi(0,\infty) < \infty.
\end{array} \right. 
\end{split}
\end{align}
On the other hand, as in Lemma 3.3 of \cite{KyprianouRS10}, 
\begin{align}\label{W_q_limit}
e^{-\Phi(q) x}W^{(q)} (x) \nearrow \psi'(\Phi(q))^{-1} \quad \textrm{and} \quad e^{-\varphi(q) x}\mathbb{W}^{(q)} (x) \nearrow \psi_Y'(\varphi(q))^{-1}, \quad \textrm{as } x \rightarrow \infty.
\end{align}

\section{Optimal dividend problem with a terminal value}\label{optimal_dividend_tv_sn}

In this section, we solve the problem \eqref{p_Lambda_zero}. The results obtained here are applied to the constrained case in the next section. In this and next sections where we deal with the spectrally negative case, we assume the following.

\begin{assump} \label{assump_completely_monotone}
The \lev measure $\Pi$ of the process $X$ has a completely monotone density.  That is, $\Pi$ has a density $\pi$ whose $n^{th}$ derivative $ \pi^{(n)}$ exists for all $n \geq 1$ and satisfies
\begin{align*}
(-1)^n \pi^{(n)} (x) \geq 0, \quad x > 0.
\end{align*}
\end{assump}

\begin{remark} \label{remark_smoothness_scale_function_SN}
Under Assumption \ref{assump_completely_monotone}, the scale functions $W^{(q)}$ and $\mathbb{W}^{(q)}$ defined in Section \ref{section_scale_functions} are infinitely continuously differentiable on $(0, \infty)$. For more details, see Lemma \ref{compmon}.
\end{remark}

This assumption is known to be a sufficient optimality condition for threshold strategies in the classical spectrally negative case by Loeffen \cite{Loeffen09}, for the absolutely continuous case (with $\Lambda=0$) by Kyprianou et al. \cite{KLP}, and for the periodic case by Noba et al. \cite{NPYY} (with $\Lambda=0$).

In this section, \emph{we allow $\Lambda$ to be negative} (in which case a positive payoff is collected at ruin time) but need to assume the following in order to avoid the trivial case (see Remark \ref{remark_trivial}).
\begin{assump} \label{assump_SN_param}
We assume $q \Lambda + \delta > 0$.
\end{assump}
\begin{remark} \label{remark_trivial}
In the case Assumption \ref{assump_SN_param} does not hold (i.e.\ $q \Lambda + \delta \leq 0$), because the dividend rate is bounded by $\delta$, we have that, for any dividend policy $D\in\Theta$ and $x\geq 0$,
\begin{align*}
v^D_{\Lambda}(x)\leq \delta \E_x \left[\int_0^{\tau^D} e^{-qt } \diff t\right] + - \Lambda \E_x [e^{-q \tau^D}] &\leq \frac{\delta}{q}-\frac{q\Lambda+\delta}{q}\mathbb{E}_x\left[e^{-q\tau^D}\right]
&\leq \frac{\delta}{q}-\frac{q\Lambda+\delta}{q}
\sup_{D' \in \Theta}\mathbb{E}_x\left[e^{-q\tau^{D'}}\right].
\end{align*}

This implies that $v^D_{\Lambda}$ is maximized by taking the strategy that pays dividends at the ceiling rate $\delta$ for all $t\geq0$ because it maximizes $\mathbb{E}_x\left[e^{-q\tau^{D'}}\right]$ over $D' \in \Theta$.
\end{remark}

Finally, we make the following assumption:
\begin{assump}\label{assumpdelta}
If $X$ has paths of bounded variation, then $\delta<c$.
\end{assump}

This assumption is commonly assumed in the literature (see \cite{KLP} and \cite{PYX}). This is needed so that one cannot completely reflect the process at a given barrier -- otherwise, the problem is almost identical to the classical case without the absolutely continuous assumption.

\subsection{Threshold strategies}
The objective of this section is to show that the optimal strategies for \eqref{p_Lambda_zero} are of the threshold type. Under the threshold strategy $D^b$ for $b \geq 0$, the resulting controlled process $U^b$ is known as a refracted L\'evy process of  \cite{KyLo}, which is the unique strong solution to 
\[
U^b_t= X_t - D_t^b \quad \textrm{ where} \quad 
D^b_t := \delta\int_0^t1_{\{U_s^b>b\}}\mathrm{d}s,\qquad\text{$t\geq0$.}
\]
Let its ruin time be
\begin{align*}
\tau_{b}:=\inf\{t>0: U^{b}_t < 0 \}.
\end{align*}

The next identities are lifted from Theorems 5.(ii) and 6.(ii) in \cite{KyLo}. For $x \in \R$ and $b \geq 0$, we have
\begin{align}\label{div_ac}
\mathbb{E}_x\left[\int_0^{\tau_{b}} e^{-qt}
\diff D^b_t\right]=-\delta \overline{\mathbb{W}}^{(q)}(x-b)+ \frac{1}{h(b)}\biggl(W^{(q)}(x)+\delta\int_b^x \mathbb W^{(q)}(x-y)W^{(q)\prime}(y)\diff y\biggr),
\end{align}
and
\begin{align}\label{lap_ac}
\Psi_{x}(b)&:=  \mathbb{E}_{x}\left[e^{-q\tau_{b}}\right]\notag\\
&=Z^{(q)}(x)+\delta q \int_b^x\mathbb{W}^{(q)}(x-y)W^{(q)}(y)\diff y\notag\\
&\quad -\frac{q\varphi(q)e^{\varphi(q)b}\displaystyle\int_b^{\infty}e^{-\varphi(q) y}W^{(q)}(y) \diff y}{h(b)} \left(W^{(q)}(x)+\delta\int_b^x\mathbb{W}^{(q)}(x-y)W^{(q)\prime}(y)\diff y\right),
\end{align}
where
\begin{equation}\label{funh}
h(b):=\varphi(q)e^{\varphi(q)b}\displaystyle\int_{b}^{\infty}e^{-\varphi(q)y}W^{(q)\prime}(y)\diff y.
\end{equation}
Under the threshold strategy $D^b$, the expected net present value is denoted by
\begin{equation}\label{vf_1_new} 
v_{\Lambda}^{b}(x):=\mathbb{E}_x\left[\int_0^{\tau_{b}} e^{-qt}
\diff D^b_t\right]-\Lambda\Psi_{x}(b),\ \text{for}\ x\geq0. 
\end{equation}

Using \eqref{div_ac} and \eqref{lap_ac}, we have the following result. 
\begin{proposition}
The function $v_{\Lambda}^{b}$, with  $b\geq 0$, is given by
\begin{align}\label{vf_1}
v_{\Lambda}^{b}(x)&=\xi_{\Lambda}(b)\left(W^{(q)}(x)+\delta\int_b^x\mathbb{W}^{(q)}(x-y)W^{(q)\prime}(y)\diff y\right)\notag\\
&-\Lambda\left(Z^{(q)}(x)+\delta q \int_b^x\mathbb{W}^{(q)}(x-y)W^{(q)}(y)\diff y\right)-\delta\overline{\mathbb W}^{(q)}(x-b),\ \text{for $x\geq 0$,}
\end{align}
where 
\begin{align}
\xi_{\Lambda}(b):=\frac{1}{h(b)}\biggl(1+ \varphi(q) q\Lambda e^{\varphi(q)b}\displaystyle\int_{b}^{\infty}e^{-\varphi(q) y}W^{(q)}(y) \diff y\biggr)\label{xi_lambda}.
\end{align}
In particular, for $x \leq b$, we have
\begin{align}\label{vf_1_x_less}
v_{\Lambda}^{b}(x)&=\xi_{\Lambda}(b) W^{(q)}(x) -\Lambda Z^{(q)}(x).
\end{align}
\end{proposition}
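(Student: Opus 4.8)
The plan is to derive \eqref{vf_1} by a direct substitution of the two fluctuation identities \eqref{div_ac} and \eqref{lap_ac} into the definition \eqref{vf_1_new}, followed by collecting the terms that share a common factor. The first thing I would do is isolate the quantity
\[
A(x) := W^{(q)}(x)+\delta\int_b^x\mathbb{W}^{(q)}(x-y)W^{(q)\prime}(y)\diff y,
\]
which appears in both identities: it is multiplied by $1/h(b)$ in the dividend term \eqref{div_ac}, and it appears inside $\Psi_x(b)$ in \eqref{lap_ac} multiplied by $-q\varphi(q)e^{\varphi(q)b}\int_b^{\infty}e^{-\varphi(q) y}W^{(q)}(y)\diff y/h(b)$. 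Recognizing this shared structure is what makes the whole computation collapse cleanly.

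Substituting both identities into $v_{\Lambda}^{b}(x)=\mathbb{E}_x[\int_0^{\tau_b}e^{-qt}\diff D^b_t]-\Lambda\Psi_x(b)$ and grouping, the coefficient of $A(x)$ becomes
\[
\frac{1}{h(b)}+\Lambda\cdot\frac{q\varphi(q)e^{\varphi(q)b}\int_b^{\infty}e^{-\varphi(q) y}W^{(q)}(y)\diff y}{h(b)}
=\frac{1}{h(b)}\Bigl(1+\varphi(q)q\Lambda e^{\varphi(q)b}\int_b^{\infty}e^{-\varphi(q) y}W^{(q)}(y)\diff y\Bigr),
\]
which is precisely $\xi_{\Lambda}(b)$ as defined in \eqref{xi_lambda}. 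The leftover contributions, namely $-\Lambda\bigl(Z^{(q)}(x)+\delta q\int_b^x\mathbb{W}^{(q)}(x-y)W^{(q)}(y)\diff y\bigr)$ coming from the remaining part of $\Psi_x(b)$, and $-\delta\overline{\mathbb{W}}^{(q)}(x-b)$ coming from the dividend term, carry over unchanged. This yields exactly \eqref{vf_1}.

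For the reduced form \eqref{vf_1_x_less} in the region $x\le b$, I would invoke the fact that $\mathbb{W}^{(q)}$ and $\overline{\mathbb{W}}^{(q)}$ vanish on $(-\infty,0]$. When $x\le b$ and $y$ ranges between $x$ and $b$, the argument $x-y$ is nonpositive, so $\mathbb{W}^{(q)}(x-y)=0$ and both refraction integrals disappear; likewise $\overline{\mathbb{W}}^{(q)}(x-b)=0$. Consequently $A(x)$ reduces to $W^{(q)}(x)$ and the second bracket to $Z^{(q)}(x)$, giving $v_{\Lambda}^{b}(x)=\xi_{\Lambda}(b)W^{(q)}(x)-\Lambda Z^{(q)}(x)$. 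Since every step is an algebraic rearrangement of identities already established in \cite{KyLo}, I do not expect a genuine obstacle; the only points requiring care are the bookkeeping of the coefficient of $A(x)$ and the observation that the refraction integrals are supported on $\{y\le x\}$, which is what makes the simplification for $x\le b$ immediate.
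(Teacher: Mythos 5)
Your proof is correct and is exactly the argument the paper intends (the paper states the proposition immediately after the two identities with the remark ``Using \eqref{div_ac} and \eqref{lap_ac}, we have the following result,'' leaving the substitution implicit). The bookkeeping of the coefficient of $A(x)$ and the vanishing of $\mathbb{W}^{(q)}$ and $\overline{\mathbb{W}}^{(q)}$ on the nonpositive half-line for the case $x\leq b$ are handled correctly.
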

\begin{remark}
From \eqref{funh} and integration by parts,
\begin{align}
\varphi(q)e^{\varphi(q)b}\int_b^\infty e^{-\varphi(q) y} W^{(q)} (y) \diff  y = W^{(q)} (b) + \frac{h(b)}{\varphi(q)}, \quad b \geq 0. \label{int_by_parts_laplace}
\end{align}
Hence, the function $\xi_{\Lambda}$,  given in \eqref{xi_lambda}, can be rewritten in the following way
\begin{equation}\label{v5}
\xi_{\Lambda}(b)=\frac{1}{h(b)}\biggr(1+q\Lambda\bigg(W^{(q)}(b)+\frac{h(b)}{\varphi(q)}\bigg)\biggl), \quad b \geq 0.
\end{equation}
\end{remark}
In particular, for the case $b = 0$, these expressions can be simplified as follows; the proof is deferred to Appendix \ref{proof_lemma_zero_case}.
\begin{lemma} \label{lemma_zero_case}
We have 
\begin{equation}\label{xi_0_new}
\begin{split}
h(0)&=\varphi(q) \displaystyle\int_{0}^{\infty}e^{-\varphi(q)y}W^{(q)\prime}(y)\diff y = \varphi(q) \Big( - W^{(q)}(0)  + \delta^{-1} \Big), \\
\xi_{\Lambda}(0)&=\frac{1}{h(0)}\biggl(1+ \frac {q \Lambda} \delta \bigg)= \dfrac{\delta+q\Lambda}{\varphi(q)(1-\delta W^{(q)}(0))},
\end{split}
\end{equation}
and, for $x\geq0$,
\begin{align}\label{v_0_new}
v_{\Lambda}^{0}(x)&=\xi_{\Lambda}(0) \mathbb{W}^{(q)}(x)  (1 -  \delta W^{(q)}(0)) -\Lambda \mathbb{Z}^{(q)}(x) -\delta \overline{\mathbb W}^{(q)}(x).
\end{align}
\end{lemma}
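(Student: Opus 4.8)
The plan is to dispatch the three displays of the lemma in order, since all of them rest on two basic ingredients: the value of the Laplace transform $\int_0^\infty e^{-\varphi(q) y} W^{(q)}(y) \diff y$ and the convolution identity \eqref{RLqp} together with its $x$-derivative. I would begin by pinning down that Laplace transform. Since $\varphi(q) > \Phi(q)$ by the remark following \eqref{def_varphi}, the representation \eqref{scale_function_laplace} applies at $\theta = \varphi(q)$, giving $\int_0^\infty e^{-\varphi(q) y} W^{(q)}(y) \diff y = (\psi(\varphi(q)) - q)^{-1}$; and by the definition of $\varphi(q)$ in \eqref{def_varphi} together with \eqref{lap_exp_Y} one has $\psi(\varphi(q)) - q = \delta \varphi(q)$, so this transform equals $(\delta \varphi(q))^{-1}$. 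For the first line of \eqref{xi_0_new}, setting $b = 0$ in \eqref{funh} gives $h(0) = \varphi(q) \int_0^\infty e^{-\varphi(q) y} W^{(q)\prime}(y) \diff y$; integrating by parts and invoking \eqref{W_q_limit} (which makes the boundary term $e^{-\varphi(q) y} W^{(q)}(y)$ vanish as $y \to \infty$, precisely because $\varphi(q) > \Phi(q)$) rewrites this as $\varphi(q)\bigl(-W^{(q)}(0) + \varphi(q) \int_0^\infty e^{-\varphi(q) y} W^{(q)}(y) \diff y\bigr)$, and substituting the transform value yields $h(0) = \varphi(q)(-W^{(q)}(0) + \delta^{-1})$.

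The formula for $\xi_\Lambda(0)$ then follows directly. Putting $b = 0$ in \eqref{xi_lambda} and inserting $\int_0^\infty e^{-\varphi(q) y} W^{(q)}(y) \diff y = (\delta\varphi(q))^{-1}$ collapses the bracket to $1 + q\Lambda/\delta$, which is the middle expression in the second line of \eqref{xi_0_new}; rewriting $h(0) = \varphi(q)(1 - \delta W^{(q)}(0))/\delta$ and simplifying then produces the closed form $(\delta + q\Lambda)/(\varphi(q)(1 - \delta W^{(q)}(0)))$.

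The main work, and the step I expect to be the obstacle, is reducing \eqref{vf_1} at $b = 0$ to \eqref{v_0_new}, which I would do by treating its two convolution integrals separately. For the one multiplying $\Lambda$, identity \eqref{RLqp} immediately gives $\delta q \int_0^x \mathbb{W}^{(q)}(x-y) W^{(q)}(y) \diff y = q(\overline{\mathbb{W}}^{(q)}(x) - \overline{W}^{(q)}(x))$, and combining with $Z^{(q)}(x) = 1 + q\overline{W}^{(q)}(x)$ telescopes to $\mathbb{Z}^{(q)}(x) = 1 + q\overline{\mathbb{W}}^{(q)}(x)$, so the whole $\Lambda$-block becomes $-\Lambda \mathbb{Z}^{(q)}(x)$. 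The delicate term is $W^{(q)}(x) + \delta \int_0^x \mathbb{W}^{(q)}(x - y) W^{(q)\prime}(y) \diff y$, which must be shown to equal $\mathbb{W}^{(q)}(x)(1 - \delta W^{(q)}(0))$. My plan is to integrate by parts in this convolution, producing the boundary contribution $\mathbb{W}^{(q)}(0) W^{(q)}(x) - \mathbb{W}^{(q)}(x) W^{(q)}(0)$ together with the conjugate convolution $\int_0^x \mathbb{W}^{(q)\prime}(x-y) W^{(q)}(y) \diff y$; the latter I would eliminate by differentiating \eqref{RLqp} in $x$ via the Leibniz rule (whose endpoint contribution is exactly $\delta\mathbb{W}^{(q)}(0) W^{(q)}(x)$) and solving for it.

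After substitution the $\mathbb{W}^{(q)}(0) W^{(q)}(x)$ contributions cancel and the leftover $W^{(q)}(x)$ terms collapse, leaving precisely $\mathbb{W}^{(q)}(x)(1 - \delta W^{(q)}(0))$. The care needed here is in the bounded-variation case, where $W^{(q)}(0)$ and $\mathbb{W}^{(q)}(0)$ are nonzero by \eqref{eq:Wqp0}, so these boundary terms genuinely contribute and their cancellation is exactly what makes the factor $(1 - \delta W^{(q)}(0))$ survive; in the unbounded-variation case both vanish and the reduction is immediate. Assembling the $\xi_\Lambda(0)$-block, the $-\Lambda\mathbb{Z}^{(q)}(x)$ term, and the unchanged $-\delta\overline{\mathbb{W}}^{(q)}(x)$ then gives \eqref{v_0_new}.
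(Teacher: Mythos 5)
Your proposal is correct and follows essentially the same route as the paper: the Laplace transform value $(\delta\varphi(q))^{-1}$, integration by parts for $h(0)$ and hence $\xi_\Lambda(0)$, and then the convolution identity \eqref{RLqp} together with its $x$-derivative to reduce \eqref{vf_1} at $b=0$ to \eqref{v_0_new}. The only cosmetic difference is that you reach the key identity \eqref{conv_W_W_prime} by integrating the convolution by parts and then differentiating \eqref{RLqp}, whereas the paper changes variables in \eqref{RLqp} first and differentiates once; the boundary-term bookkeeping is identical.
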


\subsection{Selection of optimal threshold $b_\Lambda$}
 Focusing on the set of threshold strategies, we now select our candidate optimal threshold, which we call $b_\Lambda$.  In view of \eqref{vf_1_x_less}, such $b_\Lambda$ must maximize $\xi_\Lambda$. Motivated by this fact, we pursue $b_\Lambda$ such that $\xi'_\Lambda(b_{\Lambda})$ vanishes if such a value exists.

First, we rewrite the form of $\xi_{\Lambda}'(b)$ as follows. Fix $b > 0$. Taking a derivative in \eqref{v5} and using that $h'(b)=\varphi(q) (h(b) - W^{(q)\prime}(b))$ (by \eqref{funh}), 
\begin{align}\label{v9}
\xi_{\Lambda}'(b) & = q\Lambda-\dfrac{h'(b)}{h(b)}\xi_{\Lambda}(b)  \notag\\
& =q\Lambda-\frac{\varphi(q)} {h(b)}\biggr(1+q\Lambda\bigg(W^{(q)}(b)+\dfrac{h(b)}{\varphi(q)}\bigg)\biggl)+\dfrac{\varphi(q) W^{(q)\prime}(b)}{h(b)}\xi_{\Lambda}(b) \notag\\
& =\dfrac{\varphi(q)W^{(q)\prime}(b)}{h(b)}(\xi_{\Lambda}(b)-g_{\Lambda}(b)), 
\end{align}
where 
\begin{equation}\label{v9.0}
g_{\Lambda}(b):=\dfrac{1+q\Lambda W^{(q)}(b)}{W^{(q)\prime}(b)}. 
\end{equation}

In view of \eqref{v9}, we now define the (candidate) optimal barrier level for \eqref{p_Lambda_zero} by
\begin{equation}\label{b1}
b_\Lambda := \inf \{ b > 0: \xi_\Lambda'(b) \leq 0 \} = \inf \{ b > 0: \xi_\Lambda(b) - g(b) \leq 0 \}.
\end{equation}
Here, we set $\inf \emptyset = \infty$ for convenience, but we will see in Proposition \ref{maxb.1} that $b_\Lambda$ is necessarily finite.

\begin{remark}\label{remfunh}
Following the proof of Lemma 3 in \cite{KLP}, the function $h$ as in \eqref{funh} has the following properties:
\begin{itemize}
\item[(i)] As a special case with $\Lambda = 0$,  $b_{0}$ is the point where $b\mapsto h(b)$ attains its global minimum. Hence $h'(b)<0$ for $b<b_0$ and $h'(b)>0$ for $b>b_0$.
\item[(ii)] We have that $\displaystyle\lim_{b\to\infty}h(b)=\infty$.
\end{itemize}
\end{remark}
\begin{remark} \label{remark_about_g}
Note that the function $g_\Lambda$ plays a key role in \cite{Loeffen08} and satisfies the following:
\begin{enumerate}
\item $g_{\Lambda}(0+)=\dfrac{1+q\Lambda W^{(q)}(0)}{W^{(q)\prime}(0 +)}$ and $g_{\Lambda}(b)\rightarrow\dfrac{q
		\Lambda}{\Phi(q)}$ as $b\rightarrow\infty$ by \eqref{W_q_limit}.
\item If we define 
\begin{align*}
a_{\Lambda} := \sup\{b\geq0: g_{\Lambda}(b)\geq g_{\Lambda}(x), \ \text{for all}\ x\geq0\},
\end{align*}
we know that $a_{\Lambda}$ is finite (see \cite[Proposition 3]{Loeffen08}) and is the unique point where $g_{\Lambda}$ has a global maximum; see \cite[proof of Thm.1]{Loeffen08}. Moreover if $a_{\Lambda}=0$, then $g'_{\Lambda} (b)<0$ for $b \in (0,\infty)$, and if $a_{\Lambda}>0$ then $g'_{\Lambda}(a_{\Lambda})=0$, $g'_{\Lambda}(b)>0$ for $b<a_{\Lambda}$ and $g'_{\Lambda}(b)<0$ for $b>a_{\Lambda}$.
\end{enumerate}
\end{remark}

We will now prove an auxiliary result which describes the asymptotic behaviour of the function $\xi_{\Lambda}$.
\begin{lemma} We have
\begin{equation}\label{lim}
\lim_{b\to\infty}\xi_{\Lambda}(b)=\lim_{b\to\infty}g_{\Lambda}(b)=\dfrac{q\Lambda}{\Phi(q)}.
\end{equation}	
\end{lemma}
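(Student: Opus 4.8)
The plan is as follows. The equality $\lim_{b\to\infty}g_{\Lambda}(b)=q\Lambda/\Phi(q)$ is already recorded in Remark \ref{remark_about_g}(1), so the entire task reduces to showing $\lim_{b\to\infty}\xi_{\Lambda}(b)=q\Lambda/\Phi(q)$. Starting from the rewritten form \eqref{v5}, I would first split
\begin{equation*}
\xi_{\Lambda}(b)=\frac{q\Lambda}{\varphi(q)}+\frac{1+q\Lambda W^{(q)}(b)}{h(b)}.
\end{equation*}
Since $h(b)\to\infty$ as $b\to\infty$ (Remark \ref{remfunh}(ii)), the constant $1$ in the numerator contributes nothing in the limit, so everything hinges on the ratio $W^{(q)}(b)/h(b)$.

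To evaluate this ratio I would pin down the exponential growth rate of $h$. Using \eqref{int_by_parts_laplace} to eliminate the integral of $W^{(q)\prime}$, I write
\begin{equation*}
h(b)=\varphi(q)\Big(\varphi(q)\,e^{\varphi(q)b}\int_b^\infty e^{-\varphi(q)y}W^{(q)}(y)\,\diff y-W^{(q)}(b)\Big).
\end{equation*}
Set $w(y):=e^{-\Phi(q)y}W^{(q)}(y)$, which by \eqref{W_q_limit} increases to $L:=\psi'(\Phi(q))^{-1}$. Multiplying through by $e^{-\Phi(q)b}$ and substituting $y=b+u$ in the integral gives
\begin{equation*}
e^{-\Phi(q)b}h(b)=\varphi(q)\Big(\varphi(q)\int_0^\infty e^{-(\varphi(q)-\Phi(q))u}\,w(b+u)\,\diff u-w(b)\Big).
\end{equation*}

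The main step, and the only delicate one, is passing to the limit inside this last integral. Since $\varphi(q)>\Phi(q)$ (from the strict convexity of $\psi$, as noted after \eqref{def_varphi}), the factor $e^{-(\varphi(q)-\Phi(q))u}$ is integrable on $(0,\infty)$, and $0\le w\le L$, so $L\,e^{-(\varphi(q)-\Phi(q))u}$ dominates the integrand uniformly in $b$. As $w(b+u)\to L$ for each fixed $u$, dominated convergence yields $\int_0^\infty e^{-(\varphi(q)-\Phi(q))u}w(b+u)\,\diff u\to L/(\varphi(q)-\Phi(q))$, while $w(b)\to L$. Hence
\begin{equation*}
e^{-\Phi(q)b}h(b)\longrightarrow\varphi(q)L\Big(\frac{\varphi(q)}{\varphi(q)-\Phi(q)}-1\Big)=\frac{\varphi(q)\Phi(q)}{\varphi(q)-\Phi(q)}\,L.
\end{equation*}

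Combining with $e^{-\Phi(q)b}W^{(q)}(b)=w(b)\to L$ gives $W^{(q)}(b)/h(b)\to(\varphi(q)-\Phi(q))/(\varphi(q)\Phi(q))$, and substituting back produces
\begin{equation*}
\xi_{\Lambda}(b)\longrightarrow\frac{q\Lambda}{\varphi(q)}+q\Lambda\,\frac{\varphi(q)-\Phi(q)}{\varphi(q)\Phi(q)}=\frac{q\Lambda}{\Phi(q)},
\end{equation*}
as claimed. I expect the dominated-convergence step, equivalently the precise constant in the growth rate of $h$, to be the crux; everything else is algebraic bookkeeping with the identities already established in the excerpt.
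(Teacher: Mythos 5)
Your proof is correct and follows essentially the same route as the paper's: both reduce the claim to computing $\lim_{b\to\infty}W^{(q)}(b)/h(b)=(\varphi(q)-\Phi(q))/(\varphi(q)\Phi(q))$ via a change of variables and dominated convergence based on \eqref{W_q_limit}. The only (harmless) difference is that you first eliminate $W^{(q)\prime}$ from $h$ using \eqref{int_by_parts_laplace}, so your domination needs only $0\le e^{-\Phi(q)y}W^{(q)}(y)\le\psi'(\Phi(q))^{-1}$, whereas the paper works with $h(b)/W^{(q)}(b)$ directly and in addition uses the convergence of $W^{(q)\prime}(y)/W^{(q)}(y)$ to $\Phi(q)$.
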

\begin{proof}
Recall from Remark \ref{remark_about_g} the convergence of $g_\Lambda$. Now, letting $b\rightarrow\infty$ in \eqref{v5}, we observe that 
\begin{equation}\label{v7}
\lim_{b\rightarrow\infty}\xi_{\Lambda}(b)=q\Lambda\biggl(\frac{1}{\varphi(q)}+\lim_{b\rightarrow\infty}\frac{ W^{(q)}(b)}{h(b)}\biggr),
\end{equation}
since $h(b)\rightarrow\infty$ as $b\rightarrow\infty$. On the other hand, by dominated convergence theorem and using \eqref{W_q_limit}, it follows that  
\begin{align}\label{v8}
\dfrac{W^{(q)}(b)}{h(b)} &= \biggr(\varphi(q) \int_{0}^{\infty}e^{-(\varphi(q)-\Phi(q)) y} \frac {W^{(q)\prime} (y+b)} {W^{(q)}(y+b)}\dfrac{e^{-\Phi(q)[y+b]}W^{(q)}(y+b)}{e^{-\Phi(q)b}W^{(q)}(b)}\diff y \biggl)^{-1}\notag\\ 
&\xrightarrow{b \uparrow \infty} \biggl( \varphi(q)\Phi(q) \int_0^\infty e^{-(\varphi(q)-\Phi(q)) y} \diff y  \biggr)^{-1}  = \dfrac{\varphi(q)-\Phi(q)}{\varphi(q)\Phi(q)},
\end{align}
where we recall that $\varphi(q)>\Phi(q)$. Now, applying \eqref{v8} in \eqref{v7}, we get \eqref{lim}.
\end{proof}
\begin{proposition}\label{maxb.1}
(1) Under Assumption \ref{assump_completely_monotone}, we have $b_{\Lambda}\in[0,a_\Lambda]$.

(2) Moreover, $b_{\Lambda}=0$ if and only if one of the following two cases holds: 
\begin{enumerate}
	\item[\textit{(i)}] $\sigma=0$, $\Pi(0,\infty)<\infty$, and $\varphi(q)\geq \dfrac{(\delta+q\Lambda)(q+\Pi(0,\infty))}{(c+q\Lambda)(c-\delta)} =: \phi_1(\Lambda)$, or
	\item[\textit{(ii)}]$\sigma>0$ and $\varphi(q)\geq\dfrac{2(\delta+q\Lambda)}{\sigma^{2}} =: \phi_2(\Lambda)$.  
\end{enumerate}
\end{proposition}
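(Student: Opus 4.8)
The plan is to read everything off the sign identity in \eqref{v9}. Since $\varphi(q)>0$, $h(b)>0$, and, under Assumption \ref{assump_completely_monotone}, $W^{(q)\prime}(b)>0$ for $b>0$, the derivative $\xi_\Lambda'(b)$ has exactly the same sign as $F(b):=\xi_\Lambda(b)-g_\Lambda(b)$. Thus $b_\Lambda$ is the first point at which $F$ passes from positive to nonpositive values, and on $(0,b_\Lambda)$ we have $F>0$ with $\xi_\Lambda$ strictly increasing. The crucial auxiliary observation is that at any zero $b^*$ of $F$ one has $\xi_\Lambda'(b^*)=0$, so differentiating $F=\xi_\Lambda-g_\Lambda$ gives $F'(b^*)=-g_\Lambda'(b^*)$; this ties the behaviour of $F$ at a crossing to the monotonicity of $g_\Lambda$ recorded in Remark \ref{remark_about_g}.

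For part (1) I first rule out $b_\Lambda=\infty$. If $b_\Lambda=\infty$, then $F>0$ on all of $(0,\infty)$, so $\xi_\Lambda$ is strictly increasing there; since $\xi_\Lambda(b)\nearrow q\Lambda/\Phi(q)=:L$ by \eqref{lim}, this forces $\xi_\Lambda(b)<L$ for every $b$. On the other hand, by the unimodality of $g_\Lambda$ (Remark \ref{remark_about_g}) together with \eqref{lim}, $g_\Lambda$ decreases to $L$ on $(a_\Lambda,\infty)$, so $g_\Lambda(b)>L$ there; choosing any $b>a_\Lambda$ yields $L>\xi_\Lambda(b)>g_\Lambda(b)>L$, a contradiction, whence $b_\Lambda<\infty$. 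Next, if $b_\Lambda>0$, continuity gives $F(b_\Lambda)=0$, and since $F>0$ immediately to the left we get $F'(b_\Lambda)\le 0$; the auxiliary identity then gives $g_\Lambda'(b_\Lambda)=-F'(b_\Lambda)\ge 0$. As $g_\Lambda'<0$ on $(a_\Lambda,\infty)$, this is possible only if $b_\Lambda\le a_\Lambda$. When $a_\Lambda=0$ the same reasoning makes $b_\Lambda>0$ impossible, so $b_\Lambda=0$; in all cases $b_\Lambda\in[0,a_\Lambda]$.

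For part (2) I use that, by continuity of $F$, $b_\Lambda=0$ if and only if $F(0+)\le 0$, i.e. $\xi_\Lambda(0+)\le g_\Lambda(0+)$ (the equality case being treated below). Here $\xi_\Lambda(0+)=\xi_\Lambda(0)=(\delta+q\Lambda)/[\varphi(q)(1-\delta W^{(q)}(0))]$ by \eqref{xi_0_new} in Lemma \ref{lemma_zero_case}, while $g_\Lambda(0+)=(1+q\Lambda W^{(q)}(0))/W^{(q)\prime}(0+)$ by Remark \ref{remark_about_g}. Substituting the boundary values of $W^{(q)}(0)$ and $W^{(q)\prime}(0+)$ from \eqref{eq:Wqp0} and \eqref{W_zero_derivative} in each path regime, the inequality $\xi_\Lambda(0)\le g_\Lambda(0+)$ becomes: in the bounded variation case ($\sigma=0$, $\Pi(0,\infty)<\infty$) exactly $\varphi(q)\ge\phi_1(\Lambda)$ after clearing denominators (using $c-\delta>0$ from Assumption \ref{assumpdelta} and $c+q\Lambda>0$); for $\sigma>0$ exactly $\varphi(q)\ge\phi_2(\Lambda)$; and in the remaining case $\sigma=0$, $\Pi(0,\infty)=\infty$ one has $W^{(q)\prime}(0+)=\infty$, hence $g_\Lambda(0+)=0<\xi_\Lambda(0)$ (the latter positive by Assumption \ref{assump_SN_param}), so that $b_\Lambda>0$ always and neither (i) nor (ii) can hold. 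This reproduces conditions (i) and (ii).

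The main obstacle I anticipate is the boundary equality $F(0+)=0$, i.e. $\varphi(q)=\phi_i(\Lambda)$, where mere continuity does not fix the sign of $F$ just to the right of $0$ and hence whether $b_\Lambda=0$. I expect to settle this with the crossing identity $F'(0+)=-g_\Lambda'(0+)$: when $a_\Lambda>0$ the function $g_\Lambda$ is increasing at the origin, so $g_\Lambda'(0+)\ge 0$ and thus $F'(0+)\le 0$, giving $F\le 0$ to the right of $0$ and $b_\Lambda=0$; when $a_\Lambda=0$ part (1) already forces $b_\Lambda=0$. This keeps the inequalities in (i)--(ii) non-strict, as stated, and any residual degenerate subcase $g_\Lambda'(0+)=0$ would be handled by a higher-order expansion of $F$ at the origin.
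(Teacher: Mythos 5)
Your proposal is correct, and it reproduces the paper's part (2) essentially verbatim: the paper also reduces $b_\Lambda=0$ to $g_\Lambda(0+)\geq\xi_\Lambda(0+)$, arrives at the same intermediate inequality $\varphi(q)\geq (\delta+q\Lambda)W^{(q)\prime}(0+)/\bigl[(1+q\Lambda W^{(q)}(0))(1-\delta W^{(q)}(0))\bigr]$, and then substitutes \eqref{eq:Wqp0} and \eqref{W_zero_derivative}. For part (1) your route is genuinely different from (and arguably cleaner than) the paper's. The paper argues globally: if $\xi_\Lambda(\bar b)-g_\Lambda(\bar b)>0$ for some $\bar b>a_\Lambda$, then since $g_\Lambda$ decreases past $a_\Lambda$ the difference stays positive and increases on $(\bar b,\infty)$, contradicting the common limit \eqref{lim}. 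You instead split the work into (a) ruling out $b_\Lambda=\infty$ by the bare monotone-limit comparison ($\xi_\Lambda<L<g_\Lambda$ past $a_\Lambda$), and (b) a local first-order condition at the crossing, $F'(b_\Lambda)=-g_\Lambda'(b_\Lambda)\leq 0$, which forces $b_\Lambda\leq a_\Lambda$; both rest on the same ingredients (\eqref{v9}, \eqref{lim}, Remark \ref{remark_about_g}) but avoid the propagation argument. You are also more careful than the paper about the boundary case $\xi_\Lambda(0+)=g_\Lambda(0+)$, which the paper passes over silently; the one soft spot is your unexecuted ``higher-order expansion'' for the subcase $g_\Lambda'(0+)=0$. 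That fallback is unnecessary: on $(0,\min(b_\Lambda,a_\Lambda))$ one has $F'=cF-g_\Lambda'$ with $c=\varphi(q)W^{(q)\prime}/h>0$ and $g_\Lambda'>0$, so the integrating factor gives $\bigl(F(b)e^{-\int_{b_0}^b c}\bigr)'=-g_\Lambda'(b)e^{-\int_{b_0}^b c}<0$; starting from $F(0+)=0$ this makes $F<0$ immediately to the right of the origin, contradicting $F>0$ on $(0,b_\Lambda)$ and closing the case without any expansion.
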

\begin{proof}
(1) By the definition of $b_\Lambda$ as in \eqref{b1} and the continuity of $\xi_\Lambda$ and $g_\Lambda$,  in order to show $b_\Lambda \leq a_\Lambda$ it is sufficient to show $\xi_{\Lambda}'(b) \leq 0$ (or equivalently $\xi_\Lambda(b) - g_\Lambda(b) \leq 0$) on $[a_\Lambda, \infty)$.  To show this, suppose there exists $\bar{b} > a_\Lambda$ such that $\xi_\Lambda(\bar{b}) - g_\Lambda (\bar{b})> 0$.  Then, since $g_{\Lambda}$ is decreasing on $(a_\Lambda, \infty)$ as in Remark \ref{remark_about_g} (2),  we obtain by \eqref{v9} that $\xi_\Lambda(b) - g_\Lambda(b)$ is increasing on $(\bar{b}, \infty)$. However, this contradicts with \eqref{lim}. Hence,  $\xi_{\Lambda}'(b) \leq 0$ on $[a_\Lambda, \infty)$.

(2) Using \eqref{v9} and the definition of $b_{\Lambda}$ given in \eqref{b1}, we obtain that  $b_{\Lambda}=0$ if and only if $g_{\Lambda}(0+) \geq \xi_{\Lambda}(0+)$. This is equivalent, by Lemma \ref{lemma_zero_case} and Remark \ref{remark_about_g} (1), to 
	$$\varphi(q) \geq \dfrac{(\delta+q\Lambda)W^{(q)\prime}(0+)}{(1+q\Lambda W^{(q)}(0))(1-\delta W^{(q)}(0))}.$$ 
From here, using \eqref{eq:Wqp0} and \eqref{W_zero_derivative}, we obtain the two cases announced in the proposition.
\end{proof}

\begin{remark} \label{remark_b_zero_all}
For the case $\sigma=0$ and $\Pi(0,\infty)<\infty$ and the case  $\sigma > 0$,  the functions $\phi_1$ and $\phi_2$, respectively, are both strictly increasing  (since $c>\delta$ in the bounded variation case by Assumption \ref{assumpdelta}), with $\phi_1(-\delta/q) = \phi_2(-\delta/q) = 0$. Hence there exists $\bar{\Lambda} \in (-\delta/q, \infty]$ such that
$b_{\Lambda}=0$ for all $- \delta/q < \Lambda \leq \bar{\Lambda}$ and $b_{\Lambda}>0$ for all $\Lambda>\bar{\Lambda}$.
\begin{enumerate}  
\item  Suppose $\sigma=0$ and $\Pi(0,\infty)<\infty$. Define 
\begin{align*}
\phi_{1}(\infty) := \lim_{\Lambda\to\infty}\phi_{1}(\Lambda)=\frac{q+\Pi(0,\infty)}{c-\delta}.
\end{align*}
  If $\varphi(q)\geq \phi_1(\infty)$, then by Proposition \ref{maxb.1} (2), $\bar{\Lambda} = \infty$.  Otherwise, we must have $\bar{\Lambda} < \infty$ and $\phi_1(\bar{\Lambda}) = \varphi(q)$.
\item  Suppose $\sigma > 0$. Because  $\displaystyle\lim_{\Lambda\to\infty}\phi_{2}(\Lambda)= \infty$, we must have  $\bar{\Lambda} < \infty$. This also implies  $\phi_2(\bar{\Lambda}) = \varphi(q)$.
\item  Suppose $\sigma = 0$ and $\Pi(0,\infty)=\infty$.  Then, we can set $\bar{\Lambda} = - \delta /q$. 
 \end{enumerate}

\end{remark}

\subsection{Verification}
For the case $b_\Lambda > 0$,  by how $b_\Lambda$ is selected as in \eqref{b1}, together with \eqref{vf_1} and \eqref{v9},  we can write
\begin{align} \label{value_func_simplified}
\begin{split}
v_{\Lambda}^{b_\Lambda}(x) &=g_{\Lambda}(b_\Lambda)\left(W^{(q)}(x)+\delta\int_{b_\Lambda}^x\mathbb{W}^{(q)}(x-y)W^{(q)\prime}(y)\diff y\right) \\
&-\Lambda\left(Z^{(q)}(x)+\delta q \int_{b_\Lambda}^x\mathbb{W}^{(q)}(x-y)W^{(q)}(y)\diff y\right)-\delta\overline{\mathbb W}^{(q)}(x-b_\Lambda),\ \text{for $x\geq 0$.}
\end{split}
\end{align}
For the case $b_\Lambda = 0$, the function $v_{\Lambda}^{b_\Lambda} \equiv v_{\Lambda}^{0}$ is given in \eqref{v_0_new}.

Given the spectrally negative L\'evy process $X$, we call a function $F: \R \to \R$ \emph{sufficiently smooth}, if $F$ is continuously differentiable on $(0,\infty)$ when $X$ has paths of bounded variation and is twice continuously differentiable on $(0,\infty)$ when $X$ has paths of unbounded variation. We let $\Gamma$ be the operator acting on a sufficiently smooth function $F$, defined by
\begin{equation*}
\begin{split}
\Gamma F(x) := \gamma F'(x)+\frac{\sigma^2}{2}F''(x) +\int_{(0,\infty)}(F(x-z)-F(x)+F'(x)z\mathbf{1}_{\{0<z\leq1\}})\Pi(\mathrm{d}z), \quad x > 0.
\end{split}
\end{equation*}
The following lemma constitutes standard technology as far as optimal control is concerned. For its proof we refer to that of Lemma 1 in \cite{Loeffen08}.

\begin{lemma}\label{verificationlemma}
Suppose $\hat{D}\in\Theta$ is an admissible dividend strategy such that $v^{\hat{D}}_{\Lambda}$ is sufficiently smooth on $(0,\infty)$, $v_\Lambda^{\hat{D}}(0)\geq -\Lambda$, and for all $x>0$,
\begin{equation}\label{HJB-inequality}
(\Gamma -q) v^{\hat{D}}_{\Lambda}(x)+\sup_{0\leq r\leq\delta}(r-rv^{\hat{D}\prime}_{\Lambda}(x))\leq 0.
\end{equation}
Then $v^{\hat{D}}_{\Lambda}(x)=V_{\Lambda}(x)$ for all $x\geq0$ and hence $\hat{D}$ is an optimal strategy.
\end{lemma}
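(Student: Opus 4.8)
The plan is to prove the reverse inequality $v^D_{\Lambda}(x) \le v^{\hat{D}}_{\Lambda}(x)$ for \emph{every} $D \in \Theta$; since $\hat{D} \in \Theta$ already gives $v^{\hat{D}}_{\Lambda}(x) \le V_{\Lambda}(x)$, the two bounds together force $V_{\Lambda}(x) = v^{\hat{D}}_{\Lambda}(x)$ and hence the optimality of $\hat{D}$. To this end I would first extend the candidate to the whole line by declaring $v^{\hat{D}}_{\Lambda}(y) := -\Lambda$ for $y < 0$, the value collected at ruin; the hypothesis $v^{\hat{D}}_{\Lambda}(0) \ge -\Lambda$ then guarantees $v^{\hat{D}}_{\Lambda} \ge -\Lambda$ at every point to which the controlled surplus can be sent at ruin.

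Fix $D \in \Theta$ with rate $d(\cdot) \in [0,\delta]$ and write $U^D_t = X_t - \int_0^t d(s)\,\diff s$. The core step is to apply the appropriate change-of-variables formula to $e^{-qt} v^{\hat{D}}_{\Lambda}(U^D_t)$: the It\^o--L\'evy formula when $X$ is of unbounded variation (where $v^{\hat{D}}_{\Lambda}$ is $C^2$ on $(0,\infty)$) and the finite-variation It\^o formula when $X$ is of bounded variation (where $C^1$ suffices), exactly the regularity supplied by the ``sufficiently smooth'' hypothesis. Since the absolutely continuous control only perturbs the drift, the finite-variation part of the decomposition is driven by $(\Gamma - q) v^{\hat{D}}_{\Lambda}(U^D_{s-}) - d(s)\, v^{\hat{D}\prime}_{\Lambda}(U^D_{s-})$ plus a local martingale $M$ with $M_0 = 0$.

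Next I would feed in the HJB inequality \eqref{HJB-inequality}. Because $d(s) \in [0,\delta]$, choosing $r = d(s)$ in the supremum gives $(\Gamma - q) v^{\hat{D}}_{\Lambda}(U^D_{s-}) - d(s)\, v^{\hat{D}\prime}_{\Lambda}(U^D_{s-}) \le - d(s)$, whence
\begin{equation*}
e^{-qt} v^{\hat{D}}_{\Lambda}(U^D_t) + \int_0^t e^{-qs}\,\diff D_s \le v^{\hat{D}}_{\Lambda}(x) + M_t.
\end{equation*}
Localizing $M$ by stopping times $T_n \uparrow \infty$, stopping also at $\tau^D$, and taking expectations removes the martingale and yields, after letting $n \to \infty$ (monotone convergence for the nonnegative dividend integral, dominated convergence for the boundary term),
\begin{equation*}
\mathbb{E}_x\!\left[ e^{-q(\tau^D \wedge t)} v^{\hat{D}}_{\Lambda}(U^D_{\tau^D \wedge t}) + \int_0^{\tau^D \wedge t} e^{-qs}\,\diff D_s \right] \le v^{\hat{D}}_{\Lambda}(x).
\end{equation*}
Letting $t \to \infty$ and using that at ruin $U^D_{\tau^D} \le 0$ sits where $v^{\hat{D}}_{\Lambda} \ge -\Lambda$, the boundary term is bounded below by $-\Lambda\, \mathbb{E}_x[e^{-q\tau^D}]$, which recovers precisely $v^D_{\Lambda}(x) \le v^{\hat{D}}_{\Lambda}(x)$.

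I expect the limit $t \to \infty$ to be the main obstacle: one must justify discarding the local martingale and, above all, control the surviving boundary term $e^{-qt} v^{\hat{D}}_{\Lambda}(U^D_t)\mathbf{1}_{\{\tau^D > t\}}$, which is handled by the at-most-$e^{\Phi(q)x}$ growth of the scale-function-based candidate (see \eqref{W_q_limit}) together with $U^D \le X$ and the discounting. A secondary delicate point, present only in the unbounded-variation case, is applying the change-of-variables formula across the possible kink of the extension at $0$: before $\tau^D$ the controlled process spends zero Lebesgue time at the single point $0$ and reaches $0$ continuously only at the ruin instant, where the boundary condition $v^{\hat{D}}_{\Lambda}(0) \ge -\Lambda$ absorbs the discrepancy. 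In the bounded-variation case Assumption \ref{assumpdelta} keeps the drift $c - d(s) > 0$ positive, so ruin occurs by a downward jump with $U^D_{\tau^D} < 0$, making that term exactly $-\Lambda e^{-q\tau^D}$.
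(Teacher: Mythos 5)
Your argument is the standard verification/It\^o argument and coincides with the proof the paper delegates to Lemma 1 of \cite{Loeffen08} (and Lemma 6 of \cite{KLP}): extend the candidate by $-\Lambda$ below zero, apply the appropriate change-of-variables formula to $e^{-qt}v^{\hat D}_\Lambda(U^D_t)$, insert the HJB inequality with $r=d(s)$, localize, and pass to the limit, using $v^{\hat D}_\Lambda(0)\ge -\Lambda$ to control the value at ruin. The one step whose stated justification does not close is the transversality term: since $\E_x\bigl[e^{-qt}e^{\Phi(q)X_t}\bigr]=e^{\Phi(q)x}$ does \emph{not} vanish, the $e^{\Phi(q)x}$-growth bound is insufficient; instead note that $v^{\hat D}_\Lambda$ is the payoff of an admissible strategy with dividend rate bounded by $\delta$, so $|v^{\hat D}_\Lambda|\le \delta/q+|\Lambda|$ everywhere and $e^{-qt}v^{\hat D}_\Lambda(U^D_t)\mathbf{1}_{\{\tau^D>t\}}\to 0$ trivially.
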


We first show that our candidate value function $v^{b_{\Lambda}}_{\Lambda}$   is indeed sufficiently smooth on $(0,\infty)$. 
\begin{lemma} \label{smooth_fit_prob1}
Consider $b_{\Lambda}\geq0$ given by \eqref{b1}. Then $v^{b_{\Lambda}}_{\Lambda}$ is sufficiently smooth on $(0,\infty)$.
\end{lemma}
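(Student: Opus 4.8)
The plan is to reduce the whole statement to a smooth-fit check at the single point $x=b_\Lambda$. Under Assumption \ref{assump_completely_monotone}, Remark \ref{remark_smoothness_scale_function_SN} guarantees that $W^{(q)}$ and $\mathbb{W}^{(q)}$ are infinitely differentiable on $(0,\infty)$; consequently every term appearing in \eqref{vf_1}—the scale functions $W^{(q)},Z^{(q)}$, the convolution integrals, and $\overline{\mathbb{W}}^{(q)}(\cdot-b_\Lambda)$—is $C^\infty$ on each of the open intervals $(0,b_\Lambda)$ and $(b_\Lambda,\infty)$. Hence $v_{\Lambda}^{b_\Lambda}$ is automatically $C^\infty$ away from $b_\Lambda$, and only the junction $x=b_\Lambda$ needs attention. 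When $b_\Lambda=0$ there is no junction at all: the value function is then the single expression \eqref{v_0_new}, a linear combination of $\mathbb{W}^{(q)}$, $\mathbb{Z}^{(q)}$ and $\overline{\mathbb{W}}^{(q)}$, which is $C^\infty$ on all of $(0,\infty)$. So I would dispose of $b_\Lambda=0$ in one line and spend the rest of the argument on $b_\Lambda>0$.

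For $b_\Lambda>0$ I would use the two representations \eqref{vf_1_x_less} (valid on $(0,b_\Lambda]$) and \eqref{value_func_simplified} (valid on $[b_\Lambda,\infty)$), each smooth up to the boundary from its own side, and match the one-sided derivatives at $b_\Lambda$ to the required order. The algebraic engine of the computation is the identity $\xi_{\Lambda}(b_\Lambda)=g_{\Lambda}(b_\Lambda)$, which holds because $b_\Lambda>0$ forces $\xi_{\Lambda}'(b_\Lambda)=0$ in \eqref{v9} (using $W^{(q)\prime}(b_\Lambda)>0$), together with the defining relation \eqref{v9.0}, i.e.\ $g_{\Lambda}(b_\Lambda)W^{(q)\prime}(b_\Lambda)=1+q\Lambda W^{(q)}(b_\Lambda)$. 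Continuity at $b_\Lambda$ is immediate because all the integrals from $b_\Lambda$ to $x$ and the term $\overline{\mathbb{W}}^{(q)}(x-b_\Lambda)$ vanish there, while the coefficients of $W^{(q)}(b_\Lambda)$ agree precisely because $\xi_{\Lambda}(b_\Lambda)=g_{\Lambda}(b_\Lambda)$. Differentiating \eqref{vf_1_x_less} gives $(v_{\Lambda}^{b_\Lambda})'(b_\Lambda-)=g_{\Lambda}(b_\Lambda)W^{(q)\prime}(b_\Lambda)-q\Lambda W^{(q)}(b_\Lambda)=1$; and differentiating \eqref{value_func_simplified} by Leibniz' rule on the convolutions (where the boundary contributions carry the factor $1+\delta\mathbb{W}^{(q)}(0)$ and the derivative of $\overline{\mathbb{W}}^{(q)}$ carries $\delta\mathbb{W}^{(q)}(0)$) one finds, after invoking \eqref{v9.0}, that $(v_{\Lambda}^{b_\Lambda})'(b_\Lambda+)=1$ as well. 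This settles the bounded-variation case, where $C^1$ is all that sufficient smoothness requires.

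For the unbounded-variation case one additionally needs $C^2$ across $b_\Lambda$, and this is where I expect the only genuine subtlety. Here $W^{(q)}(0)=\mathbb{W}^{(q)}(0)=0$, so the first-order boundary terms drop out, but a second Leibniz differentiation of the convolutions produces a term proportional to $\mathbb{W}^{(q)\prime}(0+)$, which by \eqref{W_zero_derivative} is \emph{infinite} exactly when $\sigma=0$ and $\Pi(0,\infty)=\infty$. The observation that rescues the argument is that the total coefficient multiplying $\mathbb{W}^{(q)\prime}(0+)$ in $(v_{\Lambda}^{b_\Lambda})''(b_\Lambda+)$ is precisely $g_{\Lambda}(b_\Lambda)W^{(q)\prime}(b_\Lambda)-q\Lambda W^{(q)}(b_\Lambda)-1$, which vanishes by \eqref{v9.0}; the potentially divergent contribution therefore cancels, and what survives is $g_{\Lambda}(b_\Lambda)W^{(q)\prime\prime}(b_\Lambda)-q\Lambda W^{(q)\prime}(b_\Lambda)$, matching exactly $(v_{\Lambda}^{b_\Lambda})''(b_\Lambda-)$ obtained from \eqref{vf_1_x_less}. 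Hence the second derivatives agree and $v_{\Lambda}^{b_\Lambda}$ is $C^2$ at $b_\Lambda$. The main technical care throughout is the repeated differentiation of the integrals $\int_{b_\Lambda}^x\mathbb{W}^{(q)}(x-y)\,(\cdot)\,\diff y$: I would perform the substitution $u=x-y$ so that the surviving boundary terms are evaluated where $\mathbb{W}^{(q)}$ and its derivative are finite, thereby isolating the single delicate term $\mathbb{W}^{(q)\prime}(0+)$ whose coefficient then cancels as described.
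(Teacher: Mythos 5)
Your proof is correct and follows essentially the same route as the paper: smoothness away from $b_\Lambda$ is automatic from Remark \ref{remark_smoothness_scale_function_SN}, the case $b_\Lambda=0$ is handled via \eqref{v_0_new}, and the smooth fit at $b_\Lambda>0$ rests on exactly the identity $g_{\Lambda}(b_{\Lambda})W^{(q)\prime}(b_{\Lambda})-q\Lambda W^{(q)}(b_{\Lambda})=1$ from \eqref{v9.0}. The only organizational difference is that the paper performs the integration by parts already at the first-derivative stage (obtaining \eqref{dvf_right}, in which the boundary term carrying that coefficient cancels and $\mathbb{W}^{(q)\prime}$ only ever appears inside a convergent integral), whereas you differentiate twice directly and cancel the potentially infinite $\mathbb{W}^{(q)\prime}(0+)$ contribution at the second-derivative stage -- both hinge on the same cancellation.
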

\begin{proof}
(i) Let us consider the case $b_{\Lambda}>0$. For $x\not= b_{\Lambda}$, by differentiating  \eqref{value_func_simplified},
\begin{align}\label{firstder}
v^{b_{\Lambda}\prime}_{\Lambda}(x)&= g_{\Lambda} (b_{\Lambda})\left(W^{(q)\prime}(x)+ \delta \left[\mathbb{W}^{(q)}(0)W^{(q)\prime}(x)+\int_{b_{\Lambda}}^x\mathbb{W}^{(q)\prime}(x-y)W^{(q)\prime}(y)\diff y\right]1_{\{x>b_{\Lambda}\}}\right)\notag\\
&\quad-q\Lambda\left(W^{(q)}(x)+ \delta \left[\mathbb{W}^{(q)}(0)W^{(q)}(x)+  \int_{b_{\Lambda}}^x\mathbb{W}^{(q)\prime}(x-y)W^{(q)}(y)\diff y\right]1_{\{x>b_{\Lambda}\}}\right)\notag\\&\quad -\delta\mathbb W^{(q)}(x-b_{\Lambda})\notag\\
&=g_{\Lambda}(b_{\Lambda})\left(W^{(q)\prime}(x)+\delta\int_{b_{\Lambda}}^x\mathbb{W}^{(q)}(x-y)W^{(q)\prime\prime}(y)\diff y\right)\notag\\
&\quad-q\Lambda\left(W^{(q)}(x)+\delta  \int_{b_{\Lambda}}^x\mathbb{W}^{(q)}(x-y)W^{(q)\prime}(y)\diff y\right)\notag\\
&\quad+\delta\mathbb W^{(q)}(x-b_{\Lambda}) \big(g_{\Lambda}(b_{\Lambda})W^{(q)\prime}(b_{\Lambda})-\Lambda q W^{(q)}(b_{\Lambda})-1 \big),
\end{align}
where the last equality holds by integration by parts. Now by the definition of $g_\Lambda$ as in \eqref{v9.0}, we have
\begin{align}\label{dvf_right}
\begin{split}
v^{b_{\Lambda}\prime}_{\Lambda}(x)	&=g_{\Lambda}(b_{\Lambda})\left(W^{(q)\prime}(x)+\delta\int_{b_{\Lambda}}^x\mathbb{W}^{(q)}(x-y)W^{(q)\prime\prime}(y)\diff y\right) \\
&\quad-q\Lambda\left(W^{(q)}(x)+\delta  \int_{b_{\Lambda}}^x\mathbb{W}^{(q)}(x-y)W^{(q)\prime}(y)\diff y\right).
\end{split}
\end{align}
Differentiating this further, we obtain for $x\not=b_{\Lambda}$
\begin{align}\label{sdvf_right}
\begin{split}
v^{b_{\Lambda}\prime\prime}_{\Lambda}(x)&=g_{\Lambda}(b_{\Lambda})\left((1+\delta\mathbb{W}^{(q)}(0)1_{\{x>b_{\Lambda}\}})W^{(q)\prime\prime}(x)+\delta\int_{b_{\Lambda}}^x\mathbb{W}^{(q)\prime}(x-y)W^{(q)\prime\prime}(y)\diff y\right) \\
&\quad-q\Lambda\left((1+\delta\mathbb{W}^{(q)}(0)1_{\{x>b_{\Lambda}\}})W^{(q)\prime}(x)+\delta  \int_{b_{\Lambda}}^x\mathbb{W}^{(q)\prime}(x-y)W^{(q)\prime}(y)\diff y\right).
\end{split}
\end{align}

By Remark \ref{remark_smoothness_scale_function_SN}, \eqref{dvf_right}, and \eqref{sdvf_right}, the functions $v^{b_{\Lambda}\prime}_{\Lambda}$ and $v^{b_{\Lambda}\prime\prime}_{\Lambda}$ are continuous on $\mathbb{R}\backslash\{b_{\Lambda}\}$. Regarding the continuity at $b_\Lambda$, from \eqref{dvf_right} we have $v^{b_{\Lambda}\prime}_{\Lambda}(b_{\Lambda}+)=v^{b_{\Lambda}\prime}_{\Lambda}(b_{\Lambda}-)$. In particular, for the case that $X$ is of unbounded variation (where $\mathbb{W}^{(q)}(0) = 0$ as in \eqref{eq:Wqp0}), we have, using \eqref{sdvf_right}, that
\begin{align*}
v^{b_{\Lambda}\prime\prime}_{\Lambda}(b_{\Lambda}+)-v^{b_{\Lambda}\prime\prime}_{\Lambda}(b_{\Lambda}-)=0.
\end{align*}
(ii) For the case $b_{\Lambda}=0$, the result follows from a direct application of Lemma \ref{lemma_zero_case}.
\end{proof}

In order to prove the HJB inequality \eqref{HJB-inequality}, we use a more friendly sufficient condition. For the proof of the following result we refer to the proof of Lemma 7 in \cite{KLP}. 

\begin{lemma}\label{tussenlemma} 
The value function $v^{b_{\Lambda}}_{\Lambda}$ satisfies \eqref{HJB-inequality} if and only if
\begin{equation}\label{equiv_inequality2}
\begin{cases}
v^{b_{\Lambda}\prime}_{\Lambda}(x)\geq1, & \text{if $0<x\leq b_{\Lambda}$}, \\
v^{b_{\Lambda}\prime}_{\Lambda}(x)\leq1, & \text{if $x>b_{\Lambda}$}.
\end{cases}
\end{equation}
\end{lemma}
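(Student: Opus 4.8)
The plan is to reduce \eqref{HJB-inequality} to a pair of local statements, one on $(0,b_\Lambda)$ and one on $(b_\Lambda,\infty)$, by combining the explicit form of the running supremum with the generator equations that $v_\Lambda^{b_\Lambda}$ satisfies on each region.

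First I would simplify the supremum. Writing $\sup_{0\le r\le\delta}(r-r\,v_\Lambda^{b_\Lambda\prime}(x))=\sup_{0\le r\le\delta} r\,\bigl(1-v_\Lambda^{b_\Lambda\prime}(x)\bigr)$, this is the supremum of a linear function of $r$ over $[0,\delta]$, so it equals $0$ (attained at $r=0$) when $v_\Lambda^{b_\Lambda\prime}(x)\ge1$, and equals $\delta\bigl(1-v_\Lambda^{b_\Lambda\prime}(x)\bigr)$ (attained at $r=\delta$) when $v_\Lambda^{b_\Lambda\prime}(x)\le1$.

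Second I would record the two generator identities. For $x\in(0,b_\Lambda)$, \eqref{vf_1_x_less} gives $v_\Lambda^{b_\Lambda}=\xi_\Lambda(b_\Lambda)W^{(q)}-\Lambda Z^{(q)}$; since $W^{(q)}$ and $Z^{(q)}$ are $q$-harmonic for $X$, i.e.\ $(\Gamma-q)W^{(q)}=(\Gamma-q)Z^{(q)}=0$ on $(0,\infty)$, we obtain $(\Gamma-q)v_\Lambda^{b_\Lambda}(x)=0$ there. For $x>b_\Lambda$, the refracted process evolves as $Y=X-\delta t$ with dividends flowing at rate $\delta$, so the Feynman--Kac/resolvent theory behind \eqref{div_ac}--\eqref{lap_ac} yields $(\Gamma_Y-q)v_\Lambda^{b_\Lambda}(x)+\delta=0$, where $\Gamma_Y:=\Gamma-\delta\tfrac{\mathrm d}{\mathrm dx}$ is the generator of $Y$; equivalently $(\Gamma-q)v_\Lambda^{b_\Lambda}(x)=\delta\bigl(v_\Lambda^{b_\Lambda\prime}(x)-1\bigr)$.

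Third I would assemble the equivalence region by region. On $(0,b_\Lambda)$ the first identity reduces \eqref{HJB-inequality} to requiring the supremum to be $\le0$, which by the first step happens exactly when $v_\Lambda^{b_\Lambda\prime}(x)\ge1$. On $(b_\Lambda,\infty)$, substituting the second identity shows that when $v_\Lambda^{b_\Lambda\prime}(x)\le1$ the left-hand side of \eqref{HJB-inequality} equals $\delta(v_\Lambda^{b_\Lambda\prime}(x)-1)+\delta(1-v_\Lambda^{b_\Lambda\prime}(x))=0$, while when $v_\Lambda^{b_\Lambda\prime}(x)>1$ it equals $\delta(v_\Lambda^{b_\Lambda\prime}(x)-1)>0$; hence the inequality holds there precisely when $v_\Lambda^{b_\Lambda\prime}(x)\le1$. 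Together these give \eqref{equiv_inequality2}. The step I expect to be the main obstacle is the rigorous justification of the refracted identity $(\Gamma_Y-q)v_\Lambda^{b_\Lambda}+\delta=0$ on $(b_\Lambda,\infty)$: because $X$ has negative jumps, evaluating $\Gamma_Y v_\Lambda^{b_\Lambda}$ at a point $x>b_\Lambda$ involves the values of $v_\Lambda^{b_\Lambda}$ below $b_\Lambda$, so this is not a purely local computation. It rests on the fluctuation identities for refracted L\'evy processes in \cite{KyLo} and on the smoothness from Lemma \ref{smooth_fit_prob1}, which ensures the integro-differential operators are well defined on $v_\Lambda^{b_\Lambda}$; this is precisely the mechanism in the proof of Lemma 7 of \cite{KLP}, to which the argument can ultimately be referred.
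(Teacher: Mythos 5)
Your argument is correct and is essentially the proof the paper has in mind: the paper simply refers to Lemma 7 of \cite{KLP}, whose mechanism is exactly your reduction of the supremum to $\delta\bigl(1-v_\Lambda^{b_\Lambda\prime}(x)\bigr)^+$ combined with the two generator identities $(\Gamma-q)v_\Lambda^{b_\Lambda}=0$ on $(0,b_\Lambda)$ and $(\Gamma-q)v_\Lambda^{b_\Lambda}=\delta\bigl(v_\Lambda^{b_\Lambda\prime}-1\bigr)$ on $(b_\Lambda,\infty)$. You correctly identify the only delicate step (the non-local identity above $b_\Lambda$, which rests on the refracted-process fluctuation identities and the smoothness from Lemma \ref{smooth_fit_prob1}), and deferring it to \cite{KLP} matches what the paper itself does.
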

In order to verify inequality \eqref{equiv_inequality2}, we will need the following result.
\begin{lemma}[\cite{Loeffen08}, Theorem 2]\label{compmon}
Under Assumption \ref{assump_completely_monotone}, the $q$-scale function $\mathbb{W}^{(q)}$ can be written as 
\begin{align} \label{W_mathbb_completely_monotone}
	\mathbb{W}^{(q)}(x)=\varphi'(q)e^{\varphi(q)x}-\hat{f}(x),
\end{align}
where $\hat{f}$ is a non-negative, completely monotone function given by $\hat{f}(x)=\displaystyle\int_{0+}^{\infty}e^{-xt}\hat{\mu}(\diff t)$, where $\hat{\mu}$ is a finite measure on $(0,\infty)$. Moreover, $\mathbb{W}^{(q)\prime}$ is strictly log-convex  (and hence convex) on $(0,\infty)$. 
\end{lemma}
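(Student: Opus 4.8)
The plan is to observe that the statement is Theorem 2 of \cite{Loeffen08} applied not to $X$ but to the auxiliary process $Y=\{X_t-\delta t:t\ge 0\}$, so that the substance of the argument is to check that $Y$ falls within the scope of that theorem and then to transcribe its conclusions with $\psi,\Phi(q)$ replaced by $\psi_Y,\varphi(q)$. First I would verify the transfer of hypotheses: subtracting the deterministic drift $\delta t$ changes neither the Gaussian coefficient $\sigma$ nor the jump measure, so $Y$ is a spectrally negative \lev process whose \lev measure is the same $\Pi$ with the same completely monotone density $\pi$, and by Assumption \ref{assumpdelta} it is not the negative of a subordinator. Hence Assumption \ref{assump_completely_monotone} holds verbatim for $Y$, and Theorem 2 of \cite{Loeffen08} may be invoked for $\mathbb{W}^{(q)}$.

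For the representation \eqref{W_mathbb_completely_monotone}, I would start from the Laplace-transform characterisation in \eqref{scale_function_laplace}, $\int_0^\infty e^{-\theta x}\mathbb{W}^{(q)}(x)\diff x=(\psi_Y(\theta)-q)^{-1}$ for $\theta>\varphi(q)$, and isolate the dominant singularity of the right-hand side. Since $\psi_Y$ is strictly convex with $\psi_Y(\varphi(q))=q$, the point $\theta=\varphi(q)$ is a simple pole of $(\psi_Y(\theta)-q)^{-1}$ with residue $1/\psi_Y'(\varphi(q))=\varphi'(q)$. Subtracting this pole and inverting, the simple fraction $\varphi'(q)(\theta-\varphi(q))^{-1}$ produces the leading term $\varphi'(q)e^{\varphi(q)x}$, while the remainder is the Laplace transform of $-\hat f$. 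The consistency of the leading coefficient can be cross-checked against \eqref{W_q_limit}, which gives $e^{-\varphi(q)x}\mathbb{W}^{(q)}(x)\nearrow\psi_Y'(\varphi(q))^{-1}=\varphi'(q)$.

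For the log-convexity, I would differentiate \eqref{W_mathbb_completely_monotone} to get
\[
\mathbb{W}^{(q)\prime}(x)=\varphi'(q)\varphi(q)e^{\varphi(q)x}+\int_{0+}^{\infty}t\,e^{-xt}\,\hat\mu(\diff t),
\]
using $-\hat f'(x)=\int_{0+}^\infty t\,e^{-xt}\hat\mu(\diff t)$. Two classical facts then finish the argument: a completely monotone function is log-convex (Cauchy--Schwarz applied to its representing measure gives $(g')^2\le g\,g''$), and a sum of log-convex functions is log-convex (H\"older). The second summand is completely monotone, hence log-convex, and the first is log-affine, hence log-convex; their sum is therefore log-convex. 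Equivalently, writing $\mathbb{W}^{(q)\prime}(x)=\int_{\R}e^{sx}\nu(\diff s)$ with $\nu=\varphi'(q)\varphi(q)\,\delta_{\varphi(q)}$ plus the image of $t\,\hat\mu(\diff t)$ under $t\mapsto-t$, Cauchy--Schwarz yields $(\mathbb{W}^{(q)\prime\prime})^2\le\mathbb{W}^{(q)\prime}\,\mathbb{W}^{(q)\prime\prime\prime}$, with strict inequality because $\nu$ charges both $\varphi(q)>0$ and $(-\infty,0)$, so that the \emph{exponent} $s$ is non-degenerate under $\nu$; this gives strict log-convexity.

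The hard part is the substantive content of the decomposition, namely that the remainder $\hat f$ is genuinely completely monotone with a finite representing measure $\hat\mu$ on $(0,\infty)$. This is exactly where Assumption \ref{assump_completely_monotone} is used: one must show that $(\psi_Y(\theta)-q)^{-1}$, once its dominant pole is removed, is the Laplace transform of a nonnegative finite measure, which rests on the Stieltjes/Pick-function structure of $\psi_Y$ forced by the complete monotonicity of $\pi$. Since this is precisely Theorem 2 of \cite{Loeffen08}, I would import it directly rather than reprove it; the only genuinely new observation needed is the transfer of hypotheses from $X$ to $Y$ carried out in the first step.
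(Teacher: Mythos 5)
Your proposal is correct and matches the paper's approach exactly: the paper gives no proof of this lemma but simply cites Theorem 2 of \cite{Loeffen08} applied to the drift-adjusted process $Y$, whose L\'evy measure coincides with that of $X$ and hence retains the completely monotone density, with Assumption \ref{assumpdelta} guaranteeing $Y$ is not the negative of a subordinator. Your additional sketches of the pole-removal decomposition and the Cauchy--Schwarz argument for strict log-convexity are accurate but go beyond what the paper records, since both are already contained in the imported theorem.
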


\begin{remark}\label{com_mon_forW}
We note that an analogous result to Lemma \ref{compmon} holds for $W^{(q)}$, with $f$ and $\mu$ playing the role of $\hat{f}$ and $\hat{\mu}$.
\end{remark}

We now show the main theorem of this section by verifying the inequality \eqref{equiv_inequality2}. 
\begin{theorem}\label{L.V.1}
The optimal strategy for \eqref{p_Lambda_zero} consists of a refraction strategy at level $b_{\Lambda}$, given by \eqref{b1}, and the corresponding value function is given by \eqref{value_func_simplified}.
\end{theorem}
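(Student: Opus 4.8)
The plan is to apply the verification lemma (Lemma~\ref{verificationlemma}) to the candidate $\hat D = D^{b_\Lambda}$, whose payoff $v^{b_\Lambda}_\Lambda$ is given by \eqref{value_func_simplified} when $b_\Lambda>0$ and by \eqref{v_0_new} when $b_\Lambda=0$. Sufficient smoothness is already supplied by Lemma~\ref{smooth_fit_prob1}, and Lemma~\ref{tussenlemma} reduces the HJB inequality \eqref{HJB-inequality} to the pair of gradient inequalities \eqref{equiv_inequality2}. Hence the entire argument comes down to three checks: the boundary condition $v^{b_\Lambda}_\Lambda(0)\ge -\Lambda$, the lower bound $v^{b_\Lambda\prime}_\Lambda\ge 1$ on $(0,b_\Lambda]$, and the upper bound $v^{b_\Lambda\prime}_\Lambda\le 1$ on $(b_\Lambda,\infty)$. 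For the boundary condition, evaluating \eqref{vf_1_x_less} at $x=0$ gives $v^{b_\Lambda}_\Lambda(0)=\xi_\Lambda(b_\Lambda)W^{(q)}(0)-\Lambda$; since $W^{(q)}(0)\ge 0$ and $\xi_\Lambda(b_\Lambda)=\max_{b\ge 0}\xi_\Lambda(b)\ge \xi_\Lambda(0)>0$ (positivity of $\xi_\Lambda(0)$ following from \eqref{xi_0_new} together with Assumptions~\ref{assump_SN_param} and \ref{assumpdelta}), the claim holds.

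For the lower bound I would use the identity $g_\Lambda(x)W^{(q)\prime}(x)=1+q\Lambda W^{(q)}(x)$ coming from \eqref{v9.0}. Combined with \eqref{vf_1_x_less} and the fact that $\xi_\Lambda(b_\Lambda)=g_\Lambda(b_\Lambda)$ when $b_\Lambda>0$ (by \eqref{b1} and continuity), this yields
\begin{align*}
v^{b_\Lambda\prime}_\Lambda(x)-1=\bigl(g_\Lambda(b_\Lambda)-g_\Lambda(x)\bigr)W^{(q)\prime}(x),\qquad 0<x\le b_\Lambda.
\end{align*}
By Proposition~\ref{maxb.1}(1) we have $b_\Lambda\le a_\Lambda$, so $g_\Lambda$ is nondecreasing on $(0,b_\Lambda]$ by Remark~\ref{remark_about_g}(2); hence $g_\Lambda(x)\le g_\Lambda(b_\Lambda)$ and the right-hand side is nonnegative. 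The case $b_\Lambda=0$ makes this range empty and is checked directly from \eqref{v_0_new}.

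The main obstacle is the upper bound on $(b_\Lambda,\infty)$. Writing $k(y):=g_\Lambda(b_\Lambda)W^{(q)\prime}(y)-q\Lambda W^{(q)}(y)$, so that $k(y)-1=(g_\Lambda(b_\Lambda)-g_\Lambda(y))W^{(q)\prime}(y)$ and $k(b_\Lambda)=1$, one rewrites \eqref{dvf_right} as $v^{b_\Lambda\prime}_\Lambda(x)=k(x)+\delta\int_{b_\Lambda}^x\mathbb{W}^{(q)}(x-y)k'(y)\diff y$ and, integrating by parts using $k(b_\Lambda)=1$, recasts it as
\begin{align*}
v^{b_\Lambda\prime}_\Lambda(x)-1=\bigl(k(x)-1\bigr)\bigl(1+\delta\mathbb{W}^{(q)}(0)\bigr)+\delta\int_{b_\Lambda}^x\mathbb{W}^{(q)\prime}(x-y)\bigl(k(y)-1\bigr)\diff y,\qquad x>b_\Lambda.
\end{align*}
The difficulty is that the integrand changes sign: $k(y)-1\le 0$ on $(b_\Lambda,a_\Lambda]$, where $g_\Lambda\ge g_\Lambda(b_\Lambda)$, but $g_\Lambda(y)$ may fall below $g_\Lambda(b_\Lambda)$ for large $y$, turning the integrand positive. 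This is exactly where the strict log-convexity of $\mathbb{W}^{(q)\prime}$ from Lemma~\ref{compmon} is essential: it lets one weigh the late positive contributions against the early negative ones, in the spirit of the proof of Lemma~7 in \cite{KLP}.

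Concretely, following \cite{KLP} I would argue that $v^{b_\Lambda\prime}_\Lambda$ can meet the level $1$ only from above on $(b_\Lambda,\infty)$, i.e.\ $v^{b_\Lambda\prime\prime}_\Lambda(x)\le 0$ at any $x>b_\Lambda$ with $v^{b_\Lambda\prime}_\Lambda(x)=1$, with log-convexity of $\mathbb{W}^{(q)\prime}$ providing the required sign control in the representation above. This is then combined with the boundary data $v^{b_\Lambda\prime}_\Lambda(b_\Lambda)=1$ and
\begin{align*}
v^{b_\Lambda\prime\prime}_\Lambda(b_\Lambda+)=\bigl(1+\delta\mathbb{W}^{(q)}(0)\bigr)k'(b_\Lambda)=-\bigl(1+\delta\mathbb{W}^{(q)}(0)\bigr)g_\Lambda'(b_\Lambda)W^{(q)\prime}(b_\Lambda)\le 0,
\end{align*}
obtained from \eqref{sdvf_right} and the relation $g_\Lambda'=-k'/W^{(q)\prime}$, noting $g_\Lambda'(b_\Lambda)\ge 0$ since $b_\Lambda\le a_\Lambda$ (Remark~\ref{remark_about_g}(2)); the asymptotics \eqref{W_q_limit} confirm the behaviour of $v^{b_\Lambda\prime}_\Lambda$ at infinity is consistent with $v^{b_\Lambda\prime}_\Lambda\le 1$. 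Since $v^{b_\Lambda\prime}_\Lambda$ departs from $1$ nonincreasingly at $b_\Lambda$ and can never re-cross $1$ upwards, \eqref{equiv_inequality2} follows on $(b_\Lambda,\infty)$. With all three conditions verified for both $b_\Lambda>0$ and $b_\Lambda=0$, Lemma~\ref{verificationlemma} yields $v^{b_\Lambda}_\Lambda=V_\Lambda$ and the optimality of the refraction strategy $D^{b_\Lambda}$.
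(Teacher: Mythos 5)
Your overall architecture matches the paper's: reduce to Lemma~\ref{verificationlemma} via Lemmas~\ref{smooth_fit_prob1} and \ref{tussenlemma}, check the boundary condition $v^{b_\Lambda}_\Lambda(0)\ge-\Lambda$, prove $v^{b_\Lambda\prime}_\Lambda\ge 1$ on $(0,b_\Lambda]$ from $\xi_\Lambda(b_\Lambda)=g_\Lambda(b_\Lambda)\ge g_\Lambda(x)$, and then handle $v^{b_\Lambda\prime}_\Lambda\le 1$ on $(b_\Lambda,\infty)$. The first three items are correct and essentially identical to the paper. Your integration-by-parts identity
$v^{b_\Lambda\prime}_\Lambda(x)-1=(k(x)-1)(1+\delta\mathbb{W}^{(q)}(0))+\delta\int_{b_\Lambda}^x\mathbb{W}^{(q)\prime}(x-y)(k(y)-1)\diff y$
also checks out algebraically.

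The genuine gap is the step you yourself flag as ``the main obstacle.'' Having correctly observed that $k(y)-1$ changes sign on $(b_\Lambda,\infty)$, you resolve it by asserting that ``log-convexity of $\mathbb{W}^{(q)\prime}$ provides the required sign control'' so that $v^{b_\Lambda\prime}_\Lambda$ can only meet level $1$ from above; no derivation is given, and it is not clear how log-convexity of $\mathbb{W}^{(q)\prime}$ alone forces $v^{b_\Lambda\prime\prime}_\Lambda(x)\le 0$ at a hypothetical up-crossing in your representation. This is precisely the point where the paper does real work, and it uses a different mechanism: by Lemma~\ref{com_mon} one writes $v^{b_\Lambda\prime}_\Lambda(x)=\int_{0+}^\infty e^{-tx}l(t)\hat\mu(\diff t)$ with $l$ as in \eqref{def_g_initial}, shows $l''\le 0$ using $g_\Lambda(b_\Lambda)W^{(q)\prime}(y)\ge 1+q\Lambda W^{(q)}(y)$ on $(0,b_\Lambda]$, and combines concavity with $l(0)=q\Lambda+\delta>0$ (Assumption~\ref{assump_SN_param}) to conclude that $l$ changes sign exactly once, from positive to negative, at some $p$. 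This single-crossing property yields $v^{b_\Lambda\prime\prime}_\Lambda(x)\le e^{-(x-b_\Lambda)p}v^{b_\Lambda\prime\prime}_\Lambda(b_\Lambda)\le 0$ for all $x>b_\Lambda$, i.e.\ \emph{global} monotonicity of $v^{b_\Lambda\prime}_\Lambda$ on $(b_\Lambda,\infty)$, which together with $v^{b_\Lambda\prime}_\Lambda(b_\Lambda)=1$ finishes the case. Without an analogue of this single-crossing argument your proof is incomplete. A second, smaller omission: for $b_\Lambda=0$ you only note that the lower-bound range is empty, but you never verify $v^{0\prime}_\Lambda\le 1$ on $(0,\infty)$; the paper does this by showing $v^{0\prime\prime}_\Lambda<0$ via the complete monotonicity of $\hat f$ in \eqref{excu} and then checking $v^{0\prime}_\Lambda(0+)\le 1$ against the criteria of Proposition~\ref{maxb.1}(2).
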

\begin{proof}
By Lemmas \ref{verificationlemma}, \ref{smooth_fit_prob1}, and \ref{tussenlemma}, it is sufficient to verify \eqref{equiv_inequality2}, and the condition that $v^{b_{\Lambda}}_{\Lambda}(0)\geq -\Lambda$.\\
(i) First, consider the case $b_{\Lambda}>0$. In this case, recall that $\xi_{\Lambda}(b_{\Lambda})=g_{\Lambda}(b_{\Lambda})$ (implied by $\xi'_\Lambda (b_\Lambda) = 0$ and the expression \eqref{v9}).\\
(1) Suppose that $x\leq b_{\Lambda}$. Since $g_{\Lambda}$ is increasing on $(0,a_{\Lambda})$ by Remark \ref{remark_about_g} 
(2) and $b_{\Lambda}\leq a_{\Lambda}$ by Proposition \ref{maxb.1} (1), we obtain
\begin{equation}\label{eq.1}
\xi_{\Lambda}(b_{\Lambda})=g_{\Lambda}(b_{\Lambda})\geq g_{\Lambda}(x)=\dfrac{1+q\Lambda W^{(q)}(x)}{W^{(q)\prime}(x)},\ \text{for all}\ 0 < x\leq b_{\Lambda}. 
\end{equation}
Applying \eqref{eq.1} in \eqref{dvf_right}, it follows that $v^{b_{\Lambda}\prime}_{\Lambda}(x) \geq1$.\\
(2) Now suppose that $x>b_{\Lambda}$.  To show the desired inequality, we first rewrite $v^{b_{\Lambda}\prime}_{\Lambda}$ using the expression of the scale function given in \eqref{W_mathbb_completely_monotone}.  The proof of the following lemma is deferred Appendix \ref{proof_lemma_com_mon}.
\begin{lemma}\label{com_mon}
For $x > b_\Lambda$, we have
\begin{align}\label{form_cmf}
v^{b_{\Lambda}\prime}_{\Lambda}(x)&=\int_{0+}^{\infty}e^{-tx} l(t) \hat{\mu}(\diff t),
\end{align} 
where
\begin{align} \label{def_g_initial}
l(t) &:= g_{\Lambda}(b_{\Lambda})\left((1-\delta W^{(q)}(0))t-\delta t\int_0^{b_{\Lambda}}e^{ty}W^{(q)\prime}(y)\diff y\right)\notag\\
&\quad+q\Lambda\left(1+ \delta t\int_0^{b_{\Lambda}}e^{ty}W^{(q)}(y)\diff y\right)+\delta e^{tb_{\Lambda}}.
\end{align}
\end{lemma}
Differentiating \eqref{def_g_initial} twice, we have
\begin{align}\label{def_g}
l''(t)&= -\delta g_{\Lambda}(b_{\Lambda})\left(2\int_0^{b_{\Lambda}}ye^{ty}W^{(q)\prime}(y)\diff y+t\int_0^{b_{\Lambda}}y^2e^{ty}W^{(q)\prime}(y)\diff y\right)+\delta b_{\Lambda}^2e^{tb_{\Lambda}}\notag\\
&\quad+ \delta q\Lambda\left(2\int_0^{b_{\Lambda}}ye^{ty}W^{(q)}(y)\diff y+t\int_0^{b_{\Lambda}}y^2e^{ty}W^{(q)}(y)\diff y\right).
\end{align}
On the other hand, using \eqref{eq.1} we have that   $g_{\Lambda}(b_{\Lambda})W^{(q)\prime}(y)\geq 1+q\Lambda W^{(q)}(y)$  for all $y\in(0,b_{\Lambda}]$. Hence,
\begin{align*}
l''(t)&\leq  -\delta \left(2\int_0^{b_{\Lambda}}ye^{ty}(1+q\Lambda W^{(q)}(y))\diff y+t\int_0^{b_{\Lambda}}y^2e^{ty}(1+q\Lambda W^{(q)}(y))\diff y\right)+\delta b_{\Lambda}^2e^{tb_{\Lambda}}\notag\\
&\quad+\delta q\Lambda\left(2\int_0^{b_{\Lambda}}ye^{ty}W^{(q)}(y)\diff y+t\int_0^{b_{\Lambda}}y^2e^{ty}W^{(q)}(y)\diff y\right)\\
&=-\delta \biggl(2\int_0^{b_{\Lambda}}ye^{ty}\diff y+t\int_0^{b_{\Lambda}}y^2e^{ty}\diff y\biggr)+\delta b_{\Lambda}^2e^{tb_{\Lambda}}=0.
\end{align*}
Therefore, $l$ is a concave function. In addition, since $l(0)=q\Lambda+\delta$, which is positive by Assumption \ref{assump_SN_param}, and recalling $x > b_\Lambda$, it follows that there exists $0<p\leq\infty$ such that $l$ is positive on $(0,p)$ and negative on $(p,\infty)$. Consequently, 
\begin{align} \label{inequality_l}
\mathrm{e}^{-(x-b_{\Lambda})t}l(t)\geq \mathrm{e}^{-(x-b_{\Lambda})p}l(t), \quad t>0.
\end{align} Now we note from \eqref{def_g_initial} that there exists a constant $C(b_{\Lambda})$  independent of $t$ such that 
$|l(t)|\leq C(b_{\Lambda})(1+t)e^{tb_{\Lambda}}$.
Therefore using the fact that $x>b_{\Lambda}$ and dominated convergence, we can take the derivative inside the integral in \eqref{form_cmf} and obtain
\begin{align}\label{concavity}
v^{b_{\Lambda}\prime\prime}_{\Lambda}(x)&=   -\int_{0+}^{\infty} \mathrm{e}^{-(x-b_{\Lambda})t}\mathrm{e}^{-b_{\Lambda}t}t l(t)\hat{\mu}(\diff t)\\\nonumber
&\leq -\mathrm{e}^{-(x-b_{\Lambda})p}\int_{0+}^{\infty} \mathrm{e}^{-b_{\Lambda}t}tl(t) \hat{\mu}(\diff t) \\\nonumber
&=  \mathrm{e}^{-(x-b_{\Lambda})p} v^{b_{\Lambda}\prime\prime}_{\Lambda}(b_{\Lambda}),
\end{align}
where the inequality holds by \eqref{inequality_l}. On the other hand, Proposition \ref{maxb.1} implies that $b_{\Lambda}\leq a_{\Lambda}$, and hence, by Remark \ref{remark_about_g} (2), 
\begin{align*}
0\leq g'_{\Lambda}(b_{\Lambda})=q\Lambda-\dfrac{W^{(q)\prime\prime}(b_{\Lambda})}{W^{(q)\prime}(b_{\Lambda})}g_{\Lambda}(b_{\Lambda}).
\end{align*}
Therefore, \eqref{sdvf_right} gives
\begin{equation*}
v^{b_{\Lambda}\prime\prime}_{\Lambda}(b_{\Lambda}+) =  (1+\delta \mathbb W^{(q)}(0)) (g_{\Lambda}(b_{\Lambda})W^{(q)\prime\prime}(b_{\Lambda})-q\Lambda W^{(q)\prime}(b_{\Lambda}))\leq 0.
\end{equation*}
In combination with \eqref{concavity}, it follows that $v^{b_{\Lambda}\prime}_{\Lambda}$ is non-increasing on $(b_{\Lambda},\infty)$. On the other hand, we note using \eqref{v9.0} and \eqref{dvf_right}, that
\begin{align*}
v^{b_{\Lambda}\prime}_{\Lambda}(b_{\Lambda})=g_{\Lambda}(b_{\Lambda})W^{(q)\prime}(b_\Lambda)-q\Lambda W^{(q)}(b_\Lambda)=1.
\end{align*}
Hence, we deduce that $v^{b_{\Lambda}\prime}_{\Lambda}(x)\leq 1$ for $x>b_{\Lambda}$.\\
(3) Finally, using \eqref{value_func_simplified}, and the fact that $g_{\Lambda}(b_{\Lambda})\geq0$,
$$v^{b_{\Lambda}}_{\Lambda}(0)=g_{\Lambda}(b_{\Lambda})W^{(q)}(0)-\Lambda\geq -\Lambda,$$
as required.\\
(ii) Now, consider the case $b_{\Lambda}=0$. By taking a derivative in \eqref{v_0_new}, and using \eqref{xi_0_new} and \eqref{W_mathbb_completely_monotone}, we get
\begin{align}\label{excu}
v_{\Lambda}^{0\,\prime \prime}(x)&=(\delta+q\Lambda)\biggr(\frac{\mathbb{W}^{(q) \prime}(x)}{\varphi(q)}- \mathbb{W}^{(q)}(x)\biggl)' \notag\\
&=(\delta+q\Lambda)\Big(-\frac {\hat{f}'(x)} {\varphi(q)} + \hat{f}(x) \Big)' = (\delta+q\Lambda) \Big( -\frac {\hat{f}''(x)}{\varphi(q)} + \hat{f}'(x) \Big),
\end{align}

which is negative because $\hat{f}$ is completely monotone. Therefore $v_{\Lambda}^{0\,\prime}(x)$ is non-increasing, and hence it is enough to verify that $v_{\Lambda}^{0\,\prime}(0+)\leq1$ or equivalently
\begin{equation*}
\frac{(\delta+q\Lambda)\mathbb{W}^{(q)\prime}(0+)}{1+(\delta+q\Lambda)\mathbb{W}^{(q)}(0)}\leq\varphi(q).
\end{equation*}
This inequality is automatically satisfied in cases (i) and (ii) given in Proposition \ref{maxb.1} (2). Therefore, we have \eqref{equiv_inequality2} when $b_{\Lambda}=0$. To finish the proof, using \eqref{xi_0_new}, \eqref{v_0_new} and Assumption \ref{assump_SN_param}, we have
$$v_{\Lambda}^{0}(0)=\frac{(\delta+q\Lambda)}{\varphi(q)}\mathbb{W}^{(q)}(0)-\Lambda\geq-\Lambda.$$				
\end{proof}

\section{Solution of the constrained de Finetti's problem}\label{const.1}

In this section, we study the constrained de Finetti's problem given in \eqref{F.1}  under Assumptions  \ref{assump_completely_monotone} and  \ref{assumpdelta}. In order to solve this problem, we use the results in Section \ref{optimal_dividend_tv_sn}, noting that the optimal strategy for \eqref{p_Lambda} for any $K\in[0,1]$ is the same as the case $K=0$, i.e., 
 $D^{b_{\Lambda}}$, with $b_{\Lambda}$ as in  \eqref{b1}, is the optimal strategy for \eqref{p_Lambda}.  See the discussion at the end of Section \ref{subsection_constrained_de_finetti}. 
 
Throughout this section, we assume the following (see Remark \ref{rem.1} for the case it does not hold).
 \begin{assump} \label{assump_avoid_all_zero}
 We assume that $\bar{\Lambda} < \infty$, which, by Remark \ref{remark_b_zero_all}, is equivalent  to
 \begin{equation}\label{as.1}
\varphi(q)< \phi_1(\infty) = \dfrac{q+\Pi(0,\infty)}{c-\delta},\ \text{if}\  \sigma=0 \ \text{and}\ \Pi(0,\infty)<\infty.
\end{equation}
 \end{assump}
  
First we need to study the relationship between $\Lambda$ and its corresponding optimal barrier level $b_{\Lambda}$, which will give us enough tools to see if the problem \eqref{F.1} is feasible or not.



Recall Remark \ref{remark_b_zero_all} and fix $\Lambda > \bar{\Lambda}$ (then necessarily $b_\Lambda > 0$).
Since $\xi'_{\Lambda}(b_{\Lambda})=0$ and by the first equality of \eqref{v9}, we observe that $\Lambda$ and $b_\Lambda$ satisfy the relation $\Lambda = \lambda(b_\Lambda)$ where 
\begin{align}\label{L1}
\lambda(b) := ( q  H(b) )^{-1},  
\end{align}
with
\begin{align}\label{rem.4}
H(b) := \frac {[h(b)]^{2}} {h'(b)}- \bigg( W^{(q)}(b)+ \frac {h(b)} {\varphi(q)} \bigg).
\end{align}

\begin{lemma} \label{lemma_H_positive}
The function $H(b)$ given in \eqref{rem.4} is positive for $b\in(b_0,\infty)$, where we recall that $b_{0}$ is as in \eqref{b1} when $\Lambda=0$. 
\end{lemma}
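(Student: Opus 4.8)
The plan is to analyze $H(b)$ directly via the structural properties of $h$ established in Remark \ref{remfunh}. Recall from \eqref{funh} that $h'(b) = \varphi(q)(h(b) - W^{(q)\prime}(b))$, so that
\begin{equation*}
\frac{[h(b)]^2}{h'(b)} = \frac{[h(b)]^2}{\varphi(q)(h(b) - W^{(q)\prime}(b))},
\end{equation*}
and substituting this into \eqref{rem.4} gives
\begin{equation*}
H(b) = \frac{[h(b)]^2}{\varphi(q)(h(b) - W^{(q)\prime}(b))} - W^{(q)}(b) - \frac{h(b)}{\varphi(q)}.
\end{equation*}
First I would put this over the common denominator $\varphi(q)(h(b) - W^{(q)\prime}(b))$. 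The key sign issue is the denominator: for $b > b_0$ we have $h'(b) > 0$ by Remark \ref{remfunh} (i), which forces $h(b) - W^{(q)\prime}(b) > 0$, so the denominator is strictly positive on $(b_0, \infty)$. Hence the sign of $H(b)$ is governed entirely by its numerator.

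The next step is to simplify the numerator. Combining the terms over the common denominator, the numerator becomes
\begin{equation*}
[h(b)]^2 - \bigl(W^{(q)}(b) \varphi(q) + h(b)\bigr)\bigl(h(b) - W^{(q)\prime}(b)\bigr).
\end{equation*}
Expanding and cancelling the $[h(b)]^2$ term, this reduces to
\begin{equation*}
h(b) W^{(q)\prime}(b) - \varphi(q) W^{(q)}(b) h(b) + \varphi(q) W^{(q)}(b) W^{(q)\prime}(b),
\end{equation*}
which I would factor as $W^{(q)\prime}(b)\bigl(h(b) + \varphi(q) W^{(q)}(b)\bigr) - \varphi(q) W^{(q)}(b) h(b)$. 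At this point I expect to use the integration-by-parts identity \eqref{int_by_parts_laplace}, namely $\varphi(q) e^{\varphi(q)b}\int_b^\infty e^{-\varphi(q)y} W^{(q)}(y)\,\mathrm{d}y = W^{(q)}(b) + h(b)/\varphi(q)$, to rewrite the numerator in terms of the two Laplace-type integrals of $W^{(q)}$ and $W^{(q)\prime}$ appearing in \eqref{funh}. Writing $A(b) := \varphi(q)e^{\varphi(q)b}\int_b^\infty e^{-\varphi(q)y}W^{(q)}(y)\,\mathrm{d}y$ so that $h(b) = \varphi(q)(A(b) - W^{(q)}(b))$ by \eqref{int_by_parts_laplace}, I would express everything through $A(b)$ and $W^{(q)}(b)$ and show the resulting expression is positive.

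The main obstacle will be establishing positivity of the numerator, which amounts to a strict inequality between $W^{(q)\prime}(b)$, $W^{(q)}(b)$, and the tail integral $A(b)$. I anticipate that the decisive input is the strict log-convexity of $W^{(q)\prime}$ (the analogue of Lemma \ref{compmon} for $W^{(q)}$ noted in Remark \ref{com_mon_forW}) under Assumption \ref{assump_completely_monotone}; log-convexity yields that $W^{(q)\prime}(y)/W^{(q)\prime}(b)$ grows at a controlled exponential rate relative to $\varphi(q)$, giving the needed comparison between the integral of $W^{(q)\prime}$ and that of $W^{(q)}$ in the tail. Concretely, I would aim to show the numerator is a positive combination of terms each controlled by this log-convexity/monotonicity, so that strict positivity on $(b_0, \infty)$ follows. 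An alternative, possibly cleaner route is to recognize $H$ as essentially $1/(q\lambda(b))$ and instead prove the equivalent statement that $\lambda(b) > 0$ on $(b_0,\infty)$ by appealing directly to the representation $\Lambda = \lambda(b_\Lambda)$ together with the fact, from Proposition \ref{maxb.1} and Remark \ref{remark_b_zero_all}, that the map $\Lambda \mapsto b_\Lambda$ sends the admissible range $\Lambda > \bar\Lambda \geq -\delta/q$ onto thresholds exceeding $b_0$; I would keep this as a fallback if the direct algebraic positivity proves unwieldy.
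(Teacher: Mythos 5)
Your reduction is exactly the paper's: clearing the denominator $h'(b)=\varphi(q)\bigl(h(b)-W^{(q)\prime}(b)\bigr)>0$ on $(b_0,\infty)$ and cancelling $[h(b)]^2$ leaves precisely the numerator $W^{(q)\prime}(b)\bigl(h(b)+\varphi(q)W^{(q)}(b)\bigr)-\varphi(q)W^{(q)}(b)h(b)$, which is the paper's identity \eqref{L5}, so the algebra and the sign analysis of the denominator are correct and identical in substance to the published argument. The gap is in the decisive positivity step, which you leave as an anticipation rather than an argument, and the tool you name is not the one that delivers it. The paper proves the equivalent inequality \eqref{l.5}, namely $\frac{W^{(q)\prime}(b)}{W^{(q)}(b)}\bigl(W^{(q)}(b)+\frac{h(b)}{\varphi(q)}\bigr)>h(b)$, by invoking the \emph{log-concavity of $W^{(q)}$} (from \cite{HJM17}, valid under Assumption \ref{assump_completely_monotone}): since $W^{(q)\prime}/W^{(q)}$ is non-increasing and eventually strictly decreasing, one gets
\begin{equation*}
\frac{W^{(q)\prime}(b)}{W^{(q)}(b)}\int_b^\infty e^{-\varphi(q)y}W^{(q)}(y)\,\diff y>\int_b^\infty e^{-\varphi(q)y}W^{(q)\prime}(y)\,\diff y,
\end{equation*}
and \eqref{funh} together with \eqref{int_by_parts_laplace} converts this into \eqref{l.5}. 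Log-convexity of $W^{(q)\prime}$, your proposed input, controls $W^{(q)\prime\prime}/W^{(q)\prime}$, not $W^{(q)\prime}/W^{(q)}$; in the paper it is the key to the monotonicity of $\lambda$ in Proposition \ref{Lam.1}(i), but by itself it does not produce the comparison of the two tail integrals that your numerator requires.

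Your route can be completed, but only with an extra ingredient you did not identify: strict log-convexity of $W^{(q)\prime}$ together with $W^{(q)\prime\prime}(y)/W^{(q)\prime}(y)\uparrow\Phi(q)$ gives $W^{(q)\prime}(b+u)<W^{(q)\prime}(b)e^{\Phi(q)u}$ and hence $h(b)<\frac{\varphi(q)}{\varphi(q)-\Phi(q)}\,W^{(q)\prime}(b)$, and combining this with $W^{(q)\prime}(b)\ge\Phi(q)W^{(q)}(b)$ (a consequence of the monotone convergence in \eqref{W_q_limit}) yields the required strict inequality; note the relevant growth rate is $\Phi(q)$, not $\varphi(q)$ as you wrote. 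Finally, your fallback via the positivity of $\lambda$ is circular as stated: the assertion that $\Lambda\mapsto b_\Lambda$ maps the admissible multipliers onto $(b_0,\infty)$, and that the corresponding $\Lambda$'s are nonnegative, is exactly the content of Proposition \ref{Lam.1}, whose positivity statement the paper derives \emph{from} this lemma. One could in principle reorder the logic through Proposition \ref{Lam.1}(i)--(iii), but that is substantially heavier than the direct inequality above.
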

\begin{proof}
First we note that, using \eqref{funh},
	\begin{align}
		h'(b)=\varphi(q)(h(b)-W^{(q)\prime}(b)). \label{h_diff}
	\end{align}
Hence, 
\begin{equation}\label{L5}
h'(b)H(b)=\varphi(q)W^{(q)}(b)\bigg(\dfrac{W^{(q)\prime}(b)}{W^{(q)}(b)}\bigg(W^{(q)}(b)+\dfrac{h(b)}{\varphi(q)}\bigg)-h(b)\bigg).
\end{equation}
Now since $h'>0$ on $(b_{0},\infty)$ (see Remark \ref{remfunh}(i)),
it is enough to show that the right side of \eqref{L5} is positive. On the other hand, we know from \cite{HJM17} that $W^{(q)}$ is log-concave on $(0,a_0]$ and strictly log-concave on $(a_0,\infty)$, where $a_0$ is defined in Remark \ref{remark_about_g} for the case $\Lambda = 0$. Then,
\begin{equation*}
	\dfrac{W^{(q)\prime}(\eta)}{W^{(q)}(\eta)}\geq\dfrac{W^{(q)\prime}(\varsigma)}{W^{(q)}(\varsigma)},\ \text{for any $\eta$ and $\varsigma$ with $0 <  \eta\leq\varsigma$.}
\end{equation*}
Note that the previous inequality is strict when $a_0 <\eta<\varsigma$. From here, it can be verified that
\begin{equation*}
	\dfrac{W^{(q)\prime}(b)}{W^{(q)}(b)}\int_{b}^{\infty}e^{-\varphi(q)y} W^{(q)}(y)\diff y> \int_{b}^{\infty}e^{-\varphi(q)y} W^{(q)\prime}(y) \diff y,
\end{equation*}
and using \eqref{funh} and \eqref{int_by_parts_laplace}, it follows that 
\begin{equation}\label{l.5}
\dfrac{W^{(q)\prime}(b)}{W^{(q)}(b)}\bigg(W^{(q)}(b)+\dfrac{h(b)}{\varphi(q)}\bigg)>h(b),\ \text{for all $b>0$}.
\end{equation}
From  \eqref{L5} and \eqref{l.5}, we have  $h'(b)H(b)>0$ for $b \in (b_0,\infty)$, as desired.
\end{proof}
Note that Lemma \ref{lemma_H_positive} implies that $\lambda(b)$ is finite and positive for $b\in(b_0,\infty)$.

\begin{proposition}\label{Lam.1}
Assume that \eqref{as.1} holds. Then, the function $\lambda(b)$, given in \eqref{L1}, is 
	(i)  strictly increasing for all $b>b_0$, 
	 (ii)  $\displaystyle\lim_{b\downarrow b_{0}}\lambda(b)=\bar{\Lambda} \vee 0$, (iii)  $\displaystyle\lim_{b\rightarrow \infty}\lambda(b)=\infty$ and (iv)  $b_{\lambda(b)}=b$ for all $b>b_{0}$.  
\end{proposition}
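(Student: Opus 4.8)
The plan is to reduce all four claims to a single sign identity for $\xi_\Lambda'$ and then exploit strict monotonicity of $H$. First I would record the identity that drives everything. Starting from the first equality in \eqref{v9}, $\xi_\Lambda'(b)=q\Lambda-\tfrac{h'(b)}{h(b)}\xi_\Lambda(b)$, and substituting the closed form \eqref{v5} for $\xi_\Lambda(b)$, a short manipulation yields
\[
\xi_\Lambda'(b)=\frac{h'(b)}{[h(b)]^2}\bigl(q\Lambda H(b)-1\bigr),\qquad b\neq b_0,
\]
with $H$ as in \eqref{rem.4}. On $(b_0,\infty)$ we have $h'(b)>0$ (Remark \ref{remfunh}(i)) and $H(b)>0$ (Lemma \ref{lemma_H_positive}), so $\lambda(b)=1/(qH(b))$ is finite and positive and $\mathrm{sign}\,\xi_\Lambda'(b)=\mathrm{sign}(\Lambda-\lambda(b))$ there; in particular $\xi_\Lambda'(b)=0\iff\Lambda=\lambda(b)$, which is precisely the relation used to define $\lambda$.

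For (i) I would differentiate $H$. Using $h'=\varphi(q)(h-W^{(q)\prime})$ from \eqref{h_diff}, the terms $-W^{(q)}-h/\varphi(q)$ contribute derivative $-h$, while $\tfrac{d}{db}\tfrac{h^2}{h'}=2h-\tfrac{h^2h''}{(h')^2}$, so that
\[
H'(b)=h(b)\Bigl(1-\frac{h(b)h''(b)}{[h'(b)]^2}\Bigr).
\]
Hence $H'<0$ on $(b_0,\infty)$ exactly when $h$ is strictly log-convex there. To establish this, I would use the representation $h(b)=\varphi(q)\int_0^\infty e^{-\varphi(q)u}W^{(q)\prime}(b+u)\,\diff u$, obtained from \eqref{funh} by the substitution $u=y-b$: each translate $b\mapsto W^{(q)\prime}(b+u)$ is strictly log-convex by Lemma \ref{compmon} (cf.\ Remark \ref{com_mon_forW}), and a positive integral of log-convex functions is log-convex (Hölder), so $h$ is strictly log-convex and $hh''-(h')^2>0$. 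Thus $H$ is strictly decreasing, and since $H>0$, $\lambda=1/(qH)$ is strictly increasing on $(b_0,\infty)$, proving (i).

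For (ii) I would split on the sign of $b_0$. If $b_0>0$, then $\Lambda=0$ gives $b_\Lambda=b_0>0$, so by Remark \ref{remark_b_zero_all} necessarily $\bar\Lambda<0$ and $\bar\Lambda\vee0=0$; as $b\downarrow b_0$ we have $h'(b)\downarrow h'(b_0)=0^+$ while $h(b_0)>0$ and the remaining terms of $H$ stay finite, whence $H(b)\to\infty$ and $\lambda(b)\to0$. If $b_0=0$, then $\bar\Lambda\geq0$ and $\bar\Lambda\vee0=\bar\Lambda$; here I would let $b\downarrow0$ in the equivalence $\xi_\Lambda'(b)=0\iff\Lambda=\lambda(b)$ and match the limit to the defining property of $\bar\Lambda$, namely that $\bar\Lambda$ is the value at which the $b_\Lambda=0$ criterion of Proposition \ref{maxb.1}(2) becomes an equality, i.e.\ $\xi_{\bar\Lambda}'(0+)=0$; evaluating $\lambda(0+)=1/(qH(0+))$ with the boundary data of Lemma \ref{lemma_zero_case}, \eqref{eq:Wqp0} and \eqref{W_zero_derivative} gives $\lambda(0+)=\bar\Lambda$.

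With (i) and (ii) in hand I would obtain (iv) and (iii) together. Fix $b>b_0$ and set $\Lambda:=\lambda(b)$; since $\lambda$ is increasing with $\lambda(b_0+)=\bar\Lambda\vee0\geq0$ by (ii), we have $\Lambda>0$. By Remark \ref{remark_b_zero_all} and Proposition \ref{maxb.1}, the barrier $b_\Lambda$ is finite, strictly positive, and satisfies $\xi_\Lambda'(b_\Lambda)=0$, hence (via \eqref{v9}) $\xi_\Lambda(b_\Lambda)=g_\Lambda(b_\Lambda)>0$ as $\Lambda>0$. The first equality in \eqref{v9} then gives $h'(b_\Lambda)=q\Lambda\,h(b_\Lambda)/\xi_\Lambda(b_\Lambda)>0$, so $b_\Lambda>b_0$; the equivalence yields $\lambda(b_\Lambda)=\Lambda=\lambda(b)$, and injectivity from (i) forces $b_{\lambda(b)}=b_\Lambda=b$, which is (iv). Consequently every $\Lambda>\bar\Lambda\vee0$ is attained by $\lambda$ on $(b_0,\infty)$, so $\lim_{b\to\infty}\lambda(b)=\sup_{(b_0,\infty)}\lambda=\infty$, giving (iii). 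The main obstacle is the boundary identification in case $b_0=0$ of (ii): it is the one place where the clean sign calculus is unavailable and one must verify by explicit computation that the limiting multiplier coincides with $\bar\Lambda$; the monotonicity input (i), resting on the log-convexity of $h$, is the other essential—and least routine—ingredient.
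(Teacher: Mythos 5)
Your proof is correct, and while claims (i) and (ii) follow essentially the paper's route, your treatment of (iii) and (iv) is genuinely different and more economical. For (i), your identity $H'(b)=h(b)\bigl(1-h(b)h''(b)/[h'(b)]^2\bigr)$ agrees with the paper's computation, and your observation that $h(b)=\varphi(q)\int_0^\infty e^{-\varphi(q)u}W^{(q)\prime}(b+u)\,\diff u$ is a mixture of strictly log-convex translates (hence strictly log-convex by H\"older) is a cleaner packaging of the paper's pointwise comparison of $W^{(q)\prime\prime}/W^{(q)\prime}$ followed by integration; both reduce to $h(b)h''(b)>[h'(b)]^2$ on $(b_0,\infty)$. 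For (ii) with $b_0>0$ you reproduce the paper's argument; for $b_0=0$ you correctly identify the needed boundary computation ($H(0+)=1/(q\bar{\Lambda})$ via \eqref{eq:Wqp0}, \eqref{W_zero_derivative} and the relation $\phi_1(\bar{\Lambda})=\varphi(q)$ or $\phi_2(\bar{\Lambda})=\varphi(q)$, with the right-hand side read as $\infty$ when $\bar{\Lambda}=0$) but leave it unexecuted --- this is exactly the computation the paper carries out, so it is a deferral rather than a gap. The real divergence is in (iii)--(iv). The paper proves (iv) by showing directly that $\diff\xi_{\lambda(b)}(\varsigma)/\diff\varsigma>0$ on $(0,b)$ (treating $(0,b_0]$ and $(b_0,b)$ separately via \eqref{eq.2}) and that it vanishes at $\varsigma=b$; you instead argue that for $\Lambda=\lambda(b)>\bar{\Lambda}\vee0$ the barrier $b_\Lambda$ of \eqref{b1} is positive and finite (Remark \ref{remark_b_zero_all}, Proposition \ref{maxb.1}), satisfies $\xi'_\Lambda(b_\Lambda)=0$ by continuity, is forced into $(b_0,\infty)$ by $h'(b_\Lambda)=q\Lambda h(b_\Lambda)/\xi_\Lambda(b_\Lambda)>0$ together with Remark \ref{remfunh}(i), and therefore obeys $\lambda(b_\Lambda)=\Lambda$, whence $b_\Lambda=b$ by the injectivity from (i); this sidesteps the paper's case analysis on $(0,b_0]$. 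The paper proves (iii) by an explicit asymptotic expansion of $H$ using the decomposition $W^{(q)}(b)=\Phi'(q)e^{\Phi(q)b}-f(b)$ of Remark \ref{com_mon_forW}; you obtain it essentially for free, since the same argument shows that every $\Lambda>\bar{\Lambda}\vee0$ lies in the range of the increasing function $\lambda$ on $(b_0,\infty)$, forcing $\lim_{b\to\infty}\lambda(b)=\infty$. Both shortcuts rest only on results (Proposition \ref{maxb.1}, Remark \ref{remark_b_zero_all}, Lemma \ref{lemma_H_positive}) established before this proposition, so no circularity is introduced.
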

\begin{proof}
(i) Taking the first derivative in \eqref{rem.4} and by  \eqref{h_diff}, we have that
\begin{align*}
H^{\prime}(b)
&= h(b)-\dfrac{[h(b)]^{2}h''(b)}{[h'(b)]^{2}}\notag\\
&=\dfrac{h(b)}{[h'(b)]^{2}}([h'(b)]^{2}-h(b)h''(b))\notag\\
&=\dfrac{[\varphi(q)h(b)]^{2}W^{(q)\prime}(b)}{[h'(b)]^{2}}\biggr(\dfrac{W^{(q)\prime}(b)}{h(b)}+\dfrac{W^{(q) \prime \prime}(b)}{\varphi(q)W^{(q)\prime}(b)}-1\biggr).
\end{align*}
where the last equality holds because
\begin{align*}
[h'(b)]^{2}-h(b)h''(b) &= \varphi^2(q)(h(b)-W^{(q)\prime}(b))^2 - h(b) \varphi(q)[\varphi(q)(h(b)-W^{(q)\prime}(b))-W^{(q)\prime \prime}(b)] \\
&=  \varphi(q) \Big\{ \varphi(q) ( W^{(q)\prime}(b)^2 - h(b) W^{(q)\prime}(b) ) + h(b) W^{(q)\prime \prime}(b) \Big\}.
\end{align*}
Since by Remark \ref{com_mon_forW} $W^{(q)\prime}$ is a strictly log-convex function, we have that 
\begin{equation*}
\dfrac{W^{(q)\prime\prime}(\eta)}{W^{(q)\prime}(\eta)}<\dfrac{W^{(q)\prime\prime}(\varsigma)}{W^{(q)\prime}(\varsigma)},\ \text{for any $\eta$ and $\varsigma$ with $0 < \eta<\varsigma$.}
\end{equation*}
From the above and integration by parts we can show that
\begin{align*}
\frac{W^{(q)\prime\prime}(b)}{W^{(q)\prime}(b)}h(b)&<\varphi(q)e^{\varphi(q)b}\int_b^\infty e^{-\varphi(q) y} W^{(q)\prime\prime} (y) \diff  y=-\varphi(q)W^{(q)\prime}(b)+\varphi(q)h(b),
\end{align*}
and hence
\begin{align*}
\dfrac{W^{(q)\prime}(b)}{h(b)}+\dfrac{W^{(q)''}(b)}{\varphi(q)W^{(q)\prime}(b)}-1<\dfrac{W^{(q)\prime}(b)}{h(b)}+\dfrac{h(b)-W^{(q)\prime}(b)}{h(b)}-1=0.
\end{align*}
Hence, we conclude that the function $H$ as in \eqref{rem.4} is strictly decreasing or equivalently $\lambda$ is strictly increasing.\\
(ii) For the case $\bar{\Lambda} \geq 0$, Remark \ref{remark_b_zero_all} gives
$b_{0}=0$. Then, from \eqref{xi_0_new}, \eqref{rem.4} and \eqref{h_diff}
\begin{align}
\lim_{b\rightarrow0}H(b)&=\frac{[h(0)]^2}{h'(0+)}-\left(W^{(q)}(0)+\frac{h(0)}{\varphi(q)}\right)\notag\\
&=\frac{h(0)W^{(q)\prime}(0+)-\varphi(q)W^{(q)}(0)(h(0)-W^{(q)\prime}(0+))}{\varphi(q)(h(0)-W^{(q)\prime}(0+))}\notag\\
&=\dfrac{\delta^{-1}W^{(q)\prime}(0+)-\varphi(q)W^{(q)}(0)(\delta^{-1}-W^{(q)}(0))}{\varphi(q)(\delta^{-1}-W^{(q)}(0))-W^{(q)\prime}(0+)}.
\end{align}
Using \eqref{eq:Wqp0}, \eqref{W_zero_derivative} and the fact that $\phi_1(\bar{\Lambda})=\varphi(q)$ or   $\phi_2(\bar{\Lambda})=\varphi(q)$ (see Remark \ref{remark_b_zero_all}), it can be verified that $\displaystyle\lim_{b\rightarrow0}H(b)=1/q\bar{\Lambda}$ (where in the case $\bar{\Lambda}=0$ the right-hand side is understood to be infinity), and hence  $\displaystyle\lim_{b\downarrow b_{0}} \lambda(b) = \lim_{b\downarrow 0} \lambda(b)=\bar{\Lambda}$. 

For the case $\bar{\Lambda} < 0$, Remark \ref{remark_b_zero_all} gives $b_{0}>0$. By Lemma 3 of \cite{KLP}, we know that $h$ attains its unique minimum at $b_{0}$ and by the continuity of $h'$, $\displaystyle\lim_{b\rightarrow b_{0}}h'(b)=h'(b_{0})=0$. In addition, by \eqref{h_diff}, $\displaystyle\lim_{b\rightarrow b_{0}}h(b) = W^{(q)\prime} (b_0) > 0$. Therefore, from \eqref{rem.4}, we get that $\displaystyle\lim_{b\rightarrow b_{0}}H(b)=\infty$, and hence $\displaystyle\lim_{b\downarrow b_{0}}\lambda(b)=0 =\bar{\Lambda} \vee 0$.\\
(iii) From Remark \ref{com_mon_forW}, we can write
\begin{align*}
h(b) = \varphi(q) \bigg(\frac { \Phi(q) \Phi'(q) e^{\Phi(q) b}} {\varphi(q) - \Phi(q)} - \tilde{f}(b) \bigg),
\end{align*}
where $\tilde{f}(b) := \displaystyle\int_{0}^{\infty}e^{-\varphi(q)y}f'(y+b)\diff y$, and hence
we get the following expressions:
\begin{align*}
[h(b)]^{2} &= [\varphi(q)]^{2}\bigg(\dfrac{[\Phi(q)\Phi'(q)]^{2}}{(\varphi(q)-\Phi(q))^{2}}e^{2\Phi(q)b}-\frac{2\Phi(q)\Phi'(q)}{\varphi(q)-\Phi(q)}e^{\Phi(q)b}\tilde{f}(b)+[\tilde{f}(b)]^{2}\bigg),\\
h'(b) &=\varphi(q)\biggl(\dfrac{[\Phi(q)]^{2}\Phi'(q)}{\varphi(q)-\Phi(q)}e^{\Phi(q)b}-\varphi(q)\tilde{f}(b)+f'(b)\biggr),
\end{align*}
and
\begin{equation*}
W^{(q)}(b)+\frac{h(b)}{\varphi(q)} =\dfrac{\Phi'(q)\varphi(q)}{\varphi(q)-\Phi(q)}e^{\Phi(q)b}-\tilde{f}(b)-f(b).
\end{equation*}
  Applying these identities in \eqref{rem.4}, it follows that 
\begin{align*}
H(b)&=\dfrac{\varphi(q)\Phi(q)\Phi'(q)}{\varphi(q)-\Phi(q)} H_1(b) +\dfrac{\varphi(q)^2[\tilde{f}(b)]^{2}}{h'(b)}+\tilde{f}(b)+f(b),
\end{align*}
where
\begin{align*}
H_1(b) &:=e^{\Phi(q)b}  \left[\frac {\varphi(q)} {h'(b)} \left( \dfrac{ \Phi(q)\Phi'(q)}{\varphi(q)-\Phi(q)}e^{\Phi(q)b}-2\tilde{f}(b) \right) -\dfrac{1}{\Phi(q)}\right] \\
&= \frac {\varphi(q)} {\Phi(q)}
\dfrac{\varphi(q)\tilde{f}(b)-2\Phi(q)\tilde{f}(b)-f'(b)}{h'(b)}.
\end{align*}
By the dominated convergence theorem we have that $f(b)\to 0$, $\tilde{f}(b)\to 0$, and $f'(b)\to 0$ as $b\to\infty$. Hence $\displaystyle\lim_{b\rightarrow \infty}H(b)=0$ or equivalently
$\displaystyle\lim_{b\rightarrow \infty}\lambda(b)=\infty$.\\
(iv) Fix $b > b_0$. First let us assume that $b_0>0$ or that $b_0=0$ and $h'(0+)=0$. Then using \eqref{v9} for $\Lambda = \lambda(b)$, we have
\begin{align}
\dfrac{\diff\xi_{\lambda(b)}(\varsigma)}{\diff\varsigma} & = q \lambda (b)-\dfrac{h'(\varsigma)}{h(\varsigma)}\xi_{\lambda(b)}(\varsigma),\qquad\text{for $\varsigma > 0$}. \label{xi_derivative_b}
\end{align}
For $\varsigma\in (0,b_0]$, by the fact that $h'(\varsigma)\leq0$ for $\varsigma\in[0,b_0]$ (see Remark \ref{remfunh}(i)) we have that $\dfrac{\diff\xi_{\lambda(b)}(\varsigma)}{\diff\varsigma} > 0$. On the other hand, applying \eqref{h_diff} and  \eqref{v5} in \eqref{xi_derivative_b},
\begin{align}\label{eq.2}
\dfrac{\diff\xi_{\lambda(b)}(\varsigma)}{\diff\varsigma}&=\dfrac{1}{[h(\varsigma)]^{2}}\biggr(q\lambda(b)\biggr([h(\varsigma)]^{2}-h'(\varsigma)\biggr(W^{(q)}(\varsigma)+\dfrac{h(\varsigma)}{\varphi(q)}\biggl)\biggr)-h'(\varsigma)\biggl)\notag\\
&=\dfrac{h'(\varsigma)}{[h(\varsigma)]^{2}}\biggr(\frac{\lambda(b)}{\lambda(\varsigma)}-1\biggl).
\end{align}
Then using the fact that $\varsigma\mapsto\lambda(\varsigma)$ is strictly increasing as in (i), and that $h'(\varsigma)>0$ for $\varsigma>b_0$ (see Remark \ref{remfunh}(i)) we have that  $\dfrac{\diff\xi_{\lambda(b)}(\varsigma)}{\diff\varsigma}>0$ for $b_0<\varsigma<b$ and vanishes at $\varsigma=b$. Now let us assume that $b_0=0$ and that $h'(0+)>0$. Then, by the proof of Lemma 3 in \cite{KLP}, we have that $h'(\zeta)>0$ for all $\zeta >0$. Hence \eqref{eq.2} implies that $\dfrac{\diff\xi_{\lambda(b)}(\varsigma)}{\diff\varsigma}>0$ for $0<\varsigma<b$  and vanishes at $\varsigma=b$. The above implies that $b=\inf\left\{\varsigma\geq0:\dfrac{\diff\xi_{\lambda(b)}(\varsigma)}{\diff\varsigma}\leq0\right\}$.
Therefore by \eqref{b1} we obtain that $b_{\lambda(b)}=b$ for all $b>b_{0}$.
\end{proof}

Now, by \eqref{lap_ac} and \eqref{int_by_parts_laplace}, we see that
\begin{align}\label{lap_ac.1}
\begin{split}
\Psi_{x}(b)&=Z^{(q)}(x)+\delta q \int_b^x\mathbb{W}^{(q)}(x-y)W^{(q)}(y)\diff y \\
&\quad-\frac{q(W^{(q)}(b)+h(b)/\varphi(q))}{h(b)}\left(W^{(q)}(x)+\delta\int_b^x\mathbb{W}^{(q)}(x-y)W^{(q)\prime}(y)\diff y\right),
\end{split}
\end{align}
for all $b\in[0,\infty)$. 
\begin{remark} \label{remark_x_zero} 
Note that if $x=0$ and $X$ is of unbounded variation, we immediately obtain that $\Psi_{0}(b)=1$, for all $b\geq0$. Hence we obtain that $V(0; K)=0$ if $K=1$, and the problem is infeasible otherwise.
\end{remark}
The proof of the following result is deferred to Appendix \ref{appendix_decPsc1}.
\begin{lemma}\label{decPsi1}
Assume $x\geq0$ and in the case $X$ is of unbounded variation that $x>0$.  Then the function $b \mapsto \Psi_{x}(b)$ defined in \eqref{lap_ac.1} is strictly decreasing on $[0,\infty)$ and 
\begin{equation}\label{L.14}
K_{x}:=\lim_{b\rightarrow\infty}\Psi_{x}(b)=Z^{(q)}(x)-\dfrac{q}{\Phi(q)}W^{(q)}(x).
\end{equation} 
\end{lemma}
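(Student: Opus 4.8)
The plan is to establish the two claims of Lemma \ref{decPsi1} separately, both by reducing to monotonicity properties of the auxiliary function $h$ and the ratios already studied in Lemma \ref{lemma_H_positive} and Proposition \ref{Lam.1}. First I would focus on the limit \eqref{L.14}, since it is the more mechanical of the two. Letting $b \rightarrow \infty$ in the representation \eqref{lap_ac.1}, the two integral terms $\int_b^x (\cdots) \diff y$ vanish for any fixed $x$ (the domain of integration becomes empty once $b > x$), so only the first term $Z^{(q)}(x)$ and the boundary term survive. For the boundary term I would use that the coefficient $q(W^{(q)}(b) + h(b)/\varphi(q))/h(b)$ converges: by \eqref{v8} in the proof of the preceding lemma we already know $W^{(q)}(b)/h(b) \to (\varphi(q)-\Phi(q))/(\varphi(q)\Phi(q))$, and $h(b)/\varphi(q)$ divided by $h(b)$ is just $1/\varphi(q)$, so the whole coefficient tends to $q\bigl[(\varphi(q)-\Phi(q))/(\varphi(q)\Phi(q)) + 1/\varphi(q)\bigr] = q/\Phi(q)$, while the factor in parentheses tends to $W^{(q)}(x)$. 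This yields $K_x = Z^{(q)}(x) - (q/\Phi(q)) W^{(q)}(x)$ as stated.

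For the strict monotonicity, I would differentiate $\Psi_x(b)$ in $b$ and show the derivative is negative. The cleanest route is to differentiate \eqref{lap_ac.1} directly: using $\frac{\diff}{\diff b}\int_b^x \mathbb{W}^{(q)}(x-y) W^{(q)}(y)\diff y = -\mathbb{W}^{(q)}(x-b) W^{(q)}(b)$ (and the analogous identity with $W^{(q)\prime}$), most terms will organize into a single factor times the derivative of the coefficient $c(b) := (W^{(q)}(b) + h(b)/\varphi(q))/h(b)$. The key observation is that the $b$-dependence enters the surviving term through $c(b)$ multiplied by $\bigl(W^{(q)}(x) + \delta\int_b^x \mathbb{W}^{(q)}(x-y) W^{(q)\prime}(y)\diff y\bigr)$, which is a positive quantity for $x > 0$ (or $x = 0$ in the bounded variation case, using $W^{(q)}(0) > 0$). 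So the sign of $\Psi_x'(b)$ will be governed by $-c'(b)$, and I expect to show $c'(b) > 0$ for all $b \geq 0$. Computing $c'(b)$ and clearing the positive factor $h(b)^2$, using \eqref{h_diff} to substitute $h'(b) = \varphi(q)(h(b) - W^{(q)\prime}(b))$, the positivity of $c'(b)$ should reduce precisely to an inequality of the same type as \eqref{l.5}, namely $\frac{W^{(q)\prime}(b)}{W^{(q)}(b)}(W^{(q)}(b) + h(b)/\varphi(q)) > h(b)$, which was already proven via the log-concavity of $W^{(q)}$ on $(0,\infty)$.

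The main obstacle will be handling the cross terms in the differentiation carefully so that everything collapses to a clean multiple of $c'(b)$: naively differentiating \eqref{lap_ac.1} produces several terms (from the Leibniz rule on both integrals and from $c'(b)$), and I will need to verify that the pieces not proportional to $c'(b)$ cancel exactly. I anticipate that the term $\delta q \int_b^x \mathbb{W}^{(q)}(x-y)W^{(q)}(y)\diff y$ differentiates to $-\delta q\, \mathbb{W}^{(q)}(x-b) W^{(q)}(b)$, while the derivative of the product $c(b)\cdot\delta\int_b^x \mathbb{W}^{(q)}(x-y)W^{(q)\prime}(y)\diff y$ contributes, among other things, a matching term $-c(b)\,\delta\, \mathbb{W}^{(q)}(x-b) W^{(q)\prime}(b)$; ensuring these and the $Z^{(q)}$-derivative combine correctly is the delicate bookkeeping. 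An alternative, and perhaps safer, approach that avoids this bookkeeping is to prove monotonicity probabilistically: since $\Psi_x(b) = \mathbb{E}_x[e^{-q\tau_b}]$ is the Laplace transform of the ruin time of the refracted process $U^b$, and a larger threshold $b$ means dividends are paid less aggressively, the controlled process stays higher and ruin is delayed, so $\tau_b$ is stochastically increasing in $b$ and hence $\Psi_x(b)$ is decreasing; strictness would then follow from the strict effect of refraction on a set of positive probability. I would attempt the analytic computation first (since it gives strictness cleanly through \eqref{l.5}) and fall back on the probabilistic monotonicity argument if the term-by-term cancellation proves too cumbersome. Finally I would note that the boundary case $x = 0$ in the unbounded variation setting is excluded by hypothesis, consistent with Remark \ref{remark_x_zero} where $\Psi_0(b) \equiv 1$ is constant rather than strictly decreasing.
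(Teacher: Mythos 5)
Your treatment of the limit \eqref{L.14} and of the monotonicity on $[x,\infty)$ is sound and matches the paper: for $b\geq x$ the integrals vanish, $\Psi_x(b)=Z^{(q)}(x)-q\,c(b)W^{(q)}(x)$ with $c(b):=\bigl(W^{(q)}(b)+h(b)/\varphi(q)\bigr)/h(b)$, the limit of $c(b)$ is $1/\Phi(q)$ by \eqref{v8}, and $c'(b)>0$ reduces via \eqref{h_diff} exactly to \eqref{l.5}. The genuine gap is in the case $b<x$. The cancellation you anticipate does not occur: the two Leibniz boundary terms combine to $q\delta\,\mathbb{W}^{(q)}(x-b)\bigl[c(b)W^{(q)\prime}(b)-W^{(q)}(b)\bigr]$, and by the very inequality \eqref{l.5} you invoke this quantity is \emph{strictly positive}, i.e.\ it pushes in the wrong direction. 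The derivative therefore takes the form $\frac{q\,\alpha(b)}{[h(b)]^{2}}\,r(b;x)$ with $\alpha(b):=W^{(q)\prime}(b)h(b)-\varphi(q)W^{(q)}(b)(h(b)-W^{(q)\prime}(b))>0$ and
\begin{equation*}
r(b;x)=\dfrac{\delta\mathbb{W}^{(q)}(x-b)h(b)}{\varphi(q)}-W^{(q)}(x)-\delta\int_{b}^{x}\mathbb{W}^{(q)}(x-y)W^{(q)\prime}(y)\diff y,
\end{equation*}
and one must prove $r(b;x)<0$ on $(0,x)$, which is not a bookkeeping exercise: the paper does it by rewriting $r$ through the completely monotone representation of Lemma \ref{compmon} (so that the $e^{\varphi(q)x}$ parts cancel and only the $\hat f$-parts remain), showing $\partial r/\partial b<0$ using that $W^{(q)}$ is increasing, and checking $r(0+;x)=0$. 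This is the core of the proof and is entirely absent from your plan.

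Your probabilistic fallback is a legitimate alternative idea, but as written it is only an assertion. A pathwise comparison $U^{b_1}_t\geq U^{b_2}_t$ for $b_1>b_2$ (hence $\tau_{b_1}\geq\tau_{b_2}$ and weak monotonicity of $\Psi_x$) is plausible for the refracted SDE, but you would need to actually establish it (the drift is discontinuous, so a standard comparison theorem does not apply off the shelf), and strictness requires showing that with positive probability the two controlled paths separate before ruin \emph{and} that this strictly changes $\e_x[e^{-q\tau_b}]$ — for instance, when $x\leq b_2<b_1$ the processes coincide until they first exceed $b_2$, so you must argue this happens before ruin with positive probability. None of this is carried out. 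As it stands, the proposal proves the lemma only on $[x,\infty)$ and for the limit; the strict monotonicity on $(0,x)$ is not established.
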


By Remark \ref{remfunh}(ii), the limit of  \eqref{div_ac} becomes
\begin{align}\label{div_b_to_inf}
\lim_{b\to\infty}\mathbb{E}_x\left[\int_0^{\tau_{b}} e^{-qt}\diff D^b_t\right]=0, \quad x \geq 0.
\end{align}
	
We will denote, for $K \geq 0$,
		\begin{equation*}
		v_{\Lambda}^b(x;K):=v^b_{\Lambda}(x)+\Lambda K.
	\end{equation*}
Therefore, using \eqref{div_b_to_inf} and Lemma \ref{decPsi1} in \eqref{vf_1_new} we obtain that
\begin{align}\label{vf_1_limit}
\lim_{b\to\infty}v_{\Lambda}^b(x;K)=\Lambda(K- K_x), \qquad\text{for all $x>0$.}
\end{align}

We are now ready to characterize the solution of \eqref{F.1}. We define  the do-nothing strategy as  $D^{\infty}=0$ and hence $U_t^{D^\infty} = X$. By (8.9) of \cite{kyprianou2014}, \eqref{vf_1_limit} and Lemma \ref{decPsi1}, we confirm the following convergence results:
\begin{equation}\label{inf.1}
\E_{x}\Bigr[e^{-q\tau^{D^{\infty}}}\Bigr]=K_{x}=\lim_{b\to\infty}\Psi_x(b)
\quad \text{and}\quad v_{\Lambda}^{D^{\infty}}(x;K)=\Lambda(K-K_{x})=\lim_{b\to\infty}v_{\Lambda}^{D^{b}}(x;K),
\end{equation}
where $\tau^{D^{\infty}}:= \inf \{t>0:X_{t}<0\}$. 

From \eqref{p_dual} and \eqref{inf.1}, we observe  that, if $K \geq K_x$, then $D^\infty$ is feasible for the problem \eqref{F.1} and hence
\begin{equation}\label{inf.2}
V(x;K) =\sup_{D\in\Theta}\inf_{\Lambda\geq0}v_{\Lambda}^{D}(x; K)\geq\inf_{\Lambda\geq0}v_{\Lambda}^{D^{\infty}}(x;K)=0,\ \text{if}\ K\in[K_{x},1],
\end{equation}
with $v_{\Lambda}^{D}$ as in \eqref{lm}.

\begin{theorem}\label{main.1}
Let $x\geq0$ be fixed. Assume \eqref{as.1}  and one of the following cases: (1) $x>0$ and $X$ is of unbounded variation; (2) $x\geq0$ and $X$ is of bounded variation. Then,
\begin{equation*}
V(x;K)=\begin{cases}
v^{{b_{0}}}_{0}(x;K),&\text{if}\ K\in\left[\Psi_{x}(b_{0	}),1\right],\\
\displaystyle\inf_{\Lambda\geq0}V_{\Lambda}(x;K),&\text{if}\ K\in(K_{x},\Psi_{x}(b_{0})),\\
0,&\text{if}\ K=K_{x},\\
-\infty,&\text{if}\ K\in[0,K_{x}).
\end{cases}
\end{equation*} 	
\end{theorem}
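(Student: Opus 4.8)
The plan is to establish strong duality, $V(x;K)=\inf_{\Lambda\geq0}V_\Lambda(x;K)$, by a complementary-slackness argument, treating the four regimes of $K$ separately but through one common mechanism. For each feasible value of $K$ I will produce a multiplier $\Lambda^*\geq0$ and a feasible strategy $\hat D\in\Theta$ with $v^{\hat D}(x)=V_{\Lambda^*}(x;K)$, after which the weak-duality chain coming from \eqref{p_dual},
\[
V(x;K)\leq\inf_{\Lambda\geq0}V_\Lambda(x;K)\leq V_{\Lambda^*}(x;K)=v^{\hat D}(x)\leq V(x;K),
\]
forces equality throughout (the last inequality being feasibility of $\hat D$). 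The optimizer of $V_{\Lambda^*}(x;K)$ is the threshold strategy $D^{b_{\Lambda^*}}$ by Theorem \ref{L.V.1}, and the crucial middle equality will follow from the identity $v_{\Lambda^*}^{b_{\Lambda^*}}(x;K)=v^{D^{b_{\Lambda^*}}}(x)+\Lambda^*(K-\Psi_x(b_{\Lambda^*}))$ together with complementary slackness $\Lambda^*(K-\Psi_x(b_{\Lambda^*}))=0$.

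For the infeasible regime $K\in[0,K_x)$, I would first note that $U^D_t=X_t-D_t\leq X_t$ for every $D\in\Theta$, so $\tau^D\leq\tau^{D^\infty}$ almost surely and hence $\mathbb{E}_x[e^{-q\tau^D}]\geq\mathbb{E}_x[e^{-q\tau^{D^\infty}}]=K_x$ by \eqref{inf.1}. No admissible strategy can then meet the constraint $\mathbb{E}_x[e^{-q\tau^D}]\leq K<K_x$, so by the convention in \eqref{F.1} we get $V(x;K)=-\infty$. For the boundary regime $K=K_x$, the lower bound $V(x;K_x)\geq0$ is \eqref{inf.2}; for the matching upper bound I would send $\Lambda\to\infty$, using that $b_\Lambda\to\infty$ (the inverse of the strictly increasing $\lambda$ of Proposition \ref{Lam.1}, via part (iii)), and write $V_\Lambda(x;K_x)=v_\Lambda^{b_\Lambda}(x;K_x)=\mathbb{E}_x[\int_0^{\tau_{b_\Lambda}}e^{-qt}\diff D^{b_\Lambda}_t]+\Lambda(K_x-\Psi_x(b_\Lambda))$. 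The first term is non-negative and vanishes by \eqref{div_b_to_inf}, while $\Psi_x(b_\Lambda)>K_x$ from Lemma \ref{decPsi1} makes the second term non-positive; hence $\inf_{\Lambda\geq0}V_\Lambda(x;K_x)\leq0$, and weak duality closes the case.

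The loose regime $K\in[\Psi_x(b_0),1]$ is handled with $\Lambda^*=0$: the threshold strategy $D^{b_0}$ is feasible since $\Psi_x(b_0)\leq K$, it is optimal for $V_0(\cdot;K)=V_0$ by Theorem \ref{L.V.1}, and $v^{D^{b_0}}(x)=v_0^{b_0}(x;K)$, so the sandwich gives $V(x;K)=v_0^{b_0}(x;K)$. The binding regime $K\in(K_x,\Psi_x(b_0))$ is the crux, and here I would rely on the monotonicity results already in hand. Proposition \ref{Lam.1} makes $\Lambda\mapsto b_\Lambda$ a continuous strictly increasing bijection from $(\bar\Lambda\vee0,\infty)$ onto $(b_0,\infty)$, and Lemma \ref{decPsi1} makes $b\mapsto\Psi_x(b)$ continuous and strictly decreasing from $\Psi_x(b_0)$ down to $K_x$ on $(b_0,\infty)$. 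Composing, $\Lambda\mapsto\Psi_x(b_\Lambda)$ is a strictly decreasing continuous bijection onto $(K_x,\Psi_x(b_0))$, so the intermediate value theorem yields a unique $\Lambda^*>0$ with $\Psi_x(b_{\Lambda^*})=K$; this is the optimal Lagrange multiplier, complementary slackness holds with a binding constraint, and the sandwich delivers $V(x;K)=\inf_{\Lambda\geq0}V_\Lambda(x;K)=V_{\Lambda^*}(x;K)$.

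Because the genuinely hard analytic work is already discharged in Proposition \ref{Lam.1} and Lemma \ref{decPsi1}, the remaining duality bookkeeping is routine. I expect the only subtle new step to be the $K=K_x$ upper bound, where the passage $\Lambda\to\infty$ a priori risks the product $\Lambda(K_x-\Psi_x(b_\Lambda))$ diverging; this is resolved cleanly by observing that the product is non-positive, so that no rate estimate on $\Psi_x(b_\Lambda)-K_x$ is required and the vanishing of the dividend term alone suffices.
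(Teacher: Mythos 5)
Your proposal is correct and follows essentially the same weak-duality sandwich as the paper: the loose regime via $\Lambda^*=0$ and feasibility of $D^{b_0}$, the binding regime via the unique multiplier with $\Psi_x(b_{\Lambda^*})=K$ (the paper parametrizes by $b^*$ with $\Psi_x(b^*)=K$ and sets $\Lambda^*=\lambda(b^*)$, which is the same thing by Proposition \ref{Lam.1}(iv)), and the boundary case $K=K_x$ via \eqref{inf.2} together with \eqref{div_b_to_inf} and the sign of $\Lambda(K_x-\Psi_x(b_\Lambda))$. The one place you genuinely diverge is the infeasible regime $K\in[0,K_x)$: the paper stays on the dual side and shows $\inf_{b>b_0}\bigl(\mathbb{E}_x[\int_0^{\tau_b}e^{-qt}\,\diff D^b_t]+\lambda(b)(K-\Psi_x(b))\bigr)=-\infty$ using $\lambda(b)\to\infty$ and $K-\Psi_x(b)\to K-K_x<0$, whereas you argue primal infeasibility directly from the pathwise comparison $U^D_t\leq X_t$, hence $\tau^D\leq\tau^{D^\infty}$ and $\mathbb{E}_x[e^{-q\tau^D}]\geq K_x>K$ for every $D\in\Theta$. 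Your route is more elementary, needs none of the machinery of Proposition \ref{Lam.1} for that case, and yields the stronger conclusion that the feasible set is actually empty rather than merely that the dual value diverges; the paper's route has the cosmetic advantage of keeping all four cases inside a single dual-bound template.
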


\begin{proof}
We will only prove $(1)$ since the other case is similar. 
Recall inequality \eqref{p_dual} and that $V_{\Lambda}(x;K)$ is defined as in \eqref{p_Lambda}.\\
(i) If $K\in[\Psi_{x}(b_{0}),1]$, then the threshold strategy at level $b_0$ is feasible for the problem \eqref{F.1} (see Section \ref{formProblem}), and therefore
\begin{equation*}
v^{{b_{0}}}_{0}(x;K)\leq V(x;K)\leq\displaystyle\inf_{\Lambda\geq0}V_{\Lambda}(x;K)\leq V_0(x;K)=v^{{b_{0}}}_{0}(x;K). 
\end{equation*}
Here the second inequality holds by \eqref{p_dual} and the last equality holds because the case $\Lambda = 0$ is solved by the threshold strategy with $b_0$ in the problem \eqref{p_Lambda} (which is equivalent to \eqref{p_Lambda_zero}).\\
(ii) If $K\in(K_{x},\Psi_{x}(b_{0}))$, since $b \mapsto\Psi_{x}(b)$ is continuous and strictly decreasing, by Lemma \ref{decPsi1}, to $K_x$, there exists a unique $b^{*}>b_{0}$ such that $K-\Psi_{x}(b^{*})=0$.  Therefore, 
\begin{equation*}
V(x;K)\leq\inf_{\Lambda\geq0}V_{\Lambda}(x;K)\leq V_{\lambda(b^*)}(x;K)=\mathbb{E}_x\left[\int_0^{\tau_{b^*}} e^{-qt}\diff D^{b^{*}}_t\right] \leq V(x;K).
\end{equation*}
Here the first inequality holds by \eqref{p_dual}. The equality follows from Proposition \ref{Lam.1}(iv) since $D^{b^{*}}$ is the optimal strategy for \eqref{p_Lambda} when $\Lambda=\lambda(b^{*})$. The last inequality follows since the threshold strategy at level $b^*$ is feasible for the problem \eqref{F.1}. \\
(iii) If $K=K_{x}$, by Lemma \ref{decPsi1}, we have that $\lambda(b)(K-\Psi_{x}(b))\leq0$ for all $b> b_{0}$. Hence,
\begin{equation*}
0\leq V(x;K)\leq\inf_{\Lambda\geq0}V_{\Lambda}(x;K)\leq
\inf_{b>b_0} \bigg(\mathbb{E}_x\left[\int_0^{\tau_{b}} e^{-qt}\diff D^b_t\right]+\lambda(b)(K-\Psi_{x}(b))\bigg)\leq 0.
\end{equation*}
Here the first inequality follows from \eqref{inf.2}, second by \eqref{p_dual}, and the last one by \eqref{div_b_to_inf}.\\
(iv) Finally, if $K\in[0,K_{x})$, then Lemma \ref{decPsi1} gives $\displaystyle\lim_{b\rightarrow\infty}[K-\Psi_{x}(b)]=K-K_{x}<0$. Hence, we get that  
\begin{equation*}
V(x;K)\leq\displaystyle\inf_{\Lambda\geq0}V_{\Lambda}(x;K)\leq
\inf_{b>b_0}  \bigg(\mathbb{E}_x\left[\int_0^{\tau_{b}} e^{-qt}\diff D^b_t\right]+\lambda(b)(K-\Psi_{x}(b))\bigg)=-\infty,
	\end{equation*}
where the second inequality holds by \eqref{p_dual} and the last equality follows from Proposition \ref{Lam.1}.
\end{proof}
\begin{remark}\label{rem.1}
	Consider the case  Assumption \ref{assump_avoid_all_zero} is violated. By Remark \ref{remark_b_zero_all},  for all $\Lambda \geq 0$, we must have $b_\Lambda = 0$ and hence $V_{\Lambda}(x;K)=\mathbb{E}_x\left[\displaystyle\int_0^{\tau_{0}} e^{-qt}
	\diff D^{0}_{t}\right]+\Lambda(K-\Psi_{x}(0))$. If $K\in[\Psi_{x}(b_{0}),1]$, then the threshold strategy at level $0$ is feasible for the problem \eqref{F.1} (see Section \ref{formProblem}), and therefore
	\begin{equation*}
		v^{0}_{0}(x;K)\leq V(x;K)\leq\displaystyle\inf_{\Lambda\geq0}V_{\Lambda}(x;K)\leq V_0(x;K)=v^{0}_{0}(x;K). 
	\end{equation*}	
On the other hand, if $K\in[0,\Psi_{x}(0))$, we obtain
	\begin{equation*}
		V(x;K)\leq \inf_{\Lambda\geq0}V_{\Lambda}(x;K)=\inf_{\Lambda\geq0}\left(\mathbb{E}_x\left[\displaystyle\int_0^{\tau_{0}} e^{-qt}
		\diff D^{0}_{t}\right]+\Lambda(K-\Psi_{x}(0))\right)=-\infty.	
	\end{equation*}
	In sum, we have
	\begin{equation*}
		V(x;K)=
		\begin{cases}
			v^{0}_{0}(x,K),&\text{if}\ K\in[\Psi_{x}(0),1],\\
			-\infty,&\text{if}\ K\in[0,\Psi_{x}(0)).
		\end{cases}
	\end{equation*}
\end{remark}

\section{Spectrally positive case}\label{specpos}

In this section, we solve analogous problems driven by a spectrally positive \lev process $\overline{Y}$.  We assume that its dual process $Y = -\overline{Y}$ has its Laplace exponent $\psi_Y$ as in  \eqref{lap_exp_Y} so that its right inverse and scale function are given by $\varphi(q)$ and $\mathbb{W}^{(q)}$, respectively.  We also define the drift-changed process $\overline{X} =\{\overline{X}_t = \overline{Y}_t - \delta t; t \geq 0 \}$ whose dual $X = - \overline{X}$ has its Laplace exponent $\psi$ as in \eqref{laplace_exponent}, right inverse $\Phi(q)$ and scale function $W^{(q)}$ described in Section  \ref{section_scale_functions}. We denote by $\overline{\E}_x$ the expectation with respect to the law of the process $\overline{Y}$  when it starts at $x$.  
In addition, for $x\geq0$ and $0< K\leq 1$, we define $\overline{V}(x;K)$, $\overline{v}_{\Lambda}^D(x;K)$, $\overline{V}_\Lambda(x;K)$ and  $\overline{v}_\Lambda(x)$ analogously to \eqref{F.1},  \eqref{lm}, \eqref{p_Lambda} and \eqref{V_Lambda_def}, respectively.

We first solve the optimal dividend problem with terminal payoff/penalty \eqref{p_Lambda_zero} with $X$ replaced with $\overline{Y}$. Similarly to the spectrally negative L\'evy case, we define for $b \geq 0$ the threshold strategy $\overline{D}^b$ and the resulting controlled surplus process, which is a refracted spectrally positive L\'evy process defined as the unique strong solution to the stochastic differential equation,
\begin{equation}\label{re.1}
\overline{U}^b_{t}:=\overline{Y}_{t}-\overline{D}^b_t:=\overline{Y}_{t}-\delta\int_{0}^{t}1_{\{\overline{U}^b_{s}>b\}}\diff s,\ t\geq0.
\end{equation}
Let its ruin time be denoted by
\begin{align*}
\overline{\tau}_{b}:=\inf\{t>0: \overline{U}^{b}_t < 0 \}.
\end{align*}

\subsection{Scale functions under a change of measure.} \label{section_change_of_measure}
For each $ \beta \geq0$, we define the change of measure
\begin{equation}\label{change_meas}
\dfrac{\diff\tilde{\mathbb{P}}^{\beta}_{x}}{\diff\tilde{\mathbb{P}}_{x}}\biggr|_{\mathcal{F}_{t}}=e^{\beta(Y_{t}-x)-\psi_Y(\beta)t},\ x\in\R, t\geq0,
\end{equation}
where $\tilde{\mathbb{P}}_x$ is law of the process $Y$ when it starts at $x$. It is known that $Y$ is still a spectrally negative L\'evy process on  $(\Omega, \mathcal{F}, \tilde{\mathbb{P}}^{\beta})$ and the scale function of $Y$ on this probability space can be written
\begin{equation}\label{aux}
\begin{split}
\mathbb{W}_{ \beta }^{(u-\psi_Y( \beta ))}(x)&=e^{-\beta x}\mathbb{W}^{(u)}(x), \\
\mathbb{Z}_{ \beta}^{(u-\psi_{Y}(\beta))}(x)&=1+(u-\psi_Y( \beta ))\int_{0}^{x}e^{- \beta z}\mathbb{W}^{(u)}(z)\diff z,
\end{split}
\end{equation}
with $u-\psi_Y( \beta)\geq0$; see \cite[Remark 4]{AvPaPi07}. 
In particular, \eqref{lap_exp_Y} and \eqref{def_varphi} give $q-\psi_{Y}(\Phi(q))=\delta\Phi(q)$ and hence 
\begin{align}\label{wchange}
	\mathbb{W}^{(\delta\Phi(q))}_{\Phi(q)}(x)=e^{-\Phi(q)x}\mathbb{W}^{(q)}(x) \quad \text{and}\quad \mathbb{Z}^{(\delta\Phi(q))}_{\Phi(q)}(x)= 1+\delta\Phi(q)\int_{0}^{x}e^{-\Phi(q)z}\mathbb{W}^{(q)}(z)\diff z.
\end{align}

\subsection{Optimal dividend problem with terminal value} \label{problem_terminal_value_dual}
As in the case of  spectrally negative L\'evy process, we are  first interested in solving the problem \eqref{p_dual} for the spectrally positive case. For this purpose, first we need to study the optimal dividend problem with a terminal value for the process $\overline{Y}$. Using Theorems 5.(i) and 6.(iii) in \cite{KyLo} we have the following result, whose proof is deferred to  Appendix \ref{proof_lemma_r.1}.

\begin{proposition}\label{r.1}
For $x,b,q\geq0$, we have
\begin{align}
\overline{\Psi}_{x}(b):=\overline{\E}_x\left[e^{-q\overline{\tau}_{b}};\overline{\tau}_{b}<\infty\right]&=e^{-\Phi(q)x}\frac{\mathbb{Z}^{(\delta\Phi(q))}_{\Phi(q)}(b-x)}{\mathbb{Z}^{(\delta\Phi(q))}_{\Phi(q)}(b)},\label{d.1}\\
\intertext{where $\mathbb{Z}^{(\delta\Phi(q))}_{\Phi(q)}(x)$ is given in \eqref{wchange}, $\overline{\tau}_{b}:=\inf\{ t>0: \overline{U}^b_t=0 \}$ and}
\overline{\E}_x\biggr[\displaystyle \int_0^{\overline{\tau}_{b}} e^{-qt}\diff \overline{D}^{b}_{t}\biggl]&=\dfrac{\delta}{q} \Big(\mathbb{Z}^{(q)}(b-x)-\mathbb{Z}^{(q)}(b)\overline{\Psi}_{x}(b) \Big).\label{d.2}
\end{align}
\end{proposition}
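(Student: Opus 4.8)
The plan is to read both identities off the refracted-process fluctuation theory of \cite{KyLo} and then compress the resulting expressions using the Esscher transform of Section \ref{section_change_of_measure}. The starting point is that the controlled process $\overline{U}^b$ in \eqref{re.1} evolves as the spectrally positive process $\overline{Y}$ while it lies in $[0,b]$ and as $\overline{X}_t=\overline{Y}_t-\delta t$ while it lies above $b$; since $\overline{Y}$ has no negative jumps, $\overline{U}^b$ passes below $0$ continuously, so indeed $\overline{\tau}_b=\inf\{t>0:\overline{U}^b_t=0\}$ and creeping makes the downward identities clean. I would split into the regions $x\ge b$ and $0\le x\le b$. For $x\ge b$ the process runs as $\overline{X}$ until it first reaches $b$; because $\overline{X}$ is spectrally positive with dual $X$ of right inverse $\Phi(q)$, that first-passage time has Laplace transform $e^{-\Phi(q)(x-b)}$, so by the strong Markov property $\overline{\Psi}_x(b)=e^{-\Phi(q)(x-b)}\overline{\Psi}_b(b)$, and the dividends collected above $b$ factor through the value started at $b$. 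This reduces everything to $x\in[0,b]$.

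For the ruin transform \eqref{d.1}, I would apply the one-sided exit identity for the refracted process (Theorem 5.(i) of \cite{KyLo}), read in the dual picture $Y=-\overline{Y}$, $X=-\overline{X}$, to express $\overline{\Psi}_x(b)$ through the scale functions $\mathbb{W}^{(q)},\mathbb{Z}^{(q)}$ of $Y$ and $W^{(q)}$ of $X$ together with $\varphi(q)$ and $\Phi(q)$. The raw expression is not yet in the stated form; I would then invoke the change of measure \eqref{wchange}, which rewrites $1+\delta\Phi(q)\int_0^{\,\cdot} e^{-\Phi(q)z}\mathbb{W}^{(q)}(z)\diff z$ as $\mathbb{Z}^{(\delta\Phi(q))}_{\Phi(q)}(\cdot)$, and collapse the combination into $e^{-\Phi(q)x}\mathbb{Z}^{(\delta\Phi(q))}_{\Phi(q)}(b-x)/\mathbb{Z}^{(\delta\Phi(q))}_{\Phi(q)}(b)$. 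The result is consistent with the boundary checks $\overline{\Psi}_0(b)=1$ and, using $\mathbb{Z}^{(\delta\Phi(q))}_{\Phi(q)}(y)=1$ for $y\le0$, with the pure exponential decay $\overline{\Psi}_x(b)=e^{-\Phi(q)x}/\mathbb{Z}^{(\delta\Phi(q))}_{\Phi(q)}(b)$ for $x\ge b$ obtained above.

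For the discounted dividends \eqref{d.2} I would likewise apply the corresponding refracted-process identity (Theorem 6.(iii) of \cite{KyLo}). Alternatively, since $\diff\overline{D}^b_t=\delta\mathbf{1}_{\{\overline{U}^b_t>b\}}\diff t$, I would write the value as $\delta$ times the $q$-discounted occupation time of $(b,\infty)$ before $\overline{\tau}_b$ and evaluate it with the refracted resolvent of \cite{KyLo}. In either route the output is a linear combination of $\mathbb{Z}^{(q)}(b-x)$ and $\overline{\Psi}_x(b)$; matching the coefficients (again after the simplification \eqref{wchange}) yields $\tfrac{\delta}{q}\bigl(\mathbb{Z}^{(q)}(b-x)-\mathbb{Z}^{(q)}(b)\overline{\Psi}_x(b)\bigr)$, whose value at $x=0$ is $0$, as it must be.

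The main obstacle is bookkeeping rather than conceptual: transferring the spectrally negative statements of \cite{KyLo} to the spectrally positive process through the duality $Y=-\overline{Y}$ while keeping track of which right inverse ($\Phi(q)$ versus $\varphi(q)$) and which scale function governs each region, and then carrying out the Esscher simplification \eqref{wchange} without error. A secondary point requiring care is the event $\{\overline{\tau}_b<\infty\}$: when $\overline{X}$ drifts to $+\infty$ ruin need not occur, so the identities must be read on this event, which is precisely what the factor $e^{-\Phi(q)x}$ and the convergence of the relevant integrals encode.
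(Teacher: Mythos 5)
Your proposal follows essentially the same route as the paper: both pass to the dual spectrally negative refracted process (refraction level $-b$, started at $-x$, ruin becoming the up-crossing time of $0$), invoke Theorems 5.(i) and 6.(iii) of \cite{KyLo} for the one-sided exit and occupation-time identities, and then simplify via the Esscher relation \eqref{wchange}; your extra split into $x\geq b$ versus $x\leq b$ and the representation of the dividend value as $\delta$ times the discounted occupation time of $(b,\infty)$ are only cosmetic variants of what the paper does. The plan is correct, with the understanding that the actual translation of the cited formulas and the $\mathbb{Z}^{(\delta\Phi(q))}_{\Phi(q)}$ bookkeeping still need to be carried out, just as in the paper's own terse proof.
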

 Using \eqref{d.1} and \eqref{d.2}, we have the following result.

\begin{proposition}
	 For $b \geq 0$, we have
	\begin{equation}\label{d.3}
	\overline{v}_{\Lambda}^{b}(x):=\overline{\E}_x\left[\int_0^{\overline{\tau}_{b}} e^{-qt}\diff \overline{D}^b_t\right]- \Lambda \overline{\Psi}_{x}(b) =
	\begin{cases}
	\dfrac{\delta}{q}\mathbb{Z}^{(q)}(b-x)-\overline{k}_{x}(b,\Lambda),&\text{if}\ x\leq b,\\
	\vspace{-0.6cm}\\
	\dfrac{\delta}{q}-\overline{k}_{b}(b,\Lambda)e^{-\Phi(q)(x-b)},&\text{if}\ x>b,
	\end{cases}
	\end{equation}
	where for $x\geq0$,
	\begin{align*}
		\overline{k}_{x}(b,\Lambda)&:=\overline{\Psi}_{x}(b)\biggr(\dfrac{\delta}{q}\mathbb{Z}^{(q)}(b)+\Lambda\biggl).
	\end{align*}
\end{proposition}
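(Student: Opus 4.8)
The plan is to obtain both cases purely by substituting the two identities \eqref{d.1} and \eqref{d.2} of Proposition \ref{r.1} into the definition of $\overline{v}_\Lambda^b$, and then simplifying with elementary algebra together with the convention that the scale functions are constant on the negative half-line. First I would insert \eqref{d.2} into
\[
\overline{v}_{\Lambda}^{b}(x)=\overline{\E}_x\Bigl[\int_0^{\overline{\tau}_{b}} e^{-qt}\diff \overline{D}^b_t\Bigr]-\Lambda\,\overline{\Psi}_{x}(b),
\]
which yields $\overline{v}_\Lambda^b(x)=\tfrac{\delta}{q}\mathbb{Z}^{(q)}(b-x)-\tfrac{\delta}{q}\mathbb{Z}^{(q)}(b)\,\overline{\Psi}_x(b)-\Lambda\,\overline{\Psi}_x(b)$. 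Collecting the two terms carrying the factor $\overline{\Psi}_x(b)$ gives $\tfrac{\delta}{q}\mathbb{Z}^{(q)}(b-x)-\overline{\Psi}_x(b)\bigl(\tfrac{\delta}{q}\mathbb{Z}^{(q)}(b)+\Lambda\bigr)=\tfrac{\delta}{q}\mathbb{Z}^{(q)}(b-x)-\overline{k}_x(b,\Lambda)$, which is exactly the first line of the claimed formula. It is worth noting that, as an algebraic identity, this expression is valid for every $x\ge 0$; the split into $x\le b$ and $x>b$ arises only when one rewrites it in closed form.

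To recover the second line I would specialize to $x>b$. Since then $b-x<0$ and $\mathbb{Z}^{(q)}$ equals $1$ on $(-\infty,0]$ (the convention recorded in Section \ref{section_scale_functions}), the leading term collapses to $\tfrac{\delta}{q}\mathbb{Z}^{(q)}(b-x)=\tfrac{\delta}{q}$. It then remains to show $\overline{k}_x(b,\Lambda)=\overline{k}_b(b,\Lambda)\,e^{-\Phi(q)(x-b)}$, which by the definition of $\overline{k}$ reduces to the factorization $\overline{\Psi}_x(b)=e^{-\Phi(q)(x-b)}\,\overline{\Psi}_b(b)$. I would read this off the explicit expression \eqref{d.1}: for $x\ge b$ one has $\mathbb{Z}^{(\delta\Phi(q))}_{\Phi(q)}(b-x)=1$, because by \eqref{wchange} the integral defining $\mathbb{Z}^{(\delta\Phi(q))}_{\Phi(q)}$ integrates $\mathbb{W}^{(q)}$, which vanishes on the negative half-line; combined with $\mathbb{Z}^{(\delta\Phi(q))}_{\Phi(q)}(0)=1$ this gives $\overline{\Psi}_x(b)=e^{-\Phi(q)x}/\mathbb{Z}^{(\delta\Phi(q))}_{\Phi(q)}(b)$ and $\overline{\Psi}_b(b)=e^{-\Phi(q)b}/\mathbb{Z}^{(\delta\Phi(q))}_{\Phi(q)}(b)$, whose quotient is precisely $e^{-\Phi(q)(x-b)}$. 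Substituting this back produces $\overline{v}_\Lambda^b(x)=\tfrac{\delta}{q}-\overline{k}_b(b,\Lambda)\,e^{-\Phi(q)(x-b)}$, the second line.

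The argument is essentially bookkeeping, so I do not expect any genuine analytic obstacle; Proposition \ref{r.1} already supplies the two fluctuation identities that do the real work. The only point that requires care is the consistent use of the convention that $\mathbb{Z}^{(q)}$ and $\mathbb{Z}^{(\delta\Phi(q))}_{\Phi(q)}$ are equal to $1$ on $(-\infty,0]$, since this is exactly what makes both the truncation $\mathbb{Z}^{(q)}(b-x)=1$ and the exponential factorization of $\overline{\Psi}_x(b)$ hold cleanly for $x>b$. Once these are invoked, assembling the two cases completes the proof.
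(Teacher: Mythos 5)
Your proof is correct and follows exactly the route the paper intends: the proposition is stated as an immediate consequence of Proposition \ref{r.1}, obtained by substituting \eqref{d.1} and \eqref{d.2} into the definition of $\overline{v}_\Lambda^b$ and using the convention that $\mathbb{Z}^{(q)}$ and $\mathbb{Z}^{(\delta\Phi(q))}_{\Phi(q)}$ equal $1$ on the negative half-line. Your careful justification of the factorization $\overline{\Psi}_x(b)=e^{-\Phi(q)(x-b)}\overline{\Psi}_b(b)$ for $x>b$ is exactly the bookkeeping the paper leaves implicit.
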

In order to select the optimal barrier, we apply smooth fit. Note that, by \eqref{d.3}, $\overline{v}_{\Lambda}^{b}$ is continuous on $[0,\infty)$ for any choice of $b$. Here, we will study the smoothness of  $\overline{v}_{\Lambda}^{b}$ at $x=b$  to propose a candidate barrier level $\overline{b}_{\Lambda}$ such that $\overline{v}_{\Lambda}^{\overline{b}_{\Lambda}}$ is $\hol^{1}(0,\infty)$ and $\hol^{2}(0,\infty)$ when $\overline{Y}$ is of bounded and unbounded variation, respectively. By differentiating \eqref{d.3},  we see that
\begin{equation}\label{d.4}
\overline{v}_{\Lambda}^{b\prime}(x)=
\begin{cases}
-\delta\mathbb{W}^{(q)}(b-x)+\Phi(q) (\overline{k}_{x}(  b ,\Lambda)+\delta\overline{k}_{b}(b,\Lambda)\mathbb{W}^{(q)}(b-x)),&\text{if}\ x<b,\\
\Phi(q)\overline{k}_{b}(b,\Lambda)e^{-\Phi(q)(x-b)},&\text{if}\ x>b,
\end{cases}
\end{equation}
and for the unbounded variation case 
\begin{equation}\label{d.5}
\overline{v}_{\Lambda}^{b\prime\prime}(x)=
\begin{cases}
\delta\mathbb{W}^{(q)\prime}(b-x)-[\Phi(q)]^{2} \overline{k}_{x}( b ,\Lambda)&\\
\hspace{2.42cm}-\delta\Phi(q)\overline{k}_{b}(b,\Lambda)[\Phi(q)\mathbb{W}^{(q)}(b-x)+\mathbb{W}^{(q)\prime}(b-x)],&\text{if}\ x<b,\\
-[\Phi(q)]^{2}\overline{k}_{b}(b,\Lambda)e^{-\Phi(q)(x-b)},&\text{if}\ x>b,
\end{cases}
\end{equation}
where we recall that, if $Y$ is of unbounded variation,  $\mathbb{W}^{(q)}$ is $\hol^{1}$ on $(0,\infty)$. From \eqref{d.4}  and \eqref{d.5}, together with \eqref{eq:Wqp0} and \eqref{W_zero_derivative}, we have the following result.
\begin{lemma} \label{smoothnes_sp}
		Suppose $b > 0$ is such that
		\begin{equation}\label{d.8.0}
		\overline{k}_{b}(b,\Lambda)=\dfrac{1}{\Phi(q)},\quad\text{or equivalently}\quad \Lambda e^{-\Phi(q)b} = s(b),
		\end{equation}
		where
			\begin{equation}\label{d.8.1}
			s(b):=\frac{1}{\Phi(q)}\mathbb{Z}^{(\delta\Phi(q))}_{\Phi(q)}(b)-\dfrac{\delta e^{-\Phi(q)b}}{q}\mathbb{Z}^{(q)}(b),\ b> 0.
			\end{equation}
			Then, the function $\overline{v}_{\Lambda}^{b}$ is $C^1(0, \infty)$ and $C^2(0, \infty)$ for the case of bounded and unbounded variation, respectively.
	\end{lemma}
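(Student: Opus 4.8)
The plan is to treat this as a smooth-pasting (verification) problem concentrated entirely at the single junction $x=b$. On the open intervals $(0,b)$ and $(b,\infty)$ the function $\overline{v}_\Lambda^b$ is manifestly as smooth as required: on $(b,\infty)$ it is a constant minus an exponential in \eqref{d.3}, hence $C^\infty$, while on $(0,b)$ it is built from $\mathbb{Z}^{(q)}$ and $\overline{\Psi}_\cdot(b)$, whose regularity is inherited from that of $\mathbb{W}^{(q)}$ (which is $C^1$ on $(0,\infty)$ in the unbounded variation case, and whose continuity alone suffices in the bounded variation case). Since continuity of $\overline{v}_\Lambda^b$ on $[0,\infty)$ was already noted after \eqref{d.3}, it remains only to match the one-sided derivatives at $x=b$. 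Before doing so I would record the equivalence of the two forms of the hypothesis in \eqref{d.8.0}: evaluating \eqref{d.1} at $x=b$ and using $\mathbb{Z}^{(\delta\Phi(q))}_{\Phi(q)}(0)=1$ (from \eqref{wchange}) gives $\overline{\Psi}_b(b)=e^{-\Phi(q)b}/\mathbb{Z}^{(\delta\Phi(q))}_{\Phi(q)}(b)$, so that $\overline{k}_b(b,\Lambda)=1/\Phi(q)$ rearranges, after clearing denominators, precisely into $\Lambda e^{-\Phi(q)b}=s(b)$ with $s$ as in \eqref{d.8.1}.

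For the $C^1$ claim I would rewrite the $x<b$ branch of \eqref{d.4} by collecting the two terms carrying $\mathbb{W}^{(q)}(b-x)$, namely
\[\overline{v}_\Lambda^{b\prime}(x)=\delta\mathbb{W}^{(q)}(b-x)\bigl(\Phi(q)\overline{k}_b(b,\Lambda)-1\bigr)+\Phi(q)\overline{k}_x(b,\Lambda),\qquad x<b.\]
Under the hypothesis $\Phi(q)\overline{k}_b(b,\Lambda)=1$ the first summand vanishes identically on $(0,b)$, leaving $\overline{v}_\Lambda^{b\prime}(x)=\Phi(q)\overline{k}_x(b,\Lambda)$, which is continuous in $x$ up to $b$ and whose limit $\Phi(q)\overline{k}_b(b,\Lambda)$ coincides with the value $\overline{v}_\Lambda^{b\prime}(b+)=\Phi(q)\overline{k}_b(b,\Lambda)$ read off from the $x>b$ branch of \eqref{d.4}. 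Hence $\overline{v}_\Lambda^{b\prime}(b-)=\overline{v}_\Lambda^{b\prime}(b+)$ and $\overline{v}_\Lambda^b\in C^1(0,\infty)$. I would note that in the bounded variation case $\mathbb{W}^{(q)}(0)=(c-\delta)^{-1}\neq 0$ by \eqref{eq:Wqp0}, so this cancellation is exactly where the hypothesis is needed; in the unbounded variation case $\mathbb{W}^{(q)}(0)=0$ makes the junction term disappear regardless, but the same computation covers both.

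For the $C^2$ claim in the unbounded variation case I would perform the analogous regrouping in the $x<b$ branch of \eqref{d.5}, collecting the terms carrying $\mathbb{W}^{(q)\prime}(b-x)$:
\[\overline{v}_\Lambda^{b\prime\prime}(x)=\delta\mathbb{W}^{(q)\prime}(b-x)\bigl(1-\Phi(q)\overline{k}_b(b,\Lambda)\bigr)-[\Phi(q)]^2\overline{k}_x(b,\Lambda)-\delta[\Phi(q)]^2\overline{k}_b(b,\Lambda)\mathbb{W}^{(q)}(b-x).\]
Here the hypothesis again annihilates the $\mathbb{W}^{(q)\prime}(b-x)$ term identically on $(0,b)$; this cancellation is the crux of the argument, because $\mathbb{W}^{(q)\prime}(0+)$ is infinite when $\sigma=0$ and $\Pi(0,\infty)=\infty$ by \eqref{W_zero_derivative}, so a naive term-by-term limit would produce an indeterminate $\infty\cdot 0$. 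Removing that term first and then letting $x\uparrow b$, using $\mathbb{W}^{(q)}(0)=0$ and the continuity of $\overline{k}_\cdot(b,\Lambda)$, yields $\overline{v}_\Lambda^{b\prime\prime}(b-)=-[\Phi(q)]^2\overline{k}_b(b,\Lambda)$, which equals $\overline{v}_\Lambda^{b\prime\prime}(b+)$ from the $x>b$ branch of \eqref{d.5}. Thus $\overline{v}_\Lambda^b\in C^2(0,\infty)$. The one genuine obstacle is precisely this need to cancel the possibly infinite $\mathbb{W}^{(q)\prime}(0+)$ contribution algebraically via the smooth-fit condition \emph{before} passing to the limit, rather than evaluating the limit of each summand separately.
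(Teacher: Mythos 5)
Your proof is correct and follows exactly the route the paper intends: the paper's one-line justification points to \eqref{d.4}, \eqref{d.5}, \eqref{eq:Wqp0} and \eqref{W_zero_derivative}, and your argument is precisely the elaboration of that computation, matching the one-sided derivatives at $x=b$ after regrouping the terms carrying $\mathbb{W}^{(q)}(b-x)$ and $\mathbb{W}^{(q)\prime}(b-x)$. Your observation that the smooth-fit condition must cancel the $\mathbb{W}^{(q)\prime}(b-x)$ term algebraically before passing to the limit (since $\mathbb{W}^{(q)\prime}(0+)$ may be infinite) is a worthwhile detail that the paper leaves implicit.
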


\begin{lemma}\label{opt.1}
If $\Lambda>\dfrac{1}{\Phi(q)}-\dfrac{\delta}{q}$, then there exists a unique $b>0$ that satisfies \eqref{d.8.0}.
\end{lemma}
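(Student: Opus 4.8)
The plan is to recast the defining relation \eqref{d.8.0} into a form suited to a monotonicity-plus-intermediate-value argument. Multiplying $\Lambda e^{-\Phi(q)b}=s(b)$ through by $e^{\Phi(q)b}$ and recalling \eqref{d.8.1}, I would introduce the function
\[
T(b):=e^{\Phi(q)b}s(b)=\frac{e^{\Phi(q)b}}{\Phi(q)}\mathbb{Z}^{(\delta\Phi(q))}_{\Phi(q)}(b)-\frac{\delta}{q}\mathbb{Z}^{(q)}(b),\qquad b\geq 0,
\]
so that \eqref{d.8.0} holds precisely when $T(b)=\Lambda$. The whole statement then reduces to showing that $T$ is a strictly increasing bijection from $(0,\infty)$ onto $\bigl(\tfrac{1}{\Phi(q)}-\tfrac{\delta}{q},\infty\bigr)$.

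First I would evaluate the left endpoint: since $\mathbb{Z}^{(\delta\Phi(q))}_{\Phi(q)}(0)=\mathbb{Z}^{(q)}(0)=1$ by \eqref{wchange} and the definition of $\mathbb{Z}^{(q)}$, one obtains $T(0)=\tfrac{1}{\Phi(q)}-\tfrac{\delta}{q}$, which is exactly the threshold appearing in the hypothesis. Next I would differentiate $T$. Writing $A(b):=\mathbb{Z}^{(\delta\Phi(q))}_{\Phi(q)}(b)$ and $B(b):=\mathbb{Z}^{(q)}(b)$ and reading off from \eqref{wchange} that $A'(b)=\delta\Phi(q)e^{-\Phi(q)b}\mathbb{W}^{(q)}(b)$ and $B'(b)=q\mathbb{W}^{(q)}(b)$, the two contributions proportional to $\mathbb{W}^{(q)}(b)$ cancel exactly, leaving the clean expression
\[
T'(b)=e^{\Phi(q)b}\,\mathbb{Z}^{(\delta\Phi(q))}_{\Phi(q)}(b).
\]
As $\mathbb{Z}^{(\delta\Phi(q))}_{\Phi(q)}(b)\geq 1$, this is strictly positive, so $T$ is strictly increasing; this yields uniqueness immediately.

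It then remains to control the behaviour at infinity. From $T'(b)\geq e^{\Phi(q)b}$ together with $\Phi(q)>0$ I would conclude $T(b)\to\infty$ as $b\to\infty$. Hence $T$ maps $(0,\infty)$ continuously and strictly monotonically onto $\bigl(T(0),\infty\bigr)=\bigl(\tfrac{1}{\Phi(q)}-\tfrac{\delta}{q},\infty\bigr)$, and by the intermediate value theorem every $\Lambda$ in this interval — exactly the range singled out in the hypothesis — is attained at a unique $b>0$, the asserted solution of \eqref{d.8.0}. I expect no serious obstacle: the only step demanding a little care is the derivative computation, where the cancellation of the $\mathbb{W}^{(q)}(b)$ terms collapses $T'$ to the expression above; everything else is monotonicity and continuity.
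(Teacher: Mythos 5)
Your proof is correct, and while it runs on the same monotonicity-plus-intermediate-value engine as the paper's, the reparametrization you choose makes it a genuinely cleaner route. The paper works with $s(b)$ itself: it shows $s'(b)=\frac{\delta\Phi(q)e^{-\Phi(q)b}}{q}\mathbb{Z}^{(q)}(b)>0$ and then must establish $\lim_{b\to\infty}s(b)=\infty$, which costs three separate l'H\^opital computations resting on the asymptotics \eqref{W_q_limit}. You instead study $T(b)=e^{\Phi(q)b}s(b)$, and your derivative identity $T'(b)=e^{\Phi(q)b}\,\mathbb{Z}^{(\delta\Phi(q))}_{\Phi(q)}(b)$ is correct (the two contributions $\delta\mathbb{W}^{(q)}(b)$ do cancel exactly), so that $T'(b)\geq e^{\Phi(q)b}$ gives both strict monotonicity and the divergence $T(b)\to\infty$ for free, with no scale-function asymptotics needed. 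Your version also closes a small loose end in the paper's argument: knowing only that $s$ increases strictly from $\frac{1}{\Phi(q)}-\frac{\delta}{q}$ to $\infty$ settles uniqueness of the solution of $\Lambda e^{-\Phi(q)b}=s(b)$ immediately only when $\Lambda\geq0$ (a decreasing left side against an increasing right side); in the admissible regime $\frac{1}{\Phi(q)}-\frac{\delta}{q}<\Lambda<0$ both sides of that equation are increasing, and the clean way to finish is exactly your observation that $T=e^{\Phi(q)\cdot}s$ is strictly increasing. Note finally that your $T$ coincides with the function $\tilde{\lambda}$ introduced in Section \ref{section_constrained_dual}, whose strict monotonicity the paper asserts there as following ``immediately'' from that of $s$; your computation of $T'$ is in fact the honest justification of that assertion as well, so nothing used downstream is lost by your approach.
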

\begin{proof}
In order to prove the lemma we will show that $s(b)$ as in \eqref{d.8.1} is strictly increasing and satisfies  
\begin{equation}\label{d.8.1.0}
\displaystyle\lim_{b\rightarrow0}s(b)=\dfrac{1}{\Phi(q)}-\dfrac{\delta}{q}\quad \text{and}\quad \lim_{b\rightarrow\infty}s(b)=\infty.
\end{equation}

(i) Since $s'(b)=\dfrac{\delta\Phi(q) e^{-\Phi(q)b}}{q}\mathbb{Z}^{(q)}(b)>0$, for all $b > 0$, we have that $s (\cdot)$ is strictly increasing on $(0,\infty)$. 
	
(ii) Letting $b\rightarrow0$ in \eqref{d.8.1}, it is easy to see that the first limit of \eqref{d.8.1.0} holds.

(iii) Note that 
\begin{equation}\label{d.8.2}
s(b)=\frac{\delta\mathbb{Z}^{(q)}(b)}{q\Phi(q)e^{\Phi(q)b}}\biggr(\dfrac{qe^{\Phi(q)b}\mathbb{Z}^{(\delta\Phi(q))}_{\Phi(q)}(b)}{\delta\mathbb{Z}^{(q)}(b)}-\Phi(q)\biggl).
\end{equation}
Using l'H\^opital's rule, \eqref{W_q_limit}  and that $\varphi(q) > \Phi(q)$, the following limits can be verified:
\begin{align*}
\lim_{b\rightarrow\infty}\dfrac{\mathbb{Z}^{(q)}(b)}{e^{\Phi(q)b}}&=\lim_{b\rightarrow\infty}q[\Phi(q)]^{-1}e^{(\varphi(q)-\Phi(q))b}e^{-\varphi(q)b}\mathbb{W}^{(q)}(b)=\infty,\\
\lim_{b\rightarrow\infty}\dfrac{\mathbb{Z}^{(\delta\Phi(q))}_{\Phi(q)}(b)}{e^{(\varphi(q)-\Phi(q))b}}&=\lim_{b\rightarrow\infty} \delta\Phi(q)\dfrac{e^{-\varphi(q)b}\mathbb{W}^{(q)}(b)}{\varphi(q)-\Phi(q)}=\delta\Phi(q)\dfrac{\psi'_{Y}(\varphi(q))^{-1}}{\varphi(q)-\Phi(q)},\\
\lim_{b\rightarrow\infty}\dfrac{qe^{\Phi(q)b}\mathbb{Z}^{(\delta\Phi(q))}_{\Phi(q)}(b)}{\delta\mathbb{Z}^{(q)}(b)}&=\lim_{b\rightarrow\infty}\dfrac{\Phi(q)\biggr(\dfrac{\mathbb{Z}^{(\delta\Phi(q))}_{\Phi(q)}(b)}{\delta e^{(\varphi(q)-\Phi(q))b}}+e^{-\varphi(q)b}\mathbb{W}^{(q)}(b)\biggl)}{e^{-\varphi(q)b}\mathbb{W}^{(q)}(b)} =\Phi(q)\bigg(1+\dfrac{\Phi(q)}{\varphi(q)-\Phi(q)}\biggr).
\end{align*}
Hence, it follows that $\displaystyle \lim_{b\to\infty}s(b)=\infty$.
\end{proof}

Now, we let $\bar{b}_\Lambda$ be as in Lemma \ref{opt.1} for the case $\Lambda>\dfrac{1}{\Phi(q)}-\dfrac{\delta}{q}$ and set it to zero otherwise.

(i) When $\Lambda>\dfrac{1}{\Phi(q)}-\dfrac{\delta}{q}$, applying \eqref{d.8.0}  in \eqref{d.3}, with $b=\bar{b}_\Lambda$,  we see that $\overline{v}_{\Lambda}^{\bar{b}_\Lambda}$  
is given by
\begin{equation}\label{d.9}
\overline{v}_{\Lambda}^{\bar{b}_\Lambda}(x)=
\begin{cases}
\dfrac{\delta}{q}\mathbb{Z}^{(q)}(\bar{b}_\Lambda-x)-\dfrac{e^{-\Phi(q)(x-\bar{b}_\Lambda)}}{\Phi(q)}\mathbb{Z}^{(\delta\Phi(q))}_{\Phi(q)}(\bar{b}_\Lambda-x),&\text{if}\ x\leq \bar{b}_\Lambda,\\
\dfrac{\delta}{q}-\dfrac{e^{-\Phi(q)(x-\bar{b}_\Lambda)}}{\Phi(q)},&\text{if}\ x>\bar{b}_\Lambda.
\end{cases}
\end{equation}

(ii) When $\Lambda\leq\dfrac{1}{\Phi(q)}-\dfrac{\delta}{q}$,  using \eqref{d.3} and because  $\overline{k}_0(0, \Lambda)=\dfrac{\delta}{q}+\Lambda$, we have
\begin{align}\label{v_0_sp}
\overline{v}^{0}_{\Lambda}(x)=\dfrac{\delta}{q}-e^{-\Phi(q)x}\bigg(\dfrac{\delta}{q}+\Lambda\bigg), \quad x \geq 0.
\end{align}

\begin{theorem}\label{L.V.2}
The optimal strategy for \eqref{p_Lambda_zero} consists of a threshold strategy at level $\bar{b}_\Lambda$.
\end{theorem}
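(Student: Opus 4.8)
The plan is to mirror the verification argument of Theorem \ref{L.V.1}, now for the spectrally positive driver $\overline{Y}$. First I would record the spectrally positive analogue of the verification lemma (cf.\ Lemma \ref{verificationlemma} and the spectrally positive treatments in \cite{Yin,BKY}): if $\overline{v}_{\Lambda}^{\bar{b}_{\Lambda}}$ is sufficiently smooth, satisfies the boundary condition $\overline{v}_{\Lambda}^{\bar{b}_{\Lambda}}(0)\geq -\Lambda$, and solves the associated HJB inequality for all $x>0$, then it coincides with $\overline{V}_{\Lambda}$ and $\overline{D}^{\bar{b}_{\Lambda}}$ is optimal for \eqref{p_Lambda_zero}. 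The required smoothness is exactly what $\bar{b}_{\Lambda}$ was engineered to provide in Lemma \ref{smoothnes_sp} (with existence and uniqueness from Lemma \ref{opt.1}). As in Lemma \ref{tussenlemma}, I would then reduce the HJB inequality to the first-order condition $\overline{v}_{\Lambda}^{\bar{b}_{\Lambda}\prime}(x)\geq 1$ on $(0,\bar{b}_{\Lambda})$ and $\overline{v}_{\Lambda}^{\bar{b}_{\Lambda}\prime}(x)\leq 1$ on $(\bar{b}_{\Lambda},\infty)$, so that the whole theorem comes down to controlling a single derivative.

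For the case $\bar{b}_{\Lambda}>0$, I would feed the smooth-fit identity \eqref{d.8.0}, namely $\overline{k}_{\bar{b}_{\Lambda}}(\bar{b}_{\Lambda},\Lambda)=1/\Phi(q)$, into the derivative formula \eqref{d.4}. Above the threshold this collapses to $\overline{v}_{\Lambda}^{\bar{b}_{\Lambda}\prime}(x)=e^{-\Phi(q)(x-\bar{b}_{\Lambda})}\leq 1$, giving the upper inequality immediately. Below the threshold the same substitution cancels the $\mathbb{W}^{(q)}$ terms and leaves $\overline{v}_{\Lambda}^{\bar{b}_{\Lambda}\prime}(x)=\Phi(q)\overline{k}_{x}(\bar{b}_{\Lambda},\Lambda)=\Phi(q)\overline{\Psi}_{x}(\bar{b}_{\Lambda})\big(\tfrac{\delta}{q}\mathbb{Z}^{(q)}(\bar{b}_{\Lambda})+\Lambda\big)$. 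The bracket $\tfrac{\delta}{q}\mathbb{Z}^{(q)}(\bar{b}_{\Lambda})+\Lambda$ is forced to be strictly positive by the smooth-fit identity (since $\overline{\Psi}_{\bar{b}_{\Lambda}}(\bar{b}_{\Lambda})>0$ and their product equals $1/\Phi(q)>0$), while $x\mapsto\overline{\Psi}_{x}(\bar{b}_{\Lambda})$ is non-increasing, as is transparent from the explicit form \eqref{d.1} and probabilistically clear because the discounted ruin value decreases in the initial surplus. Hence $\overline{v}_{\Lambda}^{\bar{b}_{\Lambda}\prime}$ is non-increasing on $(0,\bar{b}_{\Lambda})$ and takes the value $1$ at $\bar{b}_{\Lambda}$, which yields $\overline{v}_{\Lambda}^{\bar{b}_{\Lambda}\prime}\geq 1$ there. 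This handles the bounded and unbounded variation regimes at once, without recourse to the second derivative.

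For the case $\bar{b}_{\Lambda}=0$, only the upper inequality is needed. Differentiating \eqref{v_0_sp} gives $\overline{v}_{\Lambda}^{0\prime}(x)=\Phi(q)\big(\tfrac{\delta}{q}+\Lambda\big)e^{-\Phi(q)x}$, and the defining hypothesis $\Lambda\leq\tfrac{1}{\Phi(q)}-\tfrac{\delta}{q}$ forces $\Phi(q)\big(\tfrac{\delta}{q}+\Lambda\big)\leq 1$, so $\overline{v}_{\Lambda}^{0\prime}(x)\leq 1$ for every $x>0$. It remains to check the boundary condition, which in fact holds with equality: both \eqref{d.9} and \eqref{v_0_sp}, evaluated at $x=0$ using the smooth-fit identity, give $\overline{v}_{\Lambda}^{\bar{b}_{\Lambda}}(0)=-\Lambda$, consistent with immediate ruin ($\overline{\Psi}_{0}(\bar{b}_{\Lambda})=1$). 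Invoking the verification lemma then completes the proof.

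I expect the genuinely delicate part to be the setup rather than the estimates: pinning down the correct spectrally positive verification lemma and the HJB reduction to the first-order condition (the analogues of Lemmas \ref{verificationlemma} and \ref{tussenlemma}), since the generator and the role of creeping downward through $0$ differ from the spectrally negative case. Once that scaffolding is in place, the linchpin is simply the positivity of $\tfrac{\delta}{q}\mathbb{Z}^{(q)}(\bar{b}_{\Lambda})+\Lambda$ together with the monotonicity of $\overline{\Psi}_{x}(\bar{b}_{\Lambda})$ in $x$; everything else reduces to routine differentiation of the explicit scale-function expressions in \eqref{d.4} and \eqref{v_0_sp}.
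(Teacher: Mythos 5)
Your proposal is correct and follows essentially the same route as the paper: reduce to the first-order condition $\overline{v}_{\Lambda}^{\bar{b}_{\Lambda}\prime}\gtrless 1$ via the verification scaffolding, use the smooth-fit identity \eqref{d.8.0} to collapse \eqref{d.4} to $e^{-\Phi(q)(x-\bar{b}_{\Lambda})}$ above the threshold and to $\Phi(q)\overline{k}_{x}(\bar{b}_{\Lambda},\Lambda)=e^{-\Phi(q)(x-\bar{b}_{\Lambda})}\mathbb{Z}^{(\delta\Phi(q))}_{\Phi(q)}(\bar{b}_{\Lambda}-x)$ below it, and conclude by monotonicity in $x$ together with the value $1$ at $\bar{b}_{\Lambda}$, plus the boundary check $\overline{v}_{\Lambda}^{\bar{b}_{\Lambda}}(0)=-\Lambda$. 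Your phrasing of the below-threshold monotonicity through $\overline{\Psi}_{x}(\bar{b}_{\Lambda})$ and the positive constant $\tfrac{\delta}{q}\mathbb{Z}^{(q)}(\bar{b}_{\Lambda})+\Lambda$ is just a repackaging of the paper's identical observation on the explicit product form.
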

\begin{proof}

In view of \eqref{d.9} and \eqref{v_0_sp}, we confirm that $\overline{v}_{\Lambda}^{\bar{b}_\Lambda}$ is sufficiently smooth. Hence, as  in the spectrally negative case, in order to verify that $\overline{D}^{\bar{b}_{\Lambda}}$ is the optimal strategy over all admissible strategies, it is sufficient to show that the cost function $\overline{v}_{\Lambda}^{\bar{b}_{\Lambda}}$, given by \eqref{d.9} and \eqref{v_0_sp}, satisfies \eqref{equiv_inequality2} and that $\overline{v}_{\Lambda}^{\bar{b}_\Lambda}(0)\geq -\Lambda$.
(i) Suppose $\bar{b}_\Lambda>0$, and so the threshold level $\bar{b}_\Lambda$ satisfies \eqref{d.8.0}. 
From \eqref{d.9} we have that
\begin{equation*}
\overline{v}_{\Lambda}^{\bar{b}_\Lambda\prime}(x)=
\begin{cases}
e^{-\Phi(q)(x-\bar{b}_{\Lambda})}\mathbb{Z}^{(\delta\Phi(q))}_{\Phi(q)}(\bar{b}_\Lambda-x),&\text{if}\ x\leq \bar{b}_\Lambda,\\
e^{-\Phi(q)(x-\bar{b}_\Lambda)},&\text{if}\ x>\bar{b}_\Lambda.
\end{cases}
\end{equation*}
Clearly $\overline{v}^{\bar{b}_\Lambda\prime}_{\Lambda}(x)< 1$ if $x>\bar{b}_\Lambda$. On the other hand, $\overline{v}^{\bar{b}_\Lambda\prime}_{\Lambda}(x)$ is strictly decreasing on $[0,\bar{b}_\Lambda]$ since $ x \mapsto e^{-\Phi(q)(x-\bar{b}_\Lambda)}$ is strictly decreasing  and $x \mapsto\mathbb{Z}^{(\delta\Phi(q))}_{\Phi(q)}(\bar{b}_\Lambda-x)$ is non-increasing in the interval.  This together with $\overline{v}^{\bar{b}_\Lambda\prime}_{\Lambda}(\bar{b}_\Lambda)=1$ shows $\overline{v}^{\bar{b}_\Lambda\prime}_{\Lambda}(x)\geq 1$ if $x\leq \bar{b}_\Lambda$. 

Finally we note that using \eqref{d.9}, \eqref{d.8.1}, and \eqref{d.8.0}
 \[
 \overline{v}^{\bar{b}_\Lambda}_{\Lambda}(0)=\dfrac{\delta}{q}\mathbb{Z}^{(q)}(\bar{b}_\Lambda)-\dfrac{e^{\Phi(q)\bar{b}_\Lambda}}{\Phi(q)}\mathbb{Z}^{(\delta\Phi(q))}_{\Phi(q)}(\bar{b}_\Lambda)=-e^{\Phi(q)\bar{b}_\Lambda} s(\bar{b}_\Lambda)=-\Lambda.
 \]	
	
(ii) Suppose $\overline{b}_{\Lambda}=0$. 
Since $\Lambda\leq\dfrac{1}{\Phi(q)}-\dfrac{\delta}{q}$, it follows that, for $x\geq 0$,
\begin{align*}
	\overline{v}^{0\,\prime}_{\Lambda}(x)&=\Phi(q)e^{-\Phi(q)x}\biggl(\dfrac{\delta}{q}+\Lambda\biggr)\leq e^{-\Phi(q)x}\leq 1.
\end{align*}
Finally, by \eqref{v_0_sp} we obtain that $\overline{v}^{0}_{\Lambda}(0)=-\Lambda$.
\end{proof}

\subsection{Constrained de Finetti's problem for spectrally positive L\'evy processes} \label{section_constrained_dual}

Now we consider the problem \eqref{F.1} driven by the spectrally positive \lev process $\overline{Y}$.
 Note that $\overline{D}^{\bar{b}_\Lambda}$, is the optimal strategy for \eqref{p_Lambda}, for any $K\in[0,1]$.  
 
Let us define $\tilde{\Lambda}:=\frac{1}{\Phi(q)}-\frac{\delta}{q}$. Following Lemma \ref{opt.1}, if $\tilde{\Lambda}\geq0$   we have $\overline{b}_{\Lambda}=0$ for  $\Lambda\in[0,\tilde{\Lambda}]$; on the other hand if $\tilde{\Lambda}<0$ then $\bar{b}_{\Lambda}>0$ for all $\Lambda>0$. 

 Similarly to Section \ref{const.1}, we need to establish the relationship between $\Lambda$ and its corresponding threshold level $\bar{b}_\Lambda$ given by Lemma \ref{opt.1}.  
From \eqref{d.8.0} we get that $\Lambda = \tilde{\lambda}(\bar{b}_\Lambda)$ for $\Lambda > \tilde{\Lambda}$ where 
\begin{equation*}
\tilde{\lambda}(b) =e^{\Phi(q)b} s(b),\ 
\end{equation*}
where $s$ is defined in \eqref{d.8.2}. Since $s$ is strictly increasing (see the proof of Lemma \ref{opt.1}) and satisfies \eqref{d.8.1.0}, it follows immediately that  $ \tilde{\lambda}$ is also strictly increasing,  $\displaystyle\lim_{b\rightarrow\infty} \tilde{\lambda}(b)=\infty$ and 
\begin{equation*}
\lim_{b\rightarrow \overline{b}_{0}} \tilde{\lambda}(b)=
\begin{cases}
\dfrac{1}{\Phi(q)}-\dfrac{\delta}{q},&\text{if}\ \overline{b}_{0}=0,\\
0, &\text{if}\ \overline{b}_{0}>0.
\end{cases}
\end{equation*}
Here, the convergence for the case $\overline{b}_{0} > 0$ holds by 
the fact that $$\displaystyle\lim_{b\to \overline{b}_{0}}\tilde{\lambda}(b)=\lim_{b\to \overline{b}_{0}}e^{\Phi(q)b} s(b)=e^{\Phi(q)\overline{b}_{0}} s(\overline{b}_{0})=0,$$ where the last equality follows because \eqref{d.8.0} and Lemma \ref{opt.1} imply $s(\overline{b}_{0}) = 0$. Note that $\overline{b}_{\tilde{\lambda}(b)}=b$ for all $b>\overline{b}_{0}$.

Next, we need to show that the function $b\mapsto\overline{\Psi}_{x}(b)$, given in \eqref{d.1}, is strictly decreasing with $x>0$ fixed. In case that $x=0$, we see that $\overline{\Psi}_{0}(b)=1$ by \eqref{d.1}. The proof of the following lemma is given in Appendix \ref{proof_mon_psi_SP}.
\begin{lemma}\label{mon_psi_SP}
Let $x>0$ be fixed. Then, the function $b \mapsto \overline{\Psi}_{x}(b)$ defined in \eqref{d.1} is strictly decreasing and satisfies
	\begin{equation*}
	\lim_{b\rightarrow0}\overline{\Psi}_{x}(b)=e^{-\Phi(q)x}\quad\text{and}\quad \lim_{b\rightarrow\infty}\overline{\Psi}_{x}(b)=e^{-\varphi(q)x}.
	\end{equation*}
\end{lemma}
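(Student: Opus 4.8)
The plan is to analyze $\overline{\Psi}_x(b)$ directly through its explicit formula \eqref{d.1}, namely
\[
\overline{\Psi}_x(b) = e^{-\Phi(q)x}\,\frac{\mathbb{Z}^{(\delta\Phi(q))}_{\Phi(q)}(b-x)}{\mathbb{Z}^{(\delta\Phi(q))}_{\Phi(q)}(b)}.
\]
Since the prefactor $e^{-\Phi(q)x}$ does not depend on $b$, the entire monotonicity question reduces to showing that the ratio
\[
R(b) := \frac{\mathbb{Z}^{(\delta\Phi(q))}_{\Phi(q)}(b-x)}{\mathbb{Z}^{(\delta\Phi(q))}_{\Phi(q)}(b)}
\]
is strictly decreasing in $b$ on $(0,\infty)$ for fixed $x>0$. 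First I would record from \eqref{wchange} that
\[
\mathbb{Z}^{(\delta\Phi(q))}_{\Phi(q)}(y) = 1 + \delta\Phi(q)\int_0^y e^{-\Phi(q)z}\mathbb{W}^{(q)}(z)\,\diff z,
\]
so its derivative is $\tfrac{\diff}{\diff y}\mathbb{Z}^{(\delta\Phi(q))}_{\Phi(q)}(y) = \delta\Phi(q)\,e^{-\Phi(q)y}\mathbb{W}^{(q)}(y)$, which is strictly positive for $y>0$ and vanishes for $y\le 0$. In particular $\mathbb{Z}^{(\delta\Phi(q))}_{\Phi(q)}$ is nondecreasing everywhere and strictly increasing on the positive half-line.

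\emph{The main step} is the sign of $R'(b)$. Writing $\zeta := \mathbb{Z}^{(\delta\Phi(q))}_{\Phi(q)}$ for brevity, the quotient rule gives
\[
R'(b) = \frac{\zeta'(b-x)\,\zeta(b) - \zeta(b-x)\,\zeta'(b)}{\zeta(b)^2},
\]
so it suffices to show the numerator is negative. Equivalently, I would prove that the logarithmic-derivative comparison
\[
\frac{\zeta'(b-x)}{\zeta(b-x)} < \frac{\zeta'(b)}{\zeta(b)}
\]
fails in the needed direction — more precisely, I must show $\zeta'(b-x)\zeta(b) < \zeta(b-x)\zeta'(b)$. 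Using the explicit form $\zeta'(y) = \delta\Phi(q)e^{-\Phi(q)y}\mathbb{W}^{(q)}(y)$, this reduces to comparing $e^{-\Phi(q)(b-x)}\mathbb{W}^{(q)}(b-x)\,\zeta(b)$ against $e^{-\Phi(q)b}\mathbb{W}^{(q)}(b)\,\zeta(b-x)$; after cancelling $e^{-\Phi(q)b}$ and using that $e^{\Phi(q)x}>1$ together with the monotonicity in \eqref{W_q_limit} of $e^{-\Phi(q)y}\mathbb{W}^{(q)}(y)$, I expect the inequality to follow once the $\zeta$ factors are controlled. \textbf{The hard part} will be handling the boundary region where $b-x \le 0$ (so $\zeta(b-x)=1$ and $\zeta'(b-x)=0$): there the numerator of $R'(b)$ is simply $-\zeta'(b) < 0$, giving strict decrease immediately, but one must patch this case cleanly to the case $b-x>0$ and confirm strictness throughout rather than merely monotonicity.

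For the limits, as $b\downarrow 0$ we have $b-x<0$ for any fixed $x>0$, so $\zeta(b-x)\to\zeta(-x)=1$ and $\zeta(b)\to\zeta(0)=1$, yielding $\lim_{b\to 0}\overline{\Psi}_x(b)=e^{-\Phi(q)x}$. For the limit as $b\to\infty$, I would use the growth rate from \eqref{W_q_limit}, namely $e^{-\varphi(q)y}\mathbb{W}^{(q)}(y)\nearrow \psi_Y'(\varphi(q))^{-1}$, to determine the asymptotics of $\zeta$. Since $\zeta'(y) = \delta\Phi(q)e^{-\Phi(q)y}\mathbb{W}^{(q)}(y) = \delta\Phi(q)e^{(\varphi(q)-\Phi(q))y}\big(e^{-\varphi(q)y}\mathbb{W}^{(q)}(y)\big)$ grows like a constant times $e^{(\varphi(q)-\Phi(q))y}$ (because $\varphi(q)>\Phi(q)$), integration shows $\zeta(y)\sim \text{const}\cdot e^{(\varphi(q)-\Phi(q))y}$; this is exactly the second limit computed in the proof of Lemma \ref{opt.1}. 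Consequently
\[
R(b)=\frac{\zeta(b-x)}{\zeta(b)} \xrightarrow{b\to\infty} e^{-(\varphi(q)-\Phi(q))x},
\]
and multiplying by the prefactor $e^{-\Phi(q)x}$ gives $\lim_{b\to\infty}\overline{\Psi}_x(b)=e^{-\varphi(q)x}$, as claimed. I would lean on the asymptotic computations already established in Lemma \ref{opt.1} to avoid redoing the l'Hôpital estimates.
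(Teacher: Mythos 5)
Your reduction of the monotonicity claim to the sign of the numerator $\zeta'(b-x)\zeta(b)-\zeta(b-x)\zeta'(b)$, your treatment of the region $b\le x$ (where the numerator is just $-\zeta'(b)<0$), and your computation of both limits are all correct and essentially match the paper. The limit as $b\to\infty$ via the asymptotics $\zeta(y)\sim C e^{(\varphi(q)-\Phi(q))y}$ is exactly what the paper obtains by l'H\^opital in the proof of Lemma \ref{opt.1}.

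However, there is a genuine gap at the main step, for $b>x$. The inequality you must prove, $\zeta'(b-x)\zeta(b)<\zeta(b-x)\zeta'(b)$, is precisely the statement that $\zeta'/\zeta=\delta\Phi(q)\,\mathbb{W}^{(\delta\Phi(q))}_{\Phi(q)}/\mathbb{Z}^{(\delta\Phi(q))}_{\Phi(q)}$ is strictly increasing, i.e.\ that $\mathbb{Z}^{(\delta\Phi(q))}_{\Phi(q)}/\mathbb{W}^{(\delta\Phi(q))}_{\Phi(q)}$ is strictly decreasing. The ingredients you propose to invoke --- that $e^{-\Phi(q)y}\mathbb{W}^{(q)}(y)=\mathbb{W}^{(\delta\Phi(q))}_{\Phi(q)}(y)$ is increasing and that $e^{\Phi(q)x}>1$ --- cannot deliver this: they give $\zeta'(b-x)<\zeta'(b)$, but simultaneously $\zeta(b-x)<\zeta(b)$, so the two factors in the product pull in opposite directions and the comparison is indeterminate. (Note also that \eqref{W_q_limit} asserts monotonicity of $e^{-\Phi(q)y}W^{(q)}(y)$ and of $e^{-\varphi(q)y}\mathbb{W}^{(q)}(y)$, not of $e^{-\Phi(q)y}\mathbb{W}^{(q)}(y)$; the latter is increasing, but for a different reason.) This ratio inequality is the nontrivial content of the lemma, and the paper proves it probabilistically: it shows that
\begin{equation*}
w(y_1,y_2):=\dfrac{\mathbb{Z}^{(\delta\Phi(q))}_{\Phi(q)}(y_1)}{\mathbb{Z}^{(\delta\Phi(q))}_{\Phi(q)}(y_{2})}-\dfrac{\mathbb{W}^{(\delta\Phi(q))}_{\Phi(q)}(y_1)}{\mathbb{W}^{(\delta\Phi(q))}_{\Phi(q)}(y_{2})}>0,\qquad y_1<y_2,
\end{equation*}
by exhibiting $e^{-\delta\Phi(q)(t\wedge\tau_{0,y_2})}w(Y_{t\wedge\tau_{0,y_2}},y_2)$ as a $\widetilde{\mathbb{P}}^{\Phi(q)}$-martingale and applying optional stopping, using that $w(\cdot,y_2)>0$ on the negative half-line and that $Y$ exits $[0,y_2]$ downward with positive probability. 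You need either this argument or some other proof of the decrease of $\mathbb{Z}^{(\delta\Phi(q))}_{\Phi(q)}/\mathbb{W}^{(\delta\Phi(q))}_{\Phi(q)}$ (e.g.\ its identification with a two-sided exit Laplace transform); as written, the sentence ``I expect the inequality to follow once the $\zeta$ factors are controlled'' is exactly where the proof is missing.
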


Finally, using similar arguments as in Theorem \ref{main.1} (noting that we have results analogous to Lemmas \ref{Lam.1}(iv) and \ref{decPsi1}), we obtain the following theorem. 
\begin{theorem}\label{main.2}
	Let $x > 0$ be fixed. Then, 
	\begin{equation*}
	\overline{V}(x;K)=
	\begin{cases}
	\overline{v}^{\bar{b}_0}_{0}(x;K),&\text{if}\ K\in\left[\overline{\Psi}_{x}(\bar{b}_0),1\right],\\
	\displaystyle\inf_{\Lambda\geq0} \overline{V}_{\Lambda}(x;K),&\text{if}\ K\in\left(e^{-\varphi(q)x},\overline{\Psi}_{x}(\bar{b}_0)\right),\\
	0,&\text{if}\ K=e^{-\varphi(q)x},\\
	-\infty,&\text{if}\ K\in[0,e^{-\varphi(q)x}).
	\end{cases}
	\end{equation*} 
\end{theorem}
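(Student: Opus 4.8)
The plan is to mirror the proof of Theorem \ref{main.1}, replacing each spectrally negative object by its spectrally positive counterpart: the ruin transform $\Psi_x$ by $\overline{\Psi}_x$ from \eqref{d.1}, the level $b_0$ by $\bar{b}_0$, the threshold map $\lambda$ by $\tilde{\lambda}$, and the limiting constant $K_x$ by $e^{-\varphi(q)x}=\lim_{b\to\infty}\overline{\Psi}_x(b)$ from Lemma \ref{mon_psi_SP}. The backbone is the weak-duality bound $\overline{V}(x;K)\leq\inf_{\Lambda\geq0}\overline{V}_\Lambda(x;K)$, obtained exactly as in \eqref{p_dual} from the min--max reformulation of the Lagrangian, together with the fact (Theorem \ref{L.V.2}) that for each $\Lambda$ the threshold strategy $\overline{D}^{\bar{b}_\Lambda}$ solves $\overline{V}_\Lambda$. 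I would split into the same four regimes for $K$ and in each produce matching upper and lower bounds that pinch $\overline{V}(x;K)$.

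In the regime $K\in[\overline{\Psi}_x(\bar{b}_0),1]$, I would first note that the threshold strategy at $\bar{b}_0$ is feasible, since its ruin transform equals $\overline{\Psi}_x(\bar{b}_0)\leq K$; the sandwich $\overline{v}^{\bar{b}_0}_0(x;K)\leq\overline{V}(x;K)\leq\inf_{\Lambda\geq0}\overline{V}_\Lambda(x;K)\leq\overline{V}_0(x;K)=\overline{v}^{\bar{b}_0}_0(x;K)$ then closes the case, the last equality holding because the unconstrained ($\Lambda=0$) problem is solved by $\overline{D}^{\bar{b}_0}$. In the regime $K\in(e^{-\varphi(q)x},\overline{\Psi}_x(\bar{b}_0))$, I would use that $b\mapsto\overline{\Psi}_x(b)$ is continuous and strictly decreasing (Lemma \ref{mon_psi_SP}) to produce a unique $b^*>\bar{b}_0$ with $\overline{\Psi}_x(b^*)=K$; then choosing $\Lambda=\tilde{\lambda}(b^*)$ and invoking the identity $\bar{b}_{\tilde{\lambda}(b^*)}=b^*$ makes the penalty term cancel in $\overline{V}_{\tilde{\lambda}(b^*)}(x;K)=\overline{v}^{b^*}_{\tilde{\lambda}(b^*)}(x)+\tilde{\lambda}(b^*)K$, leaving precisely the dividend value of $\overline{D}^{b^*}$, which is itself a feasible lower bound for $\overline{V}(x;K)$; the two bounds again pinch.

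For the boundary case $K=e^{-\varphi(q)x}$ and the infeasible case $K\in[0,e^{-\varphi(q)x})$, I would pass to the limit $b\to\infty$ along threshold strategies, which approach the do-nothing strategy $\overline{D}^\infty\equiv0$. Here I need two auxiliary facts: (a) the first-passage identity $\overline{\E}_x[e^{-q\overline{\tau}^\infty}]=e^{-\varphi(q)x}$, which follows from the spectrally negative first-passage-upward transform applied to the dual $Y=-\overline{Y}$ and matches the limit in Lemma \ref{mon_psi_SP}; and (b) the vanishing of the dividend value $\overline{\E}_x[\int_0^{\overline{\tau}_b}e^{-qt}\diff\overline{D}^b_t]\to0$ as $b\to\infty$, the analog of \eqref{div_b_to_inf}, obtained from the explicit expression \eqref{d.2} and the asymptotics of $\mathbb{Z}^{(q)}$ and $\overline{\Psi}_x$ established in the proof of Lemma \ref{opt.1}. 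With these in hand, when $K=e^{-\varphi(q)x}$ the do-nothing strategy is feasible and yields $\overline{V}(x;K)\geq0$, while $\tilde{\lambda}(b)(K-\overline{\Psi}_x(b))\leq0$ for every finite $b$ (since $\overline{\Psi}_x(b)>e^{-\varphi(q)x}$) forces $\inf_{b>\bar{b}_0}(\overline{\E}_x[\int_0^{\overline{\tau}_b}e^{-qt}\diff\overline{D}^b_t]+\tilde{\lambda}(b)(K-\overline{\Psi}_x(b)))\leq0$, giving $\overline{V}(x;K)=0$. When $K<e^{-\varphi(q)x}$, instead $K-\overline{\Psi}_x(b)\to K-e^{-\varphi(q)x}<0$ while $\tilde{\lambda}(b)\to\infty$, so the same infimum is $-\infty$, whence $\overline{V}(x;K)=-\infty$.

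I expect the main obstacle to be item (b), namely verifying that the threshold dividend value vanishes as $b\to\infty$: the expression \eqref{d.2} is a difference of two terms each of which diverges, so the cancellation requires the precise asymptotics of $\mathbb{Z}^{(q)}(b-x)$, $\mathbb{Z}^{(q)}(b)$ and $\overline{\Psi}_x(b)$ rather than their leading behavior alone. Once the two limit facts are secured, the four-case bookkeeping is a direct transcription of the spectrally negative argument, with the strict monotonicity of $\overline{\Psi}_x$ and the identity $\bar{b}_{\tilde{\lambda}(b)}=b$ playing the roles of Lemma \ref{decPsi1} and Proposition \ref{Lam.1}(iv).
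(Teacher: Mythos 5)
Your proposal is correct and follows essentially the same route as the paper, which proves Theorem \ref{main.2} only by remarking that the arguments of Theorem \ref{main.1} carry over once one has the analogues of Lemma \ref{decPsi1} and Proposition \ref{Lam.1}(iv) (namely Lemma \ref{mon_psi_SP} and the identity $\bar{b}_{\tilde{\lambda}(b)}=b$ for $b>\bar{b}_0$), so your four-case pinching argument is exactly the intended expansion. Regarding your item (b), the delicate asymptotic cancellation in \eqref{d.2} can be bypassed: no dividends are paid before the uncontrolled process $\overline{Y}$ first exceeds $b$, so the dividend value is bounded above by $(\delta/q)\,\overline{\E}_x\bigl[e^{-qT_b^+}\bigr]$ with $T_b^+$ the first passage time of $\overline{Y}$ above $b$, and this tends to $0$ as $b\to\infty$ since $T_b^+\to\infty$ almost surely.
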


\section{Numerical examples}\label{numerical_section}

In this section, we confirm the obtained results through a sequence of numerical examples for both spectrally negative and positive cases. Throughout this section, we set $q = 0.05$.

\subsection{Spectrally negative case}  We first consider the spectrally negative case as studied in Sections \ref{optimal_dividend_tv_sn} and \ref{const.1}. Here we assume that $X$ is of the form
\begin{equation}
 X_t - X_0= c t+ 0.2 B_t - \sum_{n=1}^{N_t} Z_n, \quad 0\le t <\infty, \label{X_phase_type}
\end{equation}
where $B=\{B_t : t\ge 0\}$ is a standard Brownian motion, $N=\{N_t: t\ge 0\}$ is a Poisson process with arrival rate $\kappa$, and  $Z=\{Z_n; n = 1,2,\ldots\}$ is an i.i.d.\ sequence of exponential variables with parameter $1$ (so that Assumption \ref{assump_completely_monotone} is satisfied). Here, the processes $B$, $N$, and $Z$ are assumed mutually independent.  
We refer the reader to \cite{Egami_Yamazaki_2010_2, KKR} for the forms of the corresponding scale functions. 

We consider the following two parameter sets:
\begin{description}
\item[Case 1] $\kappa = 1$, $\sigma = 0.2$, $c = 1.5$, $\delta = 1$
\item[Case 2] $\kappa = 0.01$, $\sigma = 0$, $c = 5$, $\delta = 0.1$
\end{description}
Here, \textbf{Case 2} corresponds to the case $\bar{\Lambda} = \infty$ where we have $b_\Lambda = 0$ for any choice of $\Lambda$ as in Remark \ref{remark_b_zero_all}.

We first show the optimal solutions for the problem considered in Section \ref{optimal_dividend_tv_sn} focusing on the case $\Lambda = 1$. Figure \ref{figure_xi} plots  the function $b \mapsto \xi_\Lambda(b)$ as in \eqref{xi_lambda} and \eqref{v5}.  Here, in \textbf{Case 1}, it attains a global maximum and the maximizer becomes $b_\Lambda$ by \eqref{b1}. In contrast, in \textbf{Case 2}, it is monotonically decreasing and, by \eqref{b1}, we have $b_\Lambda = 0$.  In Figure \ref{figure_optimality}, we plot the optimal value function $x \mapsto V_\Lambda(x) = v_\Lambda^{b_\Lambda}(x)$ along with the suboptimal value functions $v_\Lambda^{b}$ for the choice of $b = 0,  \bar{b}_\Lambda/2, 3 \bar{b}_\Lambda/2$ for \textbf{Case 1} and $b = 2,4,6$ for \textbf{Case 2}. In both cases, we confirm that $V_\Lambda$ dominates the suboptimal ones uniformly in $x$.

\begin{figure}[htbp]
\begin{center}
\begin{minipage}{1.0\textwidth}
\centering
\begin{tabular}{ccc}
 \includegraphics[scale=0.35]{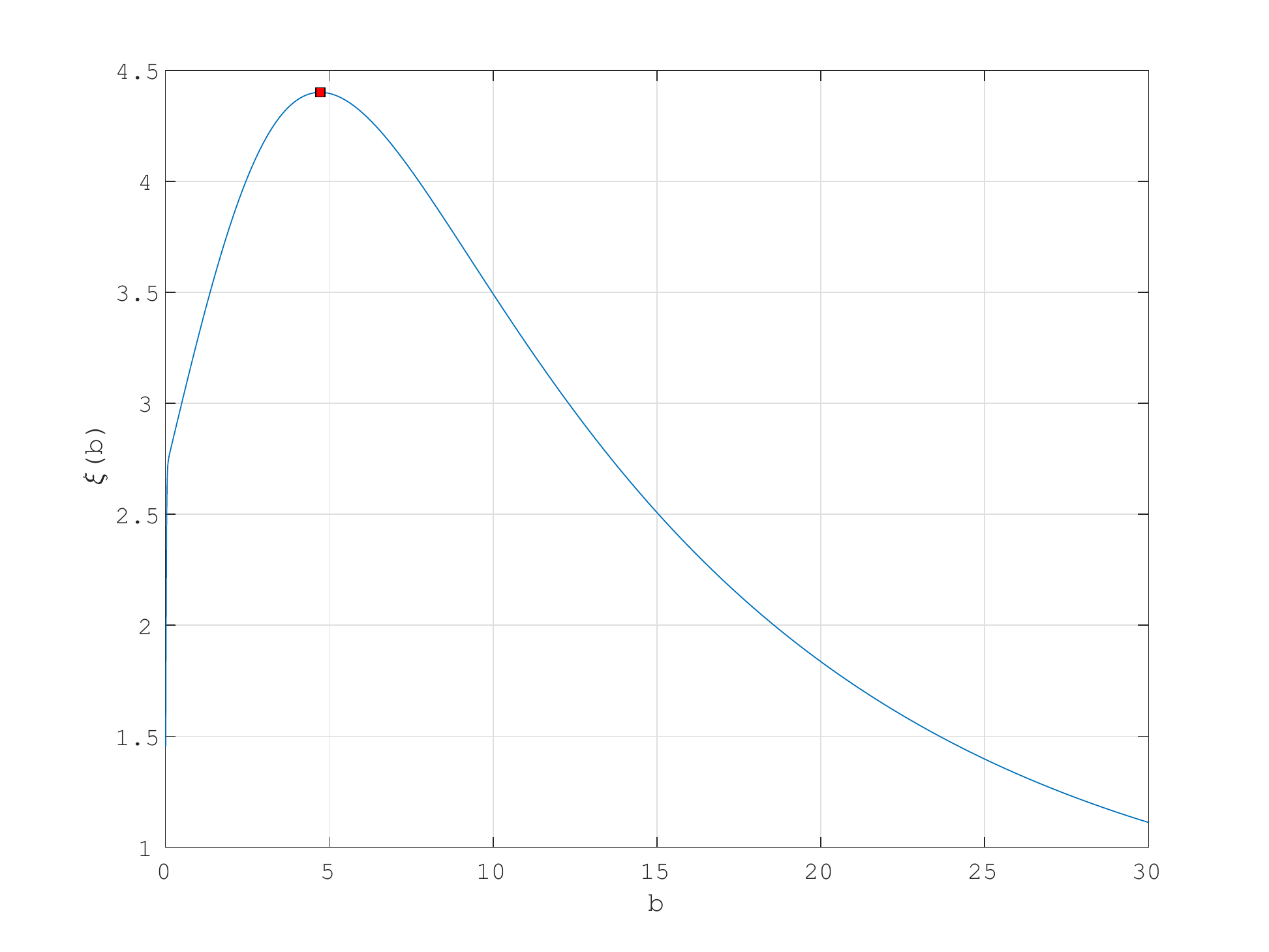} & \includegraphics[scale=0.35]{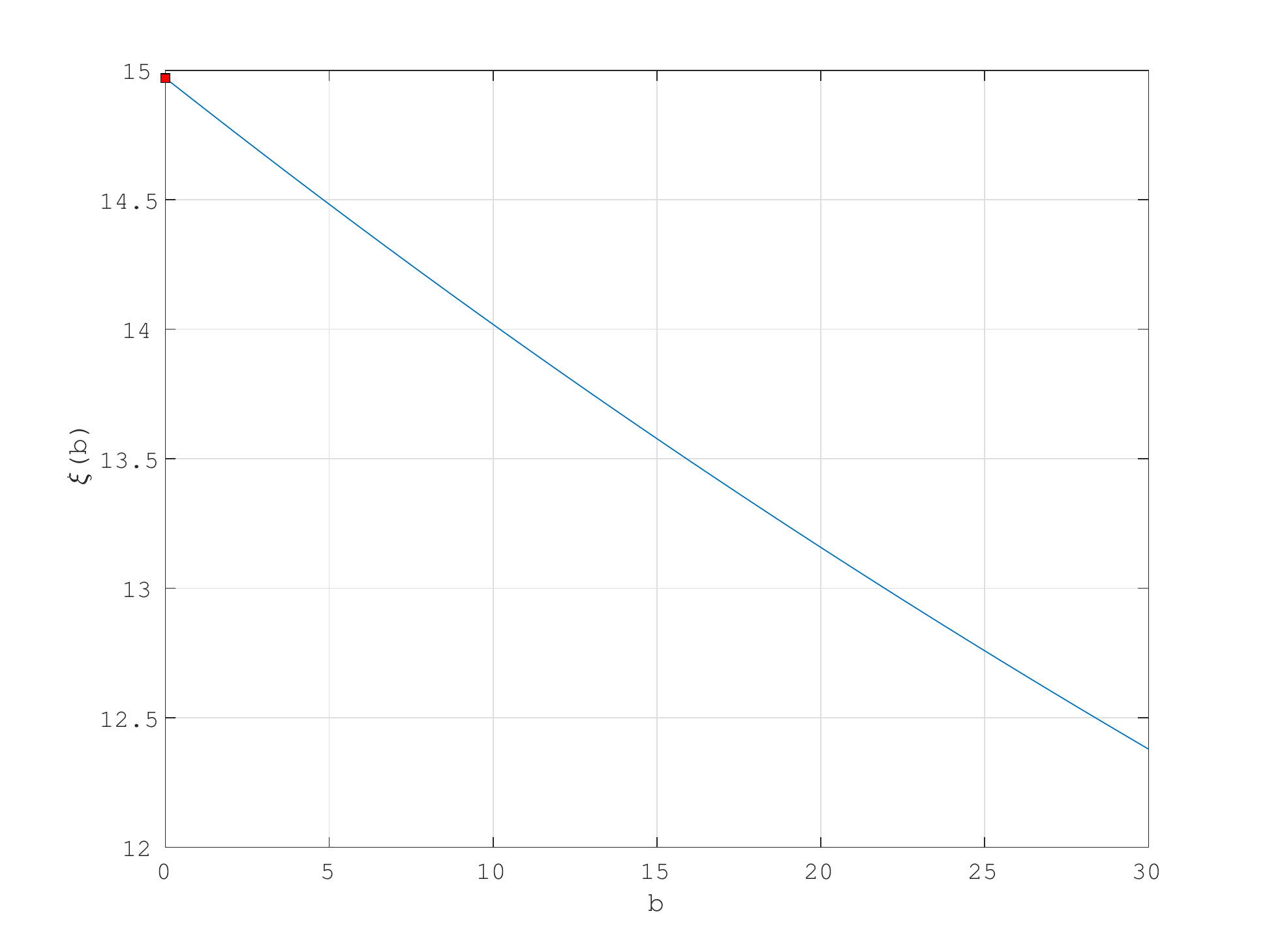}  
 \end{tabular}
\end{minipage}
\caption{Plots $b \mapsto \xi_\Lambda(b)$ for \textbf{Case 1} (left) and \textbf{Case 2} (right). The points at $b_\Lambda$ are indicated by squares.
} \label{figure_xi}
\end{center}
\end{figure}

\begin{figure}[htbp]
\begin{center}
\begin{minipage}{1.0\textwidth}
\centering
\begin{tabular}{ccc}
 \includegraphics[scale=0.35]{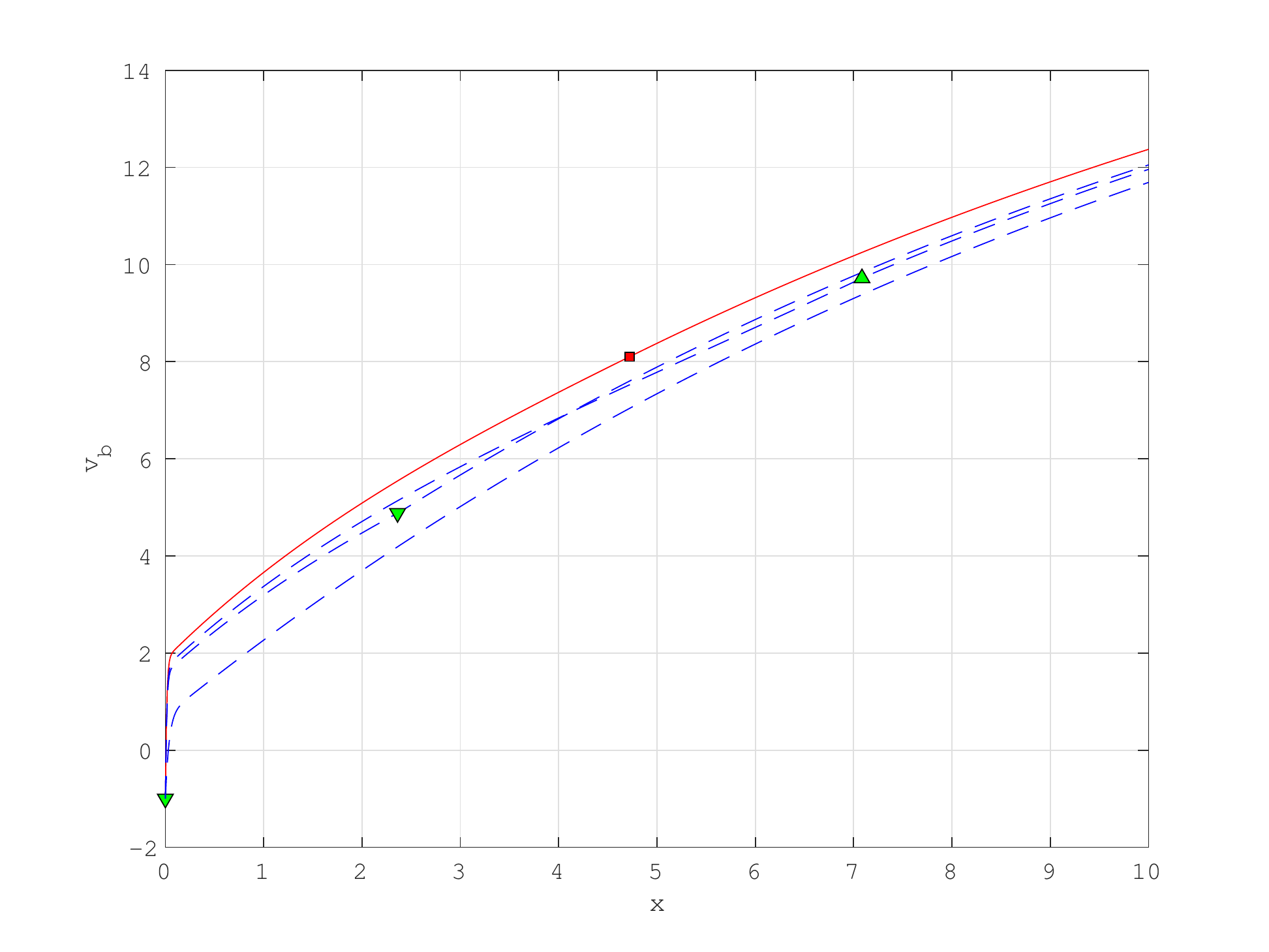} & \includegraphics[scale=0.35]{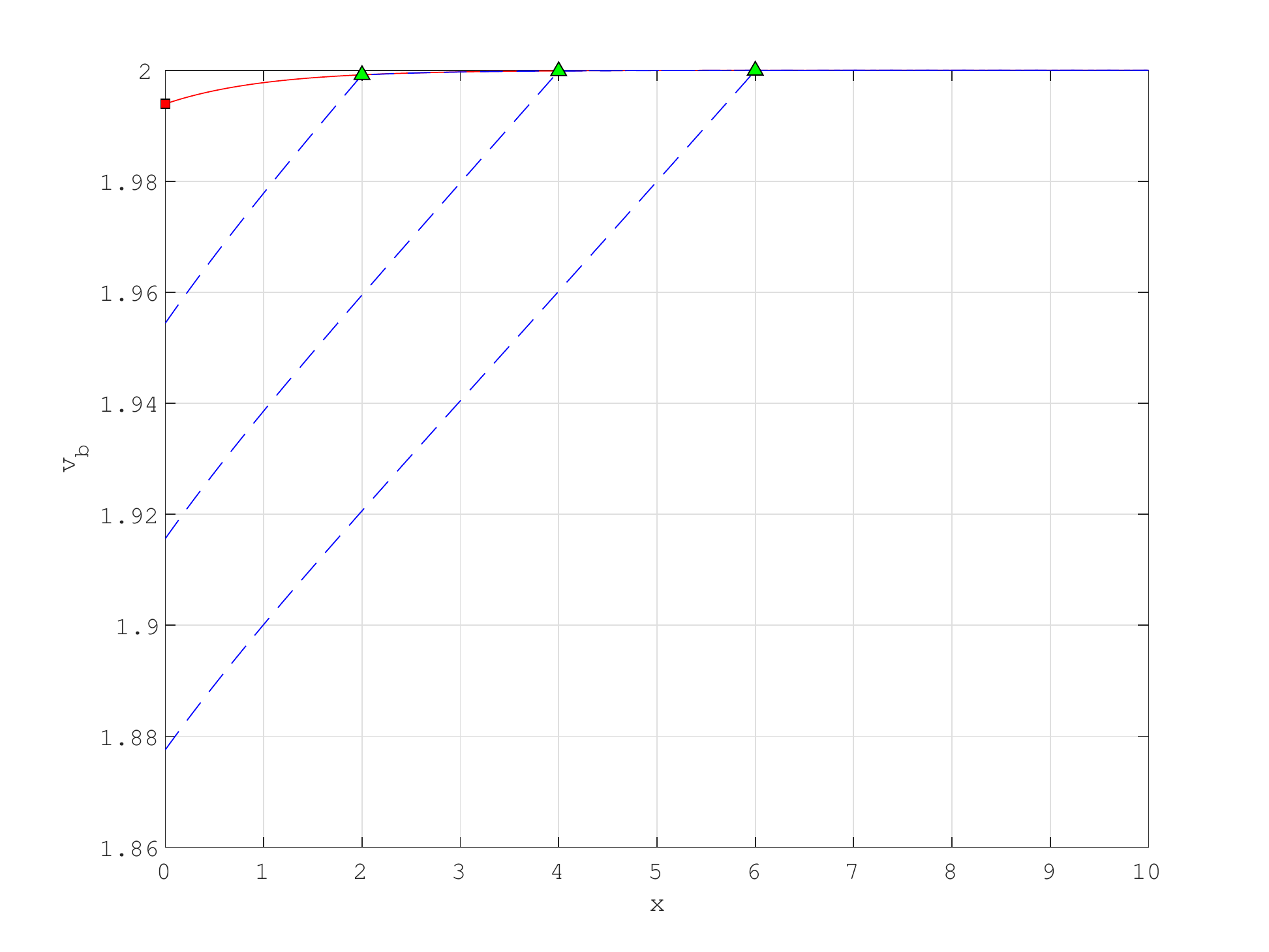}  
 \end{tabular}
\end{minipage}
\caption{Plots  of $x \mapsto V_\Lambda(x)$ (solid)  for \textbf{Case 1} (left) and \textbf{Case 2} (right). Suboptimal value functions $v_\Lambda^{b}$ (dotted) are also plotted 
for the choice of $b = 0,  \bar{b}_\Lambda/2, 3 \bar{b}_\Lambda/2$ for \textbf{Case 1} and $b = 2,4,6$ for \textbf{Case 2}. The points at $b_\Lambda$ are indicated by squares and those at $b$ in the suboptimal cases are indicated by up- (resp.\ down-) pointing triangles when $b > b_\Lambda$ (resp.\ $b < b_\Lambda$).
} \label{figure_optimality}
\end{center}
\end{figure}


We now move onto the constrained problem \eqref{F.1} studied in Section \ref{const.1}, focusing on \textbf{Case 1} with $K = 0.1$. Recall that the optimal solutions are given in Theorem \ref{main.1}. In the left panel of Figure  \ref{figure_Lagrange_2d}, we plot the function $x \mapsto V_\Lambda(x; K) = V_\Lambda(x) + \Lambda K$ for various values of $\Lambda$ ranging from $0$ to $20000$.  For $x \in (\underline{x}, \overline{x})$ where $\underline{x}$ and $\overline{x}$ are such that  $K_{\underline{x}} = K$ and $\Psi_{\overline{x}}(b_0) = K$, respectively, its minimum over the considered $\Lambda$ gives (an approximation of) $V(x; K)$, indicated by the solid red line in the plot.  On the other hand, $V(x;K)$ equals $V_0(x; K) = v_0^{b_0}(x;K)$ for $x \in [\overline{x},\infty)$ and it is infeasible for $x \in [0,\underline{x})$.  On the right panel of Figure  \ref{figure_Lagrange_2d}, we plot, for $x \in (\underline{x}, \overline{x})$, the Lagrange multiplier $\Lambda^* := \displaystyle\arg \min_{\Lambda \geq 0} V_\Lambda(x; K)$. We observe that $\Lambda^*$ goes to infinity as $x \downarrow \underline{x}$ and to zero as $x \uparrow \overline{x}$.

In Figure \ref{figure_Lagrange_3d}, we show the values of $V(x; K)$ and Lagrange multiplier $\Lambda^*$ as functions of $(x, K)$. Here, those $(x, K)$ at which the problem is infeasible are indicated by dark shades on the $z = 0$ plane. It is confirmed that $V(x; K)$ increases as $x$ and $K$ increase, while  $\Lambda^*$ increases as $(x, K)$ decrease.

\begin{figure}[htbp]
\begin{center}
\begin{minipage}{1.0\textwidth}
\centering
\begin{tabular}{ccc}
 \includegraphics[scale=0.35]{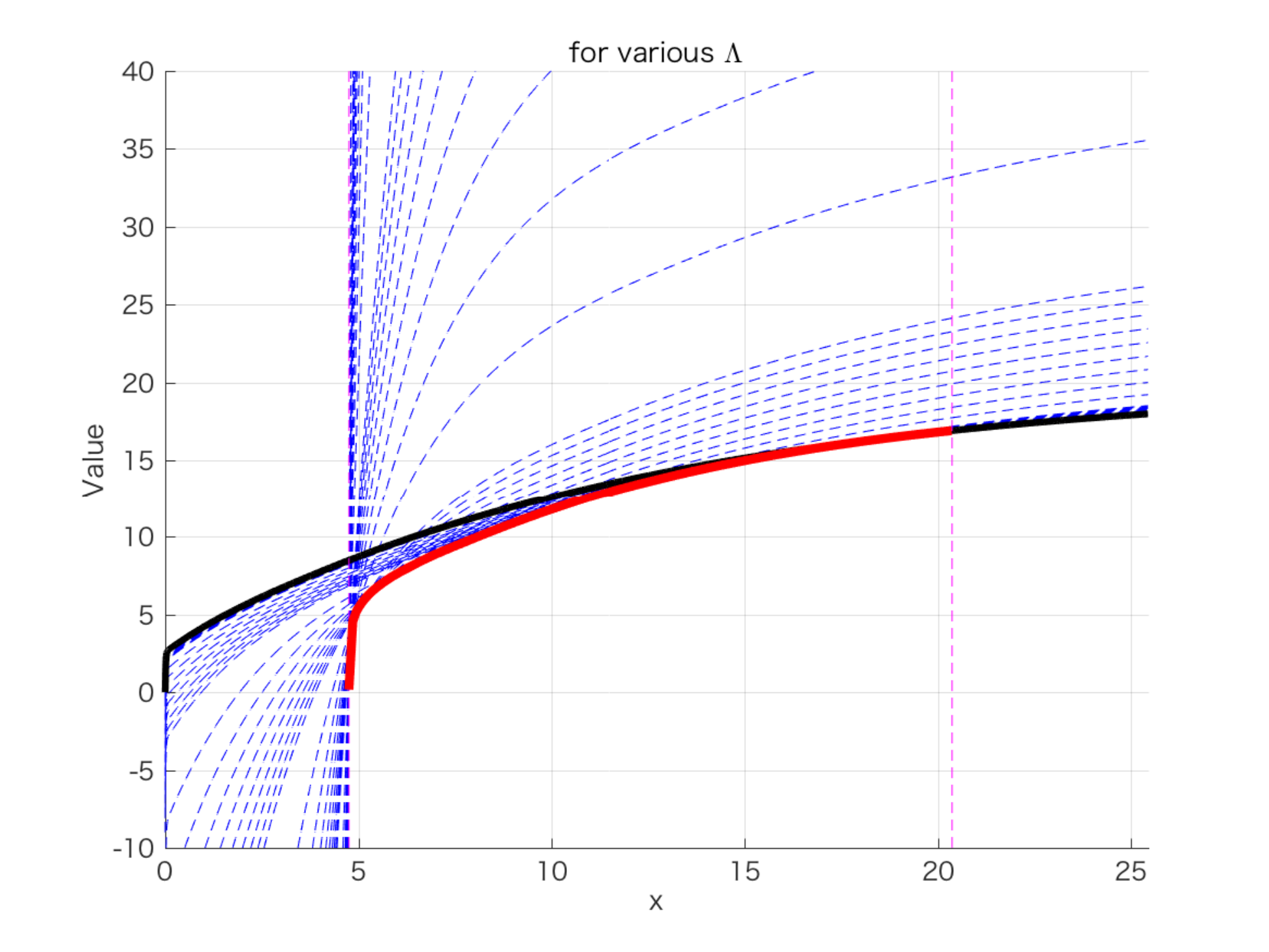} &\includegraphics[scale=0.35]{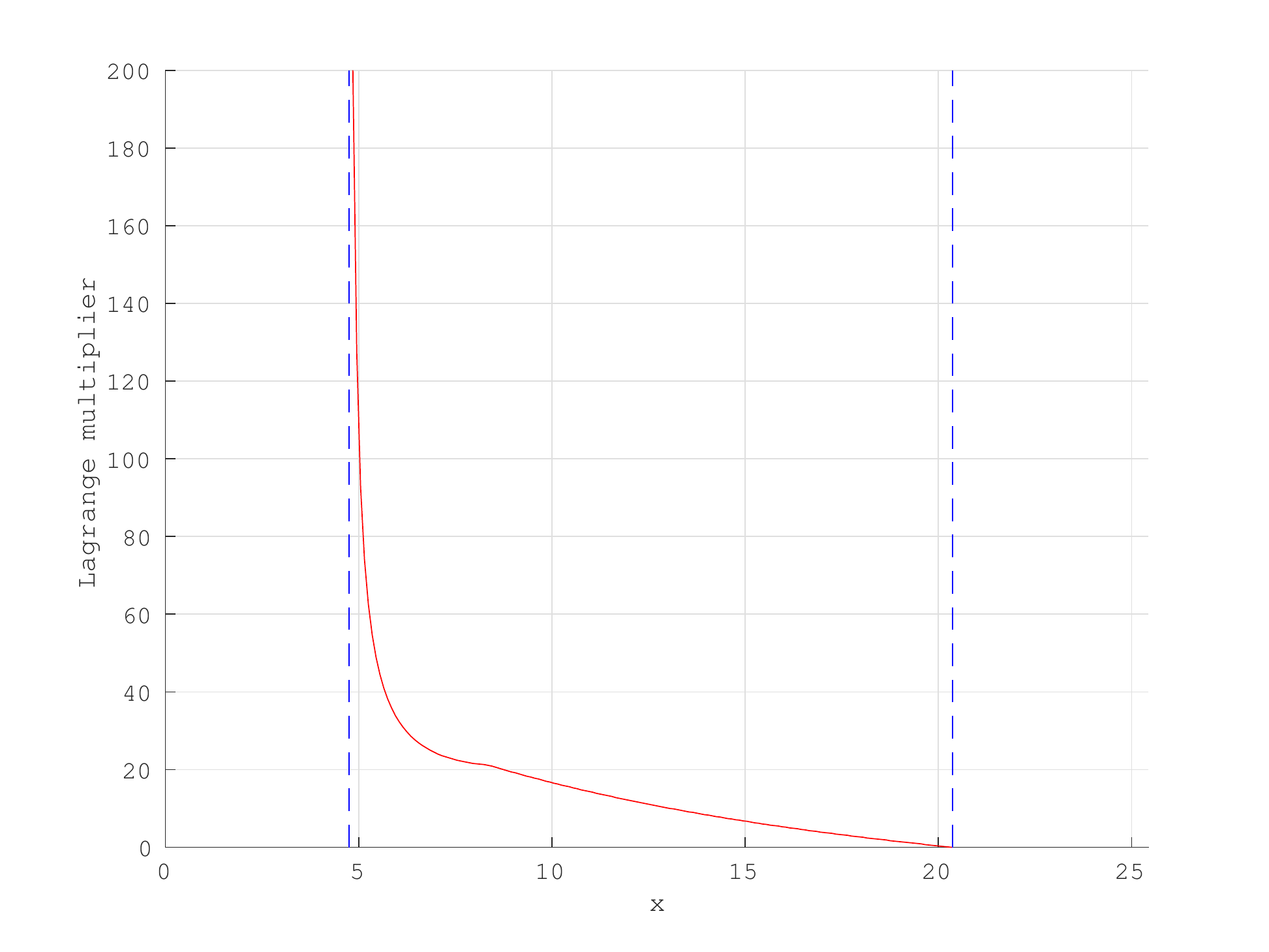}  \\
 \end{tabular}
\end{minipage}
\caption{(Left) Plots of $x \mapsto V_\Lambda(x; K)$ for $\Lambda = 0.1$, $\ldots$, $1$, $2$, $\ldots$, $10$, $20$, $\ldots$, $100$, $200$, $\ldots$, $1000$, $2000$, $\ldots$, $10000$, $20000$ (dotted) and for $\Lambda = 0$ (solid, bold face) for the case $K = 0.1$.  The two vertical dotted lines indicate the values of $\underline{x}$ and $\overline{x}$ such that  $K_{\underline{x}} = K$ and $\Psi_{\overline{x}}(b_0) = K$. On $[\underline{x}, \overline{x}]$, the minimum of $V_\Lambda(x; K)$ over $\Lambda$ is shown in solid fold-face red line. (Right) Plots of the Lagrange multiplier $\Lambda^*$ on $(\underline{x}, \overline{x}]$ with the same two vertical lines as in the left plot.
} \label{figure_Lagrange_2d}
\end{center}
\end{figure}

\begin{figure}[htbp]
\begin{center}
\begin{minipage}{1.0\textwidth}
\centering
\begin{tabular}{ccc}
 \includegraphics[scale=0.35]{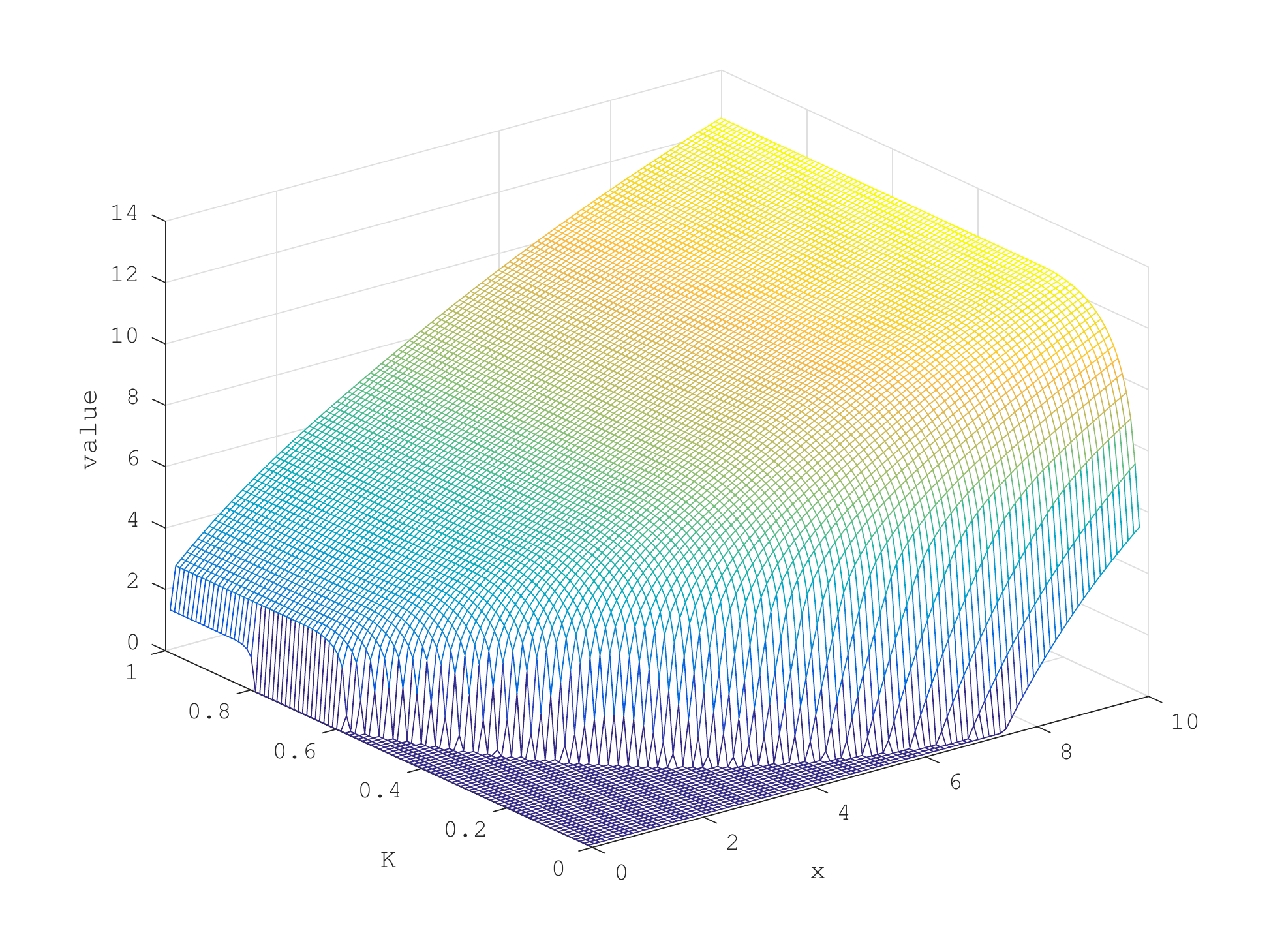} & \includegraphics[scale=0.35]{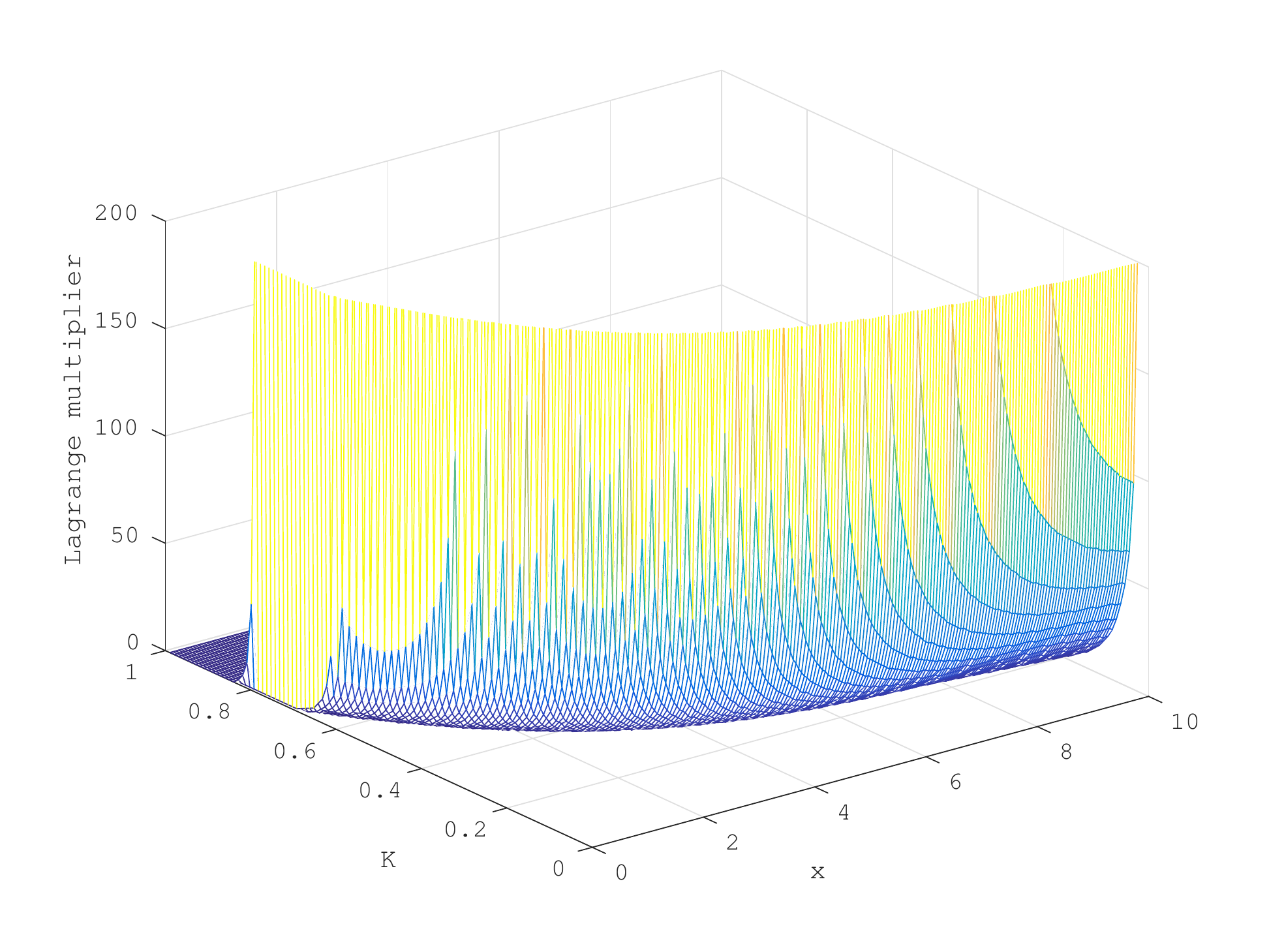}  \\
$(x,K) \mapsto V(x;K)$ & $(x,K) \mapsto \Lambda^*$ \\
 \end{tabular}
\end{minipage}
\caption{Plots of $V(x; K)$ (left) and the Lagrange multiplier $\Lambda^*$ (right) as functions of $x$ and $K$.
} \label{figure_Lagrange_3d}
\end{center}
\end{figure}


\subsection{Spectrally positive case}

Similarly, we confirm the results in Section \ref{specpos} focusing on the case  $\overline{Y}$ is of the form
$$\overline{Y}_t - \overline{Y}_0= - t+ 0.2 B_t + \displaystyle\sum_{n=1}^{N_t} Z_n,\quad \text{for}\ t\geq0. $$ Here $B$ and $N$ (with $\kappa = 1.5$) are the same as in the case of \eqref{X_phase_type}, and $Z$ is a phase-type random variable that approximates the Weibull distribution with shape parameter 2 and scale parameter 1 (see \cite{avanzi2017optimal} for the parameters of the phase-type distribution). Throughout, we set $\delta = 1$.

For the (Lagrangian) problem considered in Section \ref{problem_terminal_value_dual}, the optimal threshold $\bar{b}_\Lambda$ is such that \eqref{d.8.0} holds and the value function $\overline{V}_\Lambda(x) = \bar{v}_\Lambda^{\bar{b}_\Lambda}$ is given in \eqref{d.9}.
Figure \ref{figure_optimality_dual} plots the optimal value function $x \mapsto \overline{V}_\Lambda(x)$ along with suboptimal value functions $\bar{v}_\Lambda^{b}$ for the choice of $b = 0, \bar{b}_\Lambda/2, 3 \bar{b}_\Lambda/2$ when $\Lambda = 1$.  For the constrained case considered in Section \ref{section_constrained_dual},  in Figures \ref{figure_Lagrange_2d_dual} and \ref{figure_Lagrange_3d_dual}, we plot analogous results as  what are shown in Figures \ref{figure_Lagrange_2d} and \ref{figure_Lagrange_3d}, where we assume $K =0.1$ for Figure \ref{figure_Lagrange_2d_dual}. It is confirmed that similar behaviors of the value function and the Lagrange multiplier can be observed as in the spectrally negative case.

\begin{figure}[t]
\begin{center}
\begin{minipage}{1.0\textwidth}
\centering
\begin{tabular}{c}
 \includegraphics[scale=0.35]{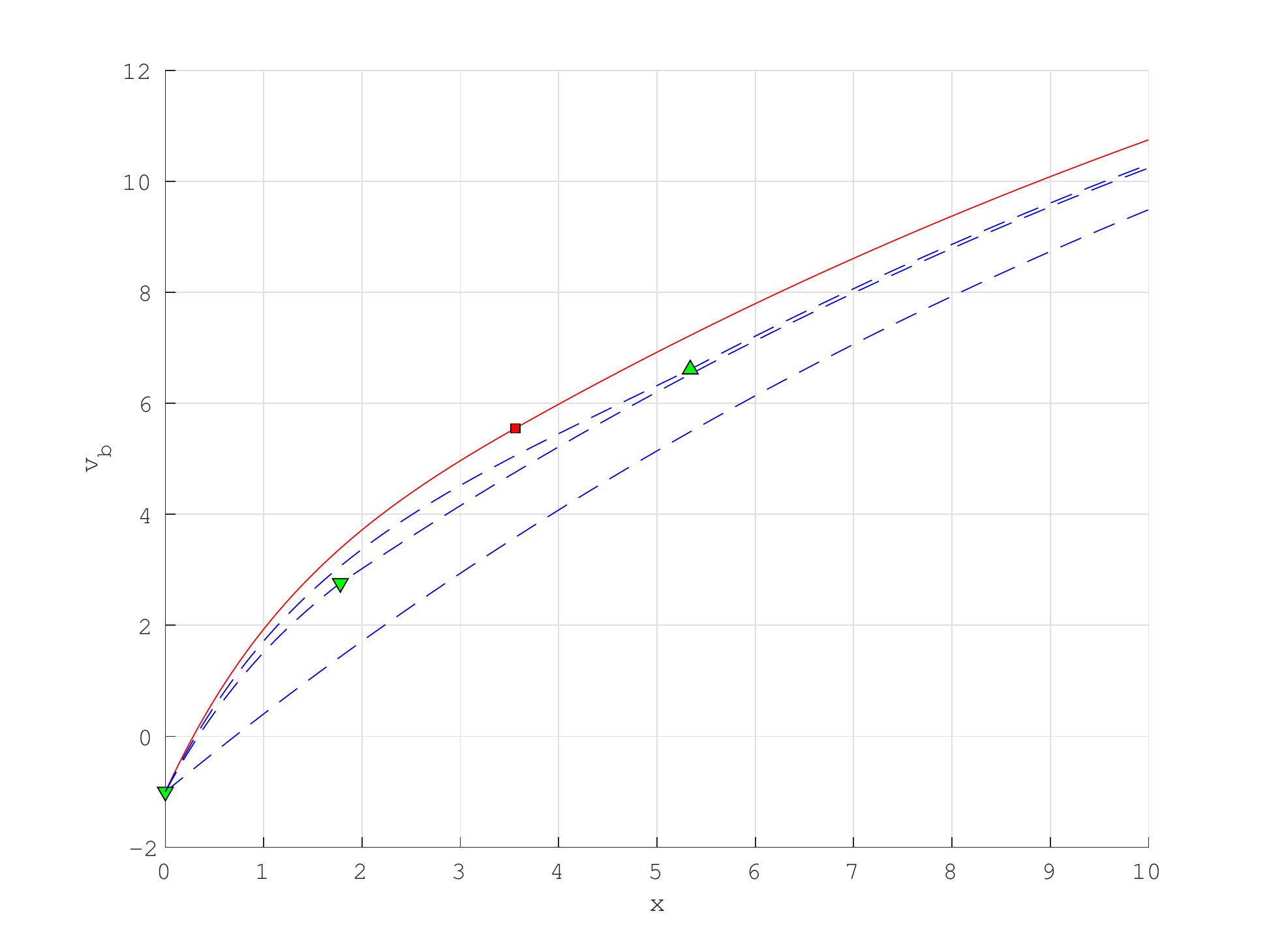} \end{tabular}
\end{minipage}
\caption{Plots  of $x \mapsto \overline{V}_\Lambda(x)$ along with suboptimal value functions $\bar{v}_\Lambda^{b}$ (dotted) 
for the choice of $b = 0,  \bar{b}_\Lambda/2, 3 \bar{b}_\Lambda/2$. The point at $\bar{b}_\Lambda$ is indicated by a square and those at $b$ in the suboptimal cases are indicated by up- (resp.\ down-) pointing triangles when $b > \bar{b}_\Lambda$ (resp.\ $b < \bar{b}_\Lambda$).
} \label{figure_optimality_dual}
\end{center}
\end{figure}

\begin{figure}[htbp]
\begin{center}
\begin{minipage}{1.0\textwidth}
\centering
\begin{tabular}{ccc}
 \includegraphics[scale=0.35]{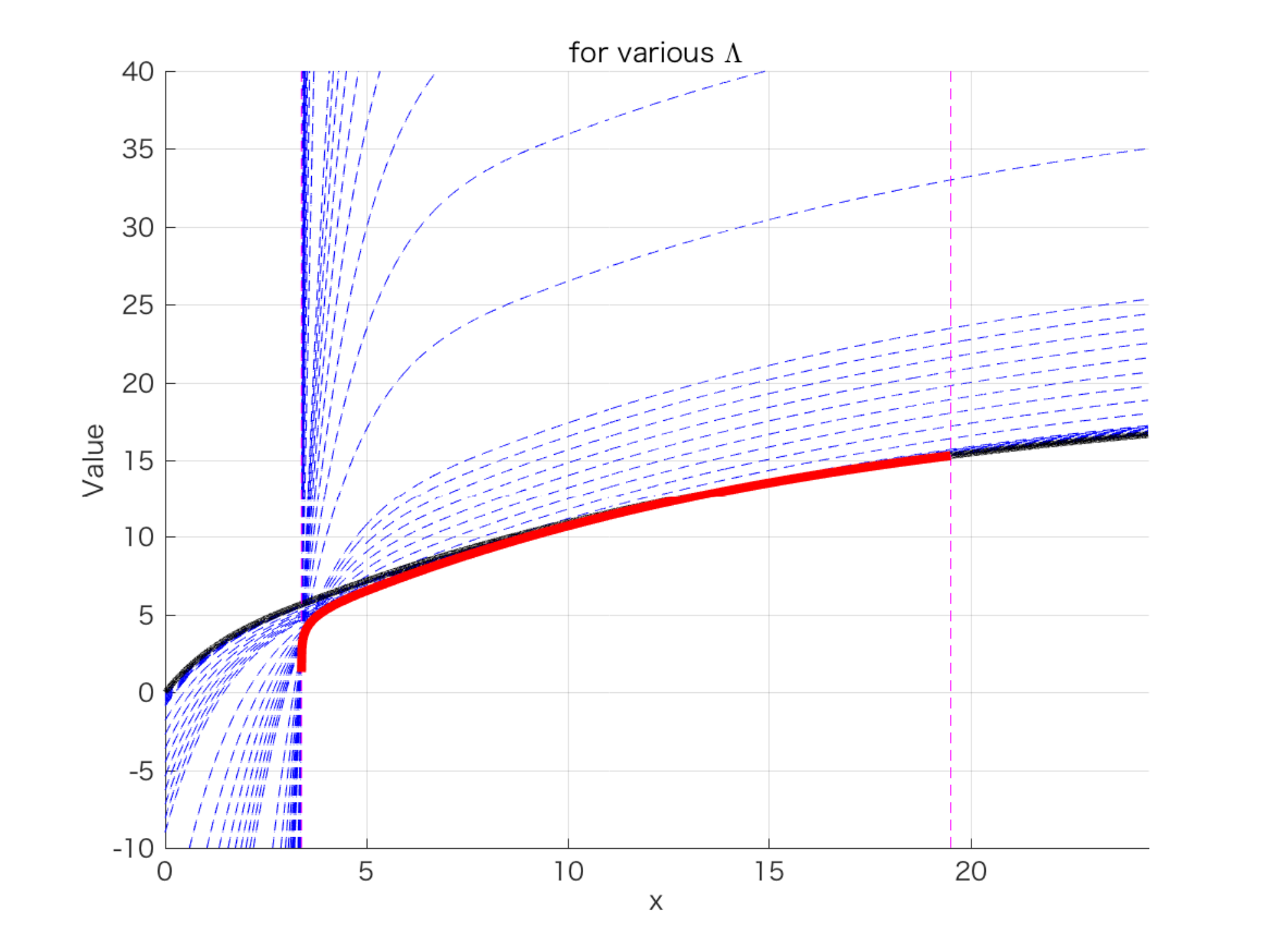} &\includegraphics[scale=0.35]{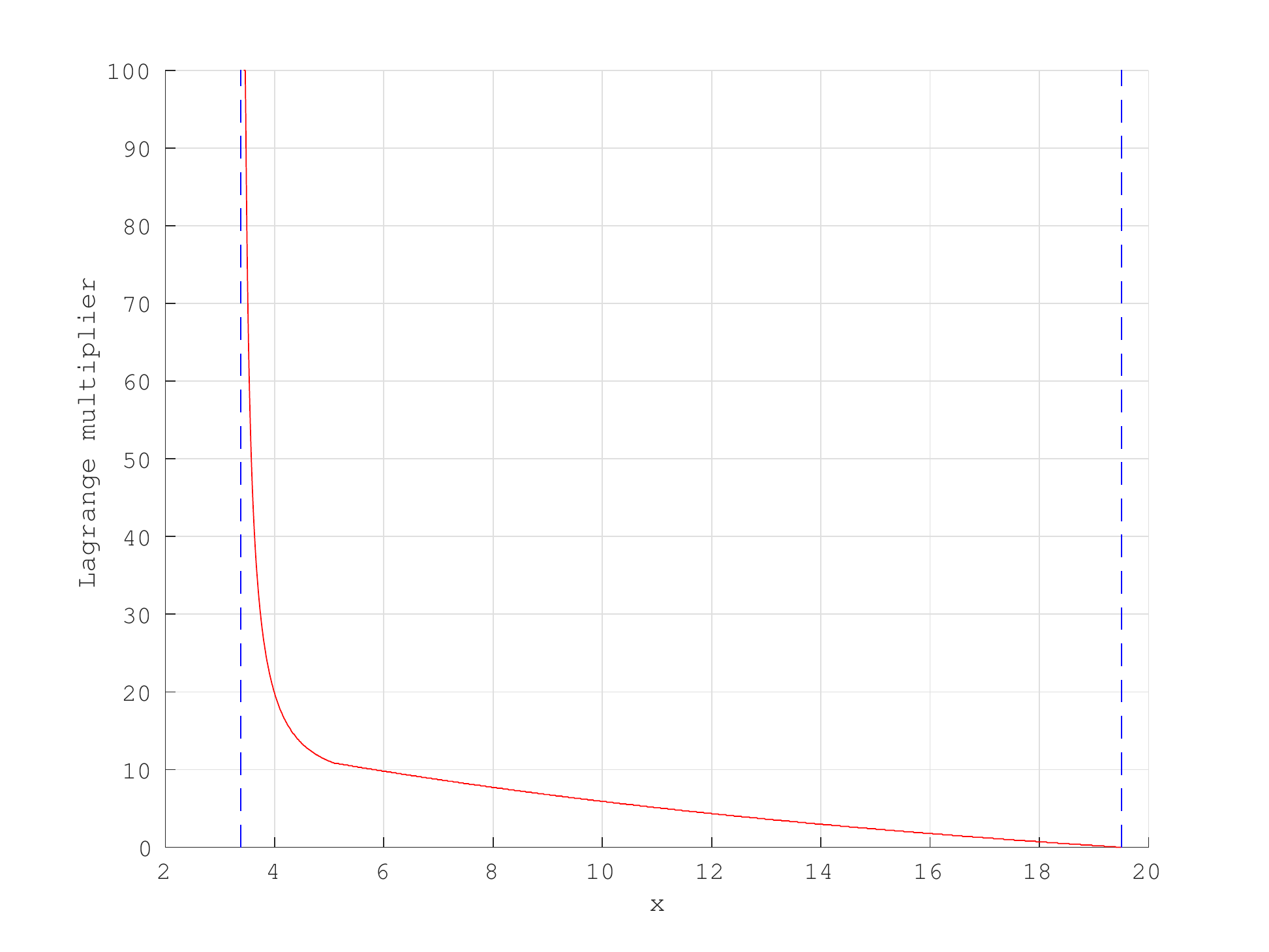}  \\
 \end{tabular}
\end{minipage}
\caption{(Left) Plots of $x \mapsto \overline{V}_\Lambda(x; K)$ for $\Lambda = 0.1$, $\ldots$, $1$, $2$, $\ldots$, $10$, $20$, $\ldots$, $100$, $200$, $\ldots$, $1000$, $2000$, $\ldots$, $10000$, $20000$ (dotted) and for $\Lambda = 0$ (solid, bold face) for the case $K = 0.1$.  The two vertical dotted lines indicate the values of $\underline{x}$ and $\overline{x}$ such that  $\exp (- \varphi(q) \underline{x})= K$ and $\overline{\Psi}_{\overline{x}}(\bar{b}_0) = K$. On $[\underline{x}, \overline{x}]$, the minimum of $\overline{V}_\Lambda(x; K)$ over $\Lambda$ is shown by the solid fold-face red line. (Right) Plots of the Lagrange multiplier $\Lambda^*$ on $(\underline{x}, \overline{x}]$ with the same two vertical lines as in the left plot.
} \label{figure_Lagrange_2d_dual}
\end{center}
\end{figure}

\begin{figure}[htbp]
\begin{center}
\begin{minipage}{1.0\textwidth}
\centering
\begin{tabular}{ccc}
 \includegraphics[scale=0.35]{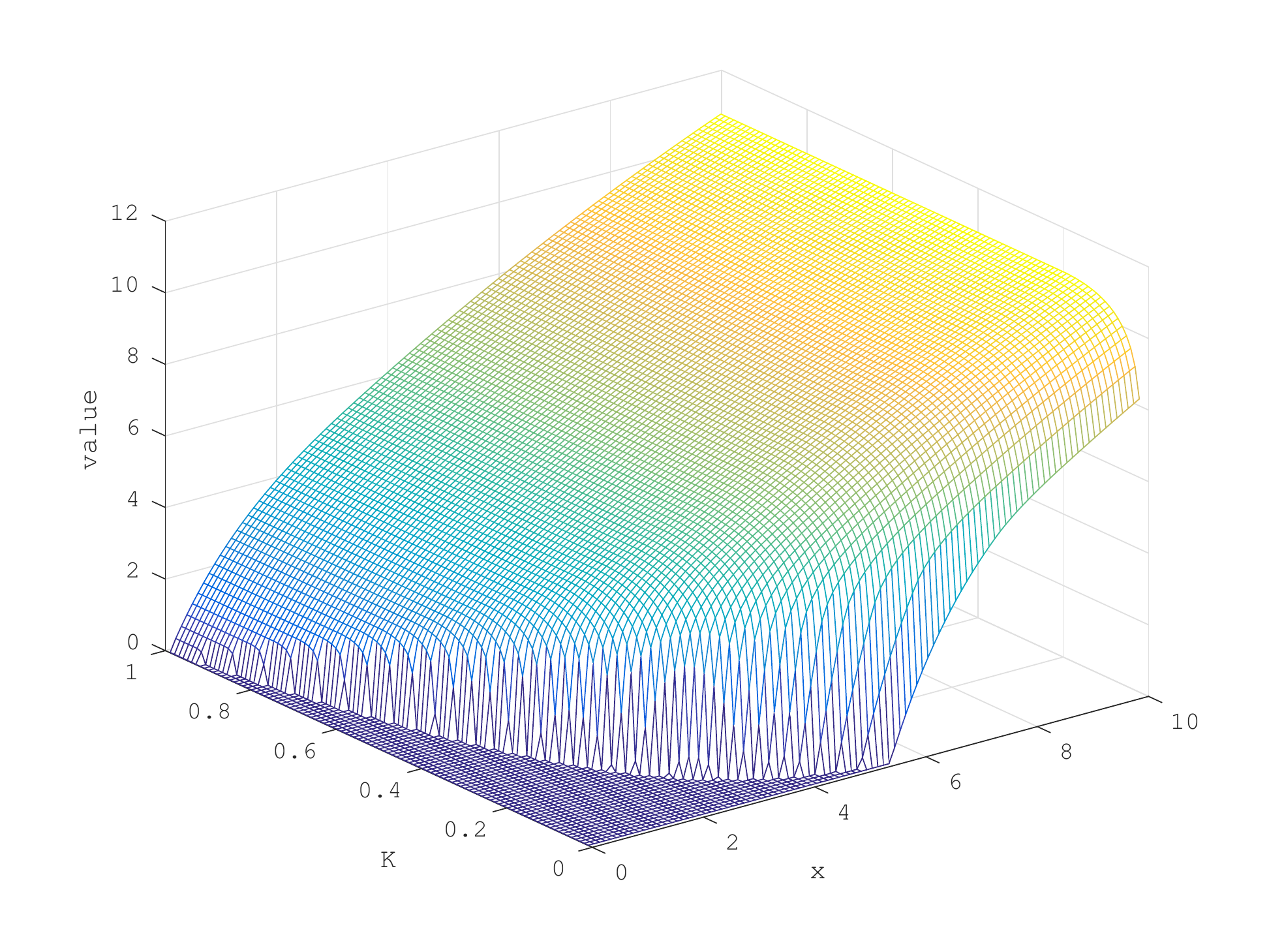} & \includegraphics[scale=0.35]{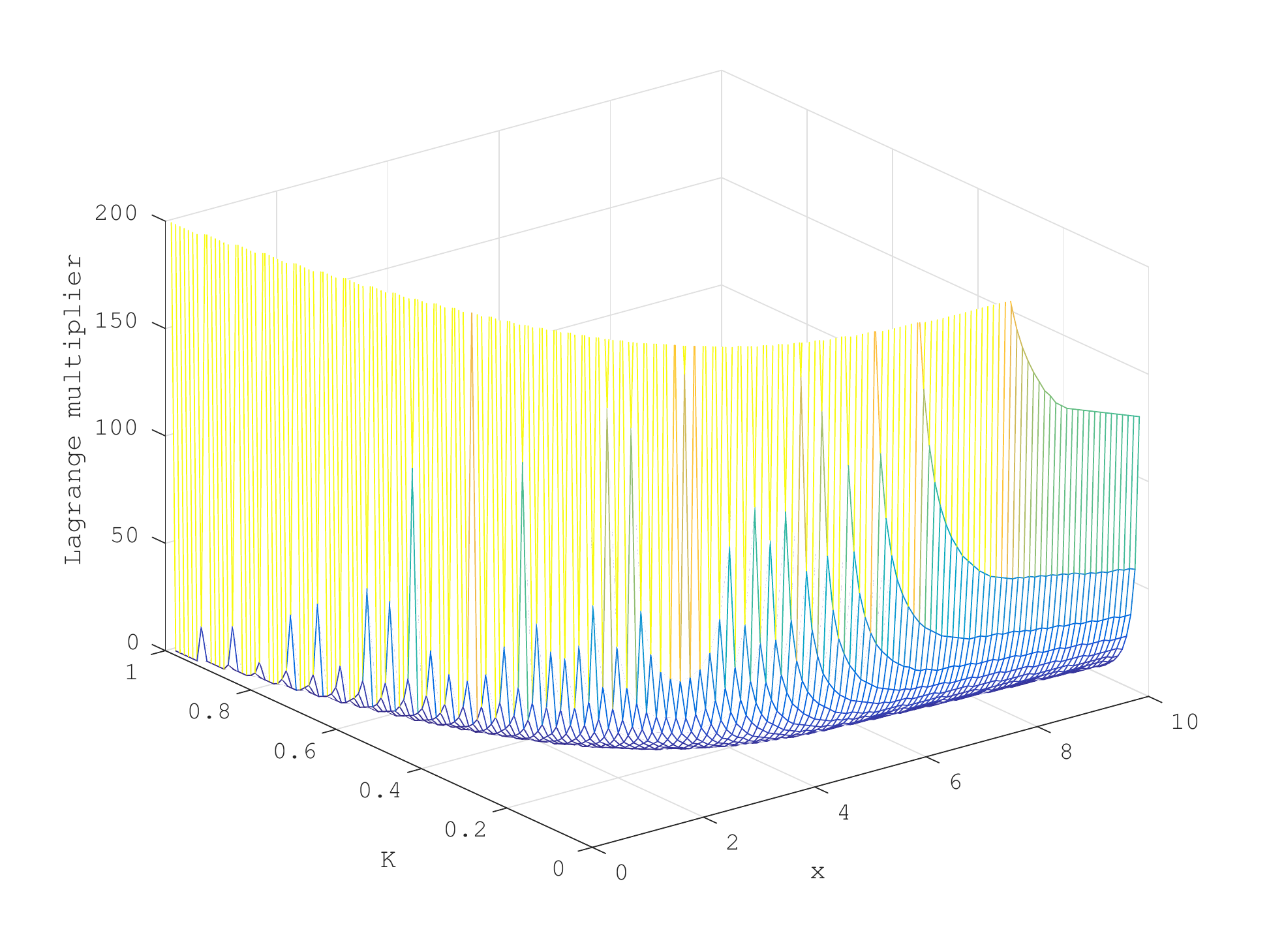}  \\
$(x,K) \mapsto \overline{V}(x;K)$ & $(x,K) \mapsto \Lambda^*$ \\
 \end{tabular}
\end{minipage}
\caption{Plots of $\overline{V}(x; K)$ (left) and the Lagrange multiplier $\Lambda^*$ (right) as functions of $x$ and $K$.
} \label{figure_Lagrange_3d_dual}
\end{center}
\end{figure}

\appendix
\section{Proofs of auxiliary lemmas}

\subsection{Proof Lemma \ref{lemma_zero_case}} \label{proof_lemma_zero_case}
By \eqref{lap_exp_Y}--\eqref{def_varphi}, we have 
\begin{align}
\int_0^\infty e^{-\varphi(q) y} W^{(q)}(y) \diff y = (\delta \varphi(q))^{-1}. \label{laplace_W_varphi}
\end{align}
 Using this and integration by parts,
\begin{align}
\int_0^{\infty} W^{(q) \prime}(y) e^{-\varphi(q) y} \ud y = 
	- W^{(q)}(0) + \varphi(q)\int_0^{\infty} W^{(q)}(y) e^{-\varphi(q) y} \ud y =  - W^{(q)}(0)  + \delta^{-1}. \label{laplace_W_varphi_derivative}
\end{align}
From here, \eqref{xi_0_new} is immediate.

Now, by differentiating \eqref{RLqp} and changing variables,  
\begin{align*}
	\mathbb{W}^{(q)}(x) - W^{(q)}(x) 
	&= \delta \Big( \int_0^x \mathbb{W}^{(q)}(x-y) W^{(q) \prime}(y) \diff y + \mathbb{W}^{(q)}(x) W^{(q)}(0)  \Big).
\end{align*}
This implies
\begin{align} \label{conv_W_W_prime}
	\delta\int_0^x \mathbb{W}^{(q)}(x-y) W^{(q) \prime}(y) \diff y  =  \mathbb{W}^{(q)}(x) - W^{(q)}(x) -  \delta\mathbb{W}^{(q)}(x) W^{(q)}(0). 
\end{align}
Substituting \eqref{RLqp} and \eqref{conv_W_W_prime} in \eqref{vf_1} and after simplification, we obtain \eqref{v_0_new}.

\subsection{Proof of Lemma \ref{com_mon}.} \label{proof_lemma_com_mon}
By integration by parts applied to \eqref{conv_W_W_prime}
\begin{align}
\delta\int_{0}^x\mathbb{W}^{(q)\prime}(x-y)W^{(q)}(y)\diff y=\mathbb{W}^{(q)}(x)-(1+\delta\mathbb{W}^{(q)}(0))W^{(q)}(x). \label{v.2}
\end{align}
On the other hand, by differentiating \eqref{conv_W_W_prime}, we obtain
\begin{align}
\delta\int_{0}^x\mathbb{W}^{(q)\prime}(x-y)W^{(q)\prime}(y)\diff y =(1-\delta W^{(q)}(0))\mathbb{W}^{(q)\prime}(x)-(1+\delta\mathbb{W}^{(q)}(0))W^{(q)\prime}(x).\label{v.3} 
\end{align}		
Applying \eqref{v.2} and \eqref{v.3} in the first equality in \eqref{firstder} it follows that,  for $x > b_\Lambda$,
\begin{align*}
v^{b_{\Lambda}\prime}_{\Lambda}(x)&=g_{\Lambda}(b_{\Lambda})\left((1-\delta W^{(q)}(0))\mathbb{W}^{(q)\prime}(x)-\delta\int_0^{b_{\Lambda}}\mathbb{W}^{(q)\prime}(x-y)W^{(q)\prime}(y)\diff y\right)\notag\\
&\quad-q\Lambda\left(\mathbb{W}^{(q)}(x)-\delta\int_0^{b_{\Lambda}}\mathbb{W}^{(q)\prime}(x-y)W^{(q)}(y)\diff y\right)-\delta\mathbb{W}^{(q)}(x-b_{\Lambda}).
\end{align*}
By \eqref{W_mathbb_completely_monotone}, we can write 
\begin{align*}
v^{b_{\Lambda}\prime}_{\Lambda}(x) = G_1(x) + G_2(x),
\end{align*}
where
\begin{align*}
G_1(x)&:=g_{\Lambda}(b_{\Lambda})\varphi'(q)\varphi(q)e^{\varphi(q)x}\left((1-\delta W^{(q)}(0))-\delta\int_0^{b_{\Lambda}}e^{-\varphi(q)y}W^{(q)\prime}(y)\diff y\right)\\
&\quad-q\Lambda\varphi'(q) e^{\varphi(q)x}\left(1-\delta\varphi(q)\int_0^{b_{\Lambda}}e^{-\varphi(q)y}W^{(q)}(y)\diff y\right)-\varphi'(q)\delta e^{\varphi(q)(x-b_{\Lambda})},\\
G_2(x) &:=-g_{\Lambda}(b_{\Lambda})\left((1-\delta W^{(q)}(0))\hat{f}'(x)-\delta\int_0^{b_{\Lambda}}\hat{f}'(x-y)W^{(q)\prime}(y)\diff y\right)\\
&\quad+q\Lambda\left(\hat{f}(x)-\delta\int_0^{b_{\Lambda}}\hat{f}'(x-y)W^{(q)}(y)\diff y\right)+\delta \hat{f}(x-b_{\Lambda}),
\end{align*}

with $\hat{f}$  as in Remark \ref{com_mon_forW}. Now we note that,  by \eqref{laplace_W_varphi} and \eqref{laplace_W_varphi_derivative},
\begin{align}\label{xi_lambda_2}
\begin{split}
\int_0^{b_{\Lambda}}e^{-\varphi(q)y}W^{(q)\prime}(y)\diff y&=\frac{1}{\delta}-W^{(q)}(0) -\int_{b_{\Lambda}}^{\infty}e^{-\varphi(q)y}W^{(q)\prime}(y)\diff y, \\
\int_0^{b_{\Lambda}}e^{-\varphi(q)y}W^{(q)}(y)\diff y&=\frac{1}{\delta\varphi(q)} -\int_{b_{\Lambda}}^{\infty}e^{-\varphi(q)y}W^{(q)}(y)\diff y.
\end{split}
\end{align}
Combining these, we obtain
\begin{align*}
G_1(x) &=g_{\Lambda}(b_{\Lambda})\varphi'(q)\varphi(q)e^{\varphi(q)x}\delta\int_{b_{\Lambda}}^{\infty}e^{-\varphi(q)y}W^{(q)\prime}(y)\diff y\\
&\quad-q\Lambda\varphi'(q) e^{\varphi(q)x}\delta\varphi(q)\int_{b_{\Lambda}}^{\infty}e^{-\varphi(q)y}W^{(q)}(y)\diff y-\varphi'(q)\delta e^{\varphi(q)(x-b_{\Lambda})}\\
&=\varphi'(q)e^{\varphi(q)(x-b_{\Lambda})}\delta\left(\xi_{\Lambda}(b_{\Lambda})h(b_{\Lambda})-q\Lambda\varphi(q)e^{\varphi(q)b_{\Lambda}}\int_{b_{\Lambda}}^{\infty}e^{-\varphi(q)y}W^{(q)}(y)\diff y\right)\\
&\quad-\varphi'(q)\delta e^{\varphi(q)(x-b_{\Lambda})}=0,
\end{align*}
where in the first equality we used \eqref{xi_lambda_2} and the last one follows from \eqref{xi_lambda}. Hence $v^{b_{\Lambda}\prime}_{\Lambda}(x) = G_2(x)$.
Now, using that $\hat{f}'(x)=- \displaystyle\int_{0+}^{\infty} t e^{-xt}\hat{\mu}(\diff t)$ and Tonelli's theorem, we have \eqref{form_cmf}.
\subsection{Proof of Lemma \ref{decPsi1}.}\label{appendix_decPsc1} 

{(1) We have, by \eqref{h_diff}, 
\begin{align}
\dfrac{\diff}{\diff b} \frac{W^{(q)}(b)}{h(b)} 
= \frac {\alpha(b)} {[h(b)]^2}, \quad b > 0, \label{derivative_frac_W_h}
\end{align}
with 
\begin{align*}
\alpha(b) := W^{(q)\prime}(b) h(b) - W^{(q)}(b) \varphi(q)(h(b)-W^{(q)\prime}(b)) > 0, \quad b > 0,
\end{align*}
where the positivity holds by \eqref{l.5}.

If $x\leq b$, we have that,  by  \eqref{lap_ac.1},
\begin{equation}\label{L.11}
\Psi_{x}(b)=Z^{(q)}(x)-\dfrac{q(W^{(q)}(b)+h(b)/\varphi(q))}{h(b)}W^{(q)}(x). 
\end{equation}
Taking derivative with respect to $b$, by \eqref{derivative_frac_W_h} and the positivity of $\alpha$,
\begin{align*}
	\frac{\diff\Psi_{x}(b)
	}{\diff b} 
	&=-\dfrac{qW^{(q)}(x)}{[h(b)]^{2}}  \alpha(b) < 0.
\end{align*}
Therefore, $\Psi_{x}$ is strictly decreasing on $[x,\infty)$. 

Suppose $b<x$. By \eqref{derivative_frac_W_h}  and \eqref{lap_ac.1},
\begin{align}\label{L.8}
	\frac{\diff\Psi_{x}(b)
	}{\diff b}&=-\delta q \mathbb{W}^{(q)}(x-b)W^{(q)}(b)\notag\\
	&\quad+\frac{q\delta\mathbb{W}^{(q)}(x-b)W^{(q)\prime}(b)(W^{(q)}(b)+h(b)/\varphi(q))}{h(b)}\notag\\
	&\quad-q\left(W^{(q)}(x)+\delta\int_b^x\mathbb{W}^{(q)}(x-y)W^{(q)\prime}(y)\diff y\right)\dfrac{\diff}{\diff b}\biggr[\frac{W^{(q)}(b)+h(b)/\varphi(q)}{h(b)}\biggl]\notag\\
	&=\dfrac{q}{[h(b)]^{2}}
	r(b;x) \alpha(b),
\end{align}
where for $x,b\geq0$ 
\begin{equation*}
	r(b;x):=\dfrac{\delta\mathbb{W}^{(q)}(x-b)h(b)}{\varphi(q)}-W^{(q)}(x)-\delta\int_{b}^{x}\mathbb{W}(x-y)W^{(q)\prime}(y)\diff y. 
\end{equation*}
To prove that $\diff\Psi_{x} / {\diff b}<0$ on $(0,x)$, by the positivity of $\alpha$,  we only need to verify that $r(b;x) < 0$ for all $b\in(0,x)$.

Note that, by \eqref{int_by_parts_laplace},  
\begin{align*}
	\dfrac{\delta\mathbb{W}^{(q)}(x-b)h(b)}{\varphi(q)}=\delta\mathbb{W}^{(q)}(x-b)\biggr(\varphi(q)e^{\varphi(q)b}\int_{b}^{\infty}e^{-\varphi(q)y}W^{(q)}(y)\diff y-W^{(q)}(b)\biggl).
\end{align*}
Using  integration by parts and \eqref{v.2},
\begin{align*}
	-\delta\int_{b}^{x}\mathbb{W}^{(q)}(x-y)W^{(q)\prime}(y)\diff y&=\delta\mathbb{W}^{(q)}(x-b)W^{(q)}(b)-\mathbb{W}^{(q)}(x)\notag\\
	&\quad+W^{(q)}(x)+\delta\int_{0}^{b}\mathbb{W}^{(q)\prime}(x-y)W^{(q)}(y)\diff y.
\end{align*}
Substituting these,
\begin{align*}
	r(b;x)&=-\mathbb{W}^{(q)}(x)+\delta\int_{0}^{b}\mathbb{W}^{(q)\prime}(x-y)W^{(q)}(y)\diff y+\delta\varphi(q)e^{\varphi(q)b}\mathbb{W}^{(q)}(x-b)\int_{b}^{\infty}e^{-\varphi(q)y}W^{(q)}(y)\diff y.
\end{align*}
Now, we rewrite this using Lemma \ref{compmon}. By observing that the terms corresponding to $e^{\varphi(q)x}$ all cancel out (using that $\int_0^\infty e^{-\varphi(q)y} W^{(q)}(y) \diff y = (\delta \varphi(q))^{-1}$), it follows that
\begin{align}\label{L.9}
	r(b;x)&=\hat{f}(x)-\delta\int_{0}^{b}\hat{f}^{'}(x-y)W^{(q)}(y)\diff y-\delta\varphi(q) e^{\varphi(q)b}\hat{f}(x-b)\int_{b}^{\infty}e^{-\varphi(q)y}W^{(q)}(y)\diff y\notag\\
	&=\int_{0}^{\infty}e^{-xt}\biggr(1+\delta t\int_{0}^{b}e^{yt}W^{(q)}(y)\diff y-\delta \varphi(q)e^{b(t+\varphi(q))}\int_{b}^{\infty}e^{-\varphi(q)y}W^{(q)}(y)\diff y\biggl)\hat{\mu}(\diff t).
\end{align}
Taking derivative with respect to $b$ in \eqref{L.9}, it follows that 
\begin{align*}
	\dfrac{\partial r(b;x)}{\partial b}&=\int_{0}^{\infty}e^{-xt}\biggr(\delta (t+\varphi(q))e^{bt}W^{(q)}(b)\notag\\
	&\quad-\delta \varphi(q)(t+\varphi(q))e^{b(t+\varphi(q))}\int_{b}^{\infty}e^{-\varphi(q)y}W^{(q)}(y)\diff y\biggl)\hat{\mu}(\diff t)\notag\\
	&<\int_{0}^{\infty}e^{-xt}\biggr(\delta (t+\varphi(q))e^{bt}W^{(q)}(b)\notag\\
	&\quad-\delta \varphi(q)(t+\varphi(q))e^{b(t+\varphi(q))}W^{(q)}(b)\int_{b}^{\infty}e^{-\varphi(q)y}\diff y\biggl)\hat{\mu}(\diff t)=0,
\end{align*}
where the inequality follows since $W^{(q)}$ is strictly increasing on $(0,\infty)$. From here we have that $r(b;x)$ is strictly decreasing on $(0,x)$. 

On the other hand, by \eqref{L.9} we get that $\displaystyle\lim_{b\rightarrow0}r(b;x)=0$,  and hence  $ r(b;x)<0$ on $(0, x)$. Now we conclude by \eqref{L.8} that $b \mapsto \Psi_{x}(b)$ is also strictly decreasing on $(0,x)$. 

(2) By \eqref{v8}, we see that
\begin{equation}\label{L.13}
\lim_{b\rightarrow\infty}\dfrac{W^{(q)}(b)+h(b)/\varphi(q)}{h(b)}=\dfrac{1}{\Phi(q)}.
\end{equation}
Then, letting $b\rightarrow\infty$ in \eqref{L.11} and using \eqref{L.13}, we obtain the expression in \eqref{L.14}.	
	
\subsection{Proof of Proposition \ref{r.1}.} \label{proof_lemma_r.1}

	Consider $U^{-b}$ the (spectrally negative) refracted \lev process with refraction level $-b$, driven by the process $X$ (as in Section \ref{formProblem}), and $\eta_{-b}:=\inf\{t>0: U^{-b}_{t} > 0 \}$. Then,
\begin{align*}
	\overline{\Psi}_{x}(b)&=\E_{-x}\left[e^{-q\eta_{-b}};\eta_{-b}<\infty\right],\notag\\
	\overline{\E}_{x}\biggr[\displaystyle \int_0^{\overline{\tau}_{b}} e^{-qt}\diff \overline{D}^{b}_{t}\biggl]
	&=\delta\biggr(\dfrac{1}{q}(1-\overline{\Psi}_{x}(b))- \E_{-x}\bigg[ \int_0^{\eta_{-b}} e^{-qt}1_{\{U^{-b}_t  >-b\}}\diff t\bigg]\bigg).
\end{align*}

Now from Theorem 5.(i) of \cite{KyLo}, we have \eqref{d.1}.

On the other hand, by Theorem 6.(iii) in \cite{KyLo}, we obtain
\begin{align*}
	 \E_{-x}\bigg[\int_0^{\eta_{-b}} e^{-qt}1_{\{U^{-b}_t >-b\}}\diff t\bigg]=\Psi_x(b)\overline{\mathbb{W}}^{(q)}(b)-\overline{\mathbb{W}}^{(q)}(b-x).
\end{align*}
Hence, putting the pieces together we get \eqref{d.2}.
\subsection{Proof of Lemma \ref{mon_psi_SP}.} \label{proof_mon_psi_SP}
For $b \in (0, x)$, we have $\overline{\Psi}_{x}(b)=e^{-\Phi(q)x}/{\mathbb{Z}^{(\delta\Phi(q))}_{\Phi(q)}(b)}$, which is clearly strictly decreasing.

To see that it is strictly decreasing on $[x,\infty)$, we need to show that $\dfrac{\diff\overline{\Psi}_{x}(b)}{\diff b}<0$ on $(x,\infty)$. This is satisfied if we can show
	\begin{align}
w(y_{1},y_2) >0,\ y_{1}<y_{2}.\notag
	\end{align}
	where we define, with $y_2$ fixed, 
	\begin{equation*}
w(y,y_2):=\dfrac{\mathbb{Z}^{(\delta\Phi(q))}_{\Phi(q)}(y)}{\mathbb{Z}^{(\delta\Phi(q))}_{\Phi(q)}(y_{2})}-\dfrac{\mathbb{W}^{(\delta\Phi(q))}_{\Phi(q)}(y)}{\mathbb{W}^{(\delta\Phi(q))}_{\Phi(q)}(y_{2})},\ \text{for any}\ y\in\R.
\end{equation*}

Recall the change of measure addressed in Section \ref{section_change_of_measure}. Because,  for $y > 0$, $\{e^{-\delta\Phi(q)(t\wedge\tau_{0,y})}\mathbb{W}^{(\delta\Phi(q))}_{\Phi(q)}(Y_{t\wedge\tau_{0,y}}):t\geq0\}$ and $\{e^{-\delta\Phi(q)(t\wedge\tau_{0,y})}\mathbb{Z}^{(\delta\Phi(q))}_{\Phi(q)}(Y_{t\wedge\tau_{0,y}}):t\geq0\}$ are $\widetilde{\mathbb{P}}^{\Phi(q)}_{x}$-martingales (see Proposition 3 in \cite{Pistorius2004}) where $\tau_{0,y}:=\inf\{t>0:Y_{t}<0\ \text{or}\ Y_{t}>y\}$, it follows that $\{e^{-\delta\Phi(q)(t\wedge\tau_{0,y_{2}})}w(Y_{t\wedge\tau_{0,y_{2}}},y_2) :t\geq0\}$ is a $\widetilde{\mathbb{P}}^{\Phi(q)}_{x}$-martingale. Now, taking $y_{1}<y_{2}$ and using the Optimal Stopping Theorem,
\begin{equation*}
	w(y_{1},y_2)=\widetilde{\E}_{y_{1}}^{\Phi(q)}\Bigr[e^{-q(t\wedge\tau_{0,y_{2}})} w(Y_{t\wedge\tau_{0,y_{2}}},y_2)\Bigl],
\end{equation*}
where $\widetilde{\E}_{y_1}^{\Phi(q)}$ is the expected value with respect to the probability measure $\widetilde{\mathbb{P}}^{\Phi(q)}_{y_1}$. 
Noting that $w$ is bounded (recalling that $Y$ is spectrally negative) taking $t \rightarrow \infty$,  dominated convergence  gives
\begin{equation*}
	w(y_{1},y_2)=\widetilde{\E}_{y_{1}}^{\Phi(q)}\Bigr[e^{-q\tau_{0,y_{2}}} w(Y_{\tau_{0,y_{2}}},y_2) \Bigl].
\end{equation*}
Now we note that
	\begin{itemize}
		\item[(i)] If $Y$ has paths of unbounded variation, then $\mathbb{W}^{(\delta\Phi(q))}_{\Phi(q)}(0)=0$, and hence $w(y,y_2)>0$ for $y\in(-\infty,0]$. On the other hand $\widetilde{\mathbb{P}}^{\Phi(q)}_{x}(Y_{\tau_{0,y_{2}}}\leq 0)>0$.
		\item[(ii)] If $Y$ has paths of bounded variation, then $w(y,y_2)>0$ for $y\in(-\infty,0)$, and $\widetilde{\mathbb{P}}^{\Phi(q)}_{x}(Y_{\tau_{0,y_{2}}}\leq 0)=\widetilde{\mathbb{P}}^{\Phi(q)}_{x}(Y_{\tau_{0,y_{2}}}< 0)>0$.
	\end{itemize}
	These facts imply that
	\begin{equation*}
	w(y_{1},y_2)=\widetilde{\E}_{y_{1}}^{\Phi(q)}\Bigr[e^{-q\tau_{0,y_{2}}}w(Y_{\tau_{0,y_{2}}},y_2) \Bigl]\geq \widetilde{\E}_{y_{1}}^{\Phi(q)}\Bigr[e^{-q\tau_{0,y_{2}}}w(Y_{\tau_{0,y_{2}}},y_2)1_{\{Y_{\tau_{0,y_{2}}}\leq 0\}}\Bigl]>0.
	\end{equation*}
	From here we conclude that $\overline{\Psi}_{x}$ is strictly decreasing on $(0,\infty)$. 

Finally, letting $b\rightarrow0$ in \eqref{d.1}, it is clear that $\displaystyle\lim_{b\rightarrow0}\overline{\Psi}_{x}(b)=e^{-\Phi(q)x}$. On the other hand, using l'H\^opital's rule and \eqref{W_q_limit}, we have that 
	\begin{align*}
		\lim_{b\rightarrow\infty}\frac{\mathbb{Z}^{(\delta\Phi(q))}_{\Phi(q)}(b-x)}{\mathbb{Z}^{(\delta\Phi(q))}_{\Phi(q)}(b)}=\dfrac{1}{e^{(\varphi(q)-\Phi(q))x}}\lim_{b\rightarrow\infty}\dfrac{e^{-\varphi(q)(b-x)}\mathbb{W}^{(q)}(b-x)}{e^{-\varphi(q)b}\mathbb{W}^{(q)}(b)}=\dfrac{1}{e^{(\varphi(q)-\Phi(q))x}}.
	\end{align*}
	Hence $\displaystyle\lim_{b\rightarrow\infty}\overline{\Psi}_{x}(b)=e^{-\varphi(q)x}$. 
\section*{Acknowledgements}

Mauricio Junca was supported by Universidad de los Andes under the Grant Fondo de Apoyo a Profesores Asistentes (FAPA). Harold Moreno-Franco acknowledges financial support from HSE, which was given within the framework of a subsidy granted to the HSE by the Government of the Russian Federation for the implementation of the Global Competitiveness Program. J. L. P\'erez  was  supported  by  CONACYT,  project  no.\ 241195.
K. Yamazaki was supported by MEXT KAKENHI grant no.\  	17K05377.


\begin{thebibliography}{21}
	
	\bibitem{avanzi2017optimal}
	\textsc{Avanzi, B., P{\'e}rez, J.L., Wong, B. and Yamazaki, K.} (2017). \textit{On optimal joint reflective and refractive dividend strategies in spectrally positive {L}{\'e}vy models}. Insurance Math. Econom. \textbf{72} 148--162.
	
	\bibitem{AvPaPi07}
	\textsc{Avram, F., Palmowski, Z. and Pistorius, M.} (2007). \textit{On the optimal dividend problem for a spectrally negative {L}\'evy process}. Ann. Appl. Probab. \textbf{17} 156--180.
	
	\bibitem{BKY}
	\textsc{Bayraktar, E., Kyprianou, A.E. and Yamazaki, K.} (2013). \textit{On optimal dividends in the dual model}. Astin Bull. \textbf{43} 359--372.
	
	\bibitem{Egami_Yamazaki_2010_2}
	\textsc{Egami, M. and Yamazaki, K.} (2014). \textit{Phase-type fitting of scale functions for spectrally negative {L}{\'e}vy processes}. J. Comput. Appl. Math. \textbf{264} 1--22.
	
	\bibitem{Grandits}
	\textsc{Grandits, P.} (2015). \textit{An optimal consumption problem in finite time with a constraint on the ruin probability}. Finance Stoch. \textbf{19} 791--847.
	
	\bibitem{HJ15}
	\textsc{Hern\'andez, C. and Junca, M.} (2015). \textit{Optimal dividend payments under a time of ruin constraint: Exponential claims}. Insurance Math. Econom. \textbf{65} 136--142.
	
	\bibitem{HJM17}
	\textsc{Hern\'andez, C., Junca, M. and Moreno-Franco, H.} (2018). \textit{A time of ruin constrained optimal dividend problem for spectrally one-sided {L}\'evy processes}. Insurance Math. Econom. \textbf{79} 57--68.
	
	\bibitem{KKR}
	\textsc{Kuznetsov, A.,  Kyprianou, A.E. and Rivero, V.} (2012). \textit{The theory of scale functions for spectrally negative {L}\'evy processes}. Lecture Notes in Math. \textbf{2061} 97--186. L\'evy Matters II. Springer, Heidelberg.
	
	\bibitem{kyprianou2014}
	\textsc{Kyprianou, A.E.} (2014). \textit{Fluctuations of {L}\'evy Processes with Applications}. Universitext. Springer, Heidelberg.
	
	\bibitem{KyLo}
	\textsc{Kyprianou, A.E. and Loeffen, R.} (2010). \textit{Refracted {L}\'evy processes}. Ann. Inst. Henri Poincar\'e Probab. Stat. \textbf{46} 24--44.
	
	\bibitem{KLP}
	\textsc{Kyprianou, A.E., Loeffen, R. and P\'erez, J.L.} (2012). \textit{Optimal control with absolutely continuous strategies for spectrally negative {L}\'evy processes}. J. Appl. Probab.  \textbf{49} 150--166.
	
	\bibitem{KyprianouRS10}
	\textsc{Kyprianou, A.E., Rivero, V.  and Song, R.} (2010). \textit{Smoothness and convexity of scale functions with applications to de {F}inetti's control problem}. J. Theoret. Probab. \textbf{23} 547--564.
	
	\bibitem{Loeffen08}
	\textsc{Loeffen, R.} (2008). \textit{On optimality of the barrier strategy in de {F}inetti's dividend problem for spectrally negative {L}\'evy processes}. Ann. Appl. Probab. \textbf{18} 1669--1680.
	
	\bibitem{Loeffen09}
	\textsc{Loeffen, R.} (2009). \textit{An optimal dividends problem with a terminal value for spectrally negative L\'evy processes with a completely monotone jump density}. J. Appl. Probab. \textbf{46} 85--98. 
	
	\bibitem{NPYY}
	\textsc{Noba, K., P\'erez, J.L., Yamazaki, K. and Yano, K.} (2018). \textit{On optimal periodic strategies for {L}\'evy risk processes}. Ann. Appl. Probab. Insurance Math. Econom. \textbf{80} 29--44.
	
	\bibitem{Jostein03}
	\textsc{Paulsen, J.} (2003). \textit{Optimal dividend payouts for diffusions with solvency constraints}. Finance Stoch. \textbf{7} 457--473.
	
	\bibitem{PYX}
	\textsc{P\'erez, J.L., Yamazaki, K. and Xiang, Y.} (2017). \textit{The bail-out optimal dividend problem under the absolutely continuous condition}.  Available at https://arxiv.org/abs/1709.06348.
	
	\bibitem{Pistorius2004}
	\textsc{Pistorius, M. R.} (2004). \textit{On Exit and Ergodicity of the Spectrally One-Sided {L}\'evy Process Reflected at Its Infimum}. J. Theoret. Probab. \textbf{17} 183--220.
	
	\bibitem{schmidli2002}
	\textsc{Schmidli, H.} (2002). \textit{On minimizing the ruin probability by investment and reinsurance}. Ann. Appl. Probab. \textbf{12} 890--907.
	
	\bibitem{ThonAlbr}
	\textsc{Thonhauser, S. and Albrecher, H.} (2007). \textit{Dividend maximization under consideration of the time value of ruin}. Insurance Math. Econom. \textbf{41} 163--184.
	
	\bibitem{Yin}
	\textsc{Yin, C.,  Wen, Y. and Zhao, Y.} (2014). \textit{On the optimal dividend problem for a spectrally positive {L}\'evy processes}. Astin Bull. \textbf{44} 635--651.
	
	
	
	
	
	
	
\end{thebibliography}
\end{document}